\numberwithin{equation}{section}
\newtheorem{theorem}{Theorem}\numberwithin{theorem}{section}
\newtheorem{prop}[theorem]{Proposition}
\newtheorem{lemma}[theorem]{Lemma}
\newtheorem{example}[theorem]{Example}
\newtheorem{rema}[theorem]{Remark}
\newcommand{\p}{\mathbb{P}}
\newcommand{\E}{\mathbb{E}}
\newcommand{\Q}{\mathbb{Q}}
\newcommand{\MM}{\mathbb{M}}
\newcommand{\supp}{{\rm supp}}
\newcommand{\Var}{{\rm Var}}
\titleformat*{\section}{\normalfont\large\bfseries}
\titleformat*{\subsection}{\normalfont\bfseries}
\date{\vspace{-0.95cm}}
\begin{document}
	\title{\bfseries Geometric functionals of polyconvex excursion sets of Poisson shot noise processes}
	
	\author{%
		Vanessa Trapp\footnotemark[1]%
	}
	
	\date{}
	\renewcommand{\thefootnote}{\fnsymbol{footnote}}

	\footnotetext[1]{%
		Hamburg University
		of Technology, Germany. Email: vanessa.trapp@tuhh.de
	}

	\maketitle
	\begin{abstract}\noindent
		Excursion sets of Poisson shot noise processes are a prominent class of random sets. We consider a specific class of Poisson shot noise processes whose excursion sets within compact convex observation windows are almost surely polyconvex. This class contains, for example, the Boolean model. In this paper, we analyse the behaviour of geometric functionals such as the intrinsic volumes of these excursion sets for growing observation windows. In particular, we study the asymptotics of the expectation and the variance, derive a lower variance bound and show a central limit theorem.
		
		\smallskip\noindent
		\textbf{Keywords.} Excursion sets, Poisson shot noise processes, geometric functionals, intrinsic volumes, central limit theorem, lower variance bound, Boolean model
		
		\smallskip\noindent
		\textbf{MSC 2010.} Primary 60D05; Secondary 60F05
	\end{abstract}
	\section{Introduction and model}
	\subsection{General introduction}
	Random fields and their excursion sets are an important topic of probability theory with a wide range of applications. They are used in various fields for modelling random signals, with excursion sets corresponding to the regions where these signals interfere and exceed some given threshold.
	In 1944, for example, random fields were used in \cite{R44} to model random noise in electronic devices and analyse shot effects, which occur if a lot of disturbances overlap. Beyond this, random fields have a variety of applications in different areas such as wireless communication networks (see e.g. \cite{BB01,BB09}), insurance mathematics (see for instance \cite{B00,KM95,T04}) and medicine (see e.g.\ \cite{TW07,WMNVFE96}), to name just a few.
	For a general introduction to random fields we refer the reader to \cite{AT07,AW09}.
	
	While the best studied underlying fields in the literature are Gaussian random fields, we focus in this paper on another important choice of underlying fields, namely Poisson shot noise processes. 
	%Note that we use the terms field and process interchangeably here. 
	Let $(\MM,\mathcal{F}_\MM,\Q)$ be a mark space with $\sigma$-field $\mathcal{F}_\MM$ and probability measure $\Q$. For $d\in\mathbb{N}$ we consider a marked Poisson process $\eta$ on $\hat{\mathbb{R}}^d=\mathbb{R}^d\times\mathbb{M}$ with intensity measure $\lambda=\gamma\lambda_d\otimes\mathbb{Q}$ for some $\gamma>0$, where $\lambda_d$ denotes the $d$-dimensional Lebesgue measure. For a family of non-negative measurable kernel functions $(g_m)_{m\in\MM}$ with $g_m:\mathbb{R}^d\to [0,\infty)$, a Poisson shot noise process $(f_\eta(y))_{y\in\mathbb{R}^d}$ is defined by 
	\begin{align*}
		f_\eta(y)=\sum_{(x,m)\in\eta}g_m(y-x)
	\end{align*}
	for $y\in \mathbb{R}^d$. For a fixed $u>0$, its excursion set is given by
	\begin{align*}
		Z_u=\{y\in \mathbb{R}^d:f_\eta(y)\geq u\}.
	\end{align*}
	We usually consider excursion sets within compact convex observation windows for fixed $u>0$ and want to study their behaviour for a growing sequence of observation windows. This means, we consider an increasing sequence of windows $(W_r)_{r\geq 1}$ with $W_r=rW$ for $r\geq 1$ and some compact convex set $W$ with $\lambda_d(W)>0$ and are interested in the asymptotic behaviour of $\varphi(Z_u\cap W_r)$, where $\varphi$ is a functional, which provides geometric information about $Z_u\cap W_r$.
	
	In the literature there already exist a few results for specific geometric functionals of excursion sets of different random fields. For Gaussian random fields, \cite{EL16} establishes a central limit theorem for the Euler characteristic of excursion sets, while \cite{BBDE19,BE17} use the Euler characteristic to construct tests for Gaussianity. Central limit theorems for Lipschitz-Killing curvatures of Gaussian excursion sets are proven in \cite{KV18,M17}.
	More general random fields are considered for instance in \cite{AST10, BD20, BBDE19, BST12, S13}. Among other things, the Euler characteristic, surface area and volume of excursion sets of two-dimensional stationary isotropic random fields are analysed in \cite{BBDE19}.
	In \cite{AST10, BD20} the asymptotic expectation of functionals like Lipschitz-Killing curvatures, the level perimeter or level total curvature integrals are calculated for different fields such as stable fields or  two-dimensional smooth stationary fields. An overview on asymptotic results for the volume of stationary random fields is given in \cite{S13}. In \cite{BST12} the volume of excursion sets of quasi-associated random fields are considered and central limit theorems are shown, in particular for specific Gaussian fields and Poisson shot noise processes.
	Further research on the asymptotic behaviour of the volume of excursion sets of Poisson shot noise processes is for example done in \cite{L19,LPY22,ST22}, which show limit theorems or variance asymptotics under different conditions on the kernel functions. Moreover, the perimeter of Poisson shot noise processes is considered in \cite{BD16,L19,LPY22}. While \cite{BD16} analyses its expectation, the works \cite{L19,LPY22} show central limit theorems for the perimeter. Additionally, \cite{L19} shows asymptotic results for other functionals like the Euler characteristic.

	In this paper we want to study general geometric functionals of excursion sets of Poisson shot noise processes.
	More precisely, we assume that the functional $\varphi$ is measurable, translation invariant, additive and locally bounded. The most prominent example of geometric functionals are the intrinsic volumes $V_i$ for $i\in\{0,\dots,d\}$, where for a $d$-dimensional compact convex set $K\subseteq\mathbb{R}^d$, $V_d(K)$ is its volume, $V_{d-1}(K)$ is half the surface area and $V_0(K)$ is the Euler characteristic of $K$ (see e.g.\ \cite[Chapter 4]{S14}). In addition to the intrinsic volumes there are also other types of geometric functionals like the more general example of mixed volumes $V(K[j],K_1,\dots,K_{d-j})$, where $K[j]$ means that $K$ is repeated $j$ times, $j\in\{1,\dots,d\}$ and $K_1,\dots,K_{d-j}$ are fixed compact convex sets (see \cite[Section 5.1]{S14}), or total measures from translative integral geometry (see \cite[Section 6.4]{SW08}). For further details and more examples of geometric functionals see also \cite[p. 79]{HLS16} and the references therein.
	
	Unlike functionals like the volume or the Euler characteristic, general geometric functionals are only well-defined for polyconvex sets. This requires the introduction of a model, i.e.\ conditions on the family of kernel functions, for which the corresponding excursion sets in the observation window are almost surely polyconvex. In detail, our model is introduced in Section \ref{section1.2}.
	A special case of our model is the Boolean model. 
	The underlying proof strategy for Poisson shot noise processes is an extended version of the one for the Boolean model (from e.g.\ \cite{HLS16}). However, the case of Poisson shot noise processes is substantially more intricate and its proofs rely heavily on novel ideas since, in contrast to the Boolean model, excursion sets of Poisson shot noise processes are in general not standard random sets (see also Section \ref{sec:boolean_model} and Example \ref{example:notstandard}).
	
	In the remaining part of this section we introduce our a model and compare it with the Boolean model in more detail. Section $2$ is devoted to geometric and stochastic preliminaries. In Section $3$ the main results, i.e.\ the results for the asymptotic behaviour of expectation and variance and qualitative and quantitative central limit theorems, are presented. The proofs of these results can be found in Section $4$. Finally, in the appendix we prove a generalised version of the reverse Poincar\'e inequality from \cite[Theorem 1.1]{ST22}, which we use to show a lower variance bound.
	
	\subsection{A model with polyconvex excursion sets}
	\label{section1.2}
	In this section we introduce conditions for the kernel functions which guarantee that excursion sets in the compact convex observation window are almost surely polyconvex, i.e.\ that $Z_u\cap W_r$ can almost surely be written as a union of finitely many compact convex sets. Then, $\varphi(Z_u\cap W_r)$ is well-defined.
	
	By $\mathcal{K}^d$ we denote the set of compact convex sets in $\mathbb{R}^d$. The space of closed sets $\mathcal{F}^d$ in $\mathbb{R}^d$ is equipped in this paper with the Fell topology whose $\sigma$-algebra $\mathcal{B}(\mathcal{F}^d)$ can be generated by $\{\{F\in\mathcal{F}^d: F\cap U\neq\emptyset\}: U\in \mathcal{U}^d\}$, where $\mathcal{U}^d$ denotes the set of open sets in $\mathbb{R}^d$ (see e.g.\ \cite[Lemma 2.1.1]{SW08}). By $\widetilde{L}^1(\mathbb{R}^d)$ we denote the space of integrable functions on $\mathbb{R}^d$ with compact convex closed support $K$, whose restriction to $K$ is continuous. This space will be equipped with the Borel $\sigma$-algebra $\mathcal{L}_1$, which is induced by the $L^1$-norm.
	We assume in this paper that $(g_m)_{m\in\MM}$ is a family of measurable functions $\widetilde{L}^1(\mathbb{R}^d)\ni g_m:\mathbb{R}^d\to[0,\infty)$, which are concave on their non-empty compact convex closed supports $K_m\in\mathcal{K}^d$ with $\lambda_d(K_m)>0$ for $m\in\MM$.  Note that $K_m=\overline{\supp(g_m)}=\overline{\{x\in\mathbb{R}^d:g_m(x)>0\}},$ where $\bar{A}$ denotes the closure of a set $A\subset\mathbb{R}^d$. 
	The continuity at inner points of $K_m$ also follows from the concavity on $K_m$ so that the continuity condition describes the behaviour of the function at the boundary of $K_m$.
	Moreover, we assume that $m\mapsto g_m$ is $\mathcal{F}_{\MM}$-$\mathcal{L}_1$-measurable. Note that $m\mapsto K_m$ is then automatically $\mathcal{F}_{\MM}$-$\mathcal{B}(\mathcal{F}^d)$-measurable since for any $U\in\mathcal{U}^d$, $\{f\in \widetilde{L}^1(\mathbb{R}^d):\overline{\supp(f)}\cap U\neq\emptyset\}$ is an open set and therefore contained in $\mathcal{L}_1$. Hence, $g_m\mapsto K_m$ is $\mathcal{L}_1$-$\mathcal{B}(\mathcal{F}^d)$-measurable. Together with the assumption that $m\mapsto g_m$ is $\mathcal{F}_{\MM}$-$\mathcal{L}_1$-measurable, this provides that $m\mapsto K_m$ is $\mathcal{F}_{\MM}$-$\mathcal{B}(\mathcal{F}^d)$-measurable.
	Additionally, we assume that 
	\begin{align}\label{Assumption first moment}
		\int_\MM V_i(K_{m})\;\mathbb{Q}(\mathrm{d}m)<\infty
	\end{align}
	for $i\in\{1,\dots,d\}$.
	In the course of this paper we also assume higher order moment conditions of the form
	\begin{align}\label{Assumption_fourth_moment}
		\int_\MM V_i(K_m)^k\;\Q(\mathrm{d}m)<\infty
	\end{align}
	of up to order $k=4$ for $i\in\{1,\dots,d\}$ to derive variance asymptotics and central limit theorems.
	
	We often represent a point $\hat{x}\in\eta$ as $\hat{x}=(x,m)\in\mathbb{R}^d\times\MM$ and abbreviate 
	\begin{align*}
		K(\hat{x})=K_m+x=\{y\in\mathbb{R}^d:y-x\in K_{m}\}.
	\end{align*}
	Let  $K_{m}+W_r=\{x+y:x\in K_{m},y\in W_r\}$ denote the Minkowski sum of the sets $K_{m}$ and $W_r$ and define 
	\begin{align*}
		S_{W_r}=\{\hat{x}=(x,m)\in\mathbb{R}^d\times\MM: K(\hat{x})\cap W_r\neq \emptyset\}.
	\end{align*}
	\noindent Since 
	\begin{align*}
		V_d(\{x\in\mathbb{R}^d:K(\hat{x})\cap W_r\neq\emptyset\})=V_d(\{x\in\mathbb{R}^d:(K_{m}+x)\cap W_r\neq\emptyset\})
		= V_d(-K_{m}+W_r)
	\end{align*}
	and we can bound $V_d(-K_{m}+W_r)$ with Steiner's formula (see also \eqref{eq:steiner}) by a linear combination of the intrinsic volumes of $K_{m}$, where the coefficients depend on $W_r$, the moment assumption in \eqref{Assumption first moment} provides that $\E[\eta(S_{W_r})]\leq\gamma\int_\MM V_d(-K_{m}+W_r)\;\mathbb{Q}(\mathrm{d}m)<\infty$ for any $r\geq1$ and thus, $\eta(S_{W_r})<\infty$ almost surely. 
	Let now $n=\eta(S_{W_r})$ and denote the points of the Poisson process in $S_{W_r}$ by $\hat{x}_1=(x_1,m_1),\dots,
	\hat{x}_n=(x_n,m_n)$. For $j\in\{1,\dots,n\}$ we often use the short notation
	\begin{align*}
		\hat{K}_j=K(\hat{x}_j)=K_{m_j}+x_j.
	\end{align*}
	For $\emptyset\neq I\subseteq\{1,\dots,n\}$ we define
	\begin{align}\label{eq_XI}
		X_I=\Big\{y\in W_r:\sum_{j\in I}g_{m_j}(y-x_j)\geq u, y\in\hat{K}_j \text{ for all } j\in I\Big\}.
	\end{align}
	We use these sets to show that $Z_u\cap W_r$ is almost surely polyconvex.
	\begin{prop}
		\label{lemma E polyconvex}
		Let $u>0$ be fixed, $n=\eta(S_{W_r})<\infty$ a.s.\ and $X_I$ be defined as in \eqref{eq_XI} for $\emptyset\neq I\subseteq\{1,\dots,n\}$. Then, $X_I$ is a compact convex set for all $\emptyset\neq I\subseteq\{1,\dots,n\}$ and
		\begin{align*}
			Z_u\cap W_r=\bigcup\limits_{\emptyset\neq I\subseteq\{1,\dots,n\}}X_I
		\end{align*}
		for all $r\geq1$.
	\end{prop}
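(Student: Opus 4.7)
The plan has two parts corresponding to the two assertions: each $X_I$ is a compact convex set, and the excursion set inside $W_r$ decomposes as the displayed union.

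For the geometric part, I would first observe that $\bigcap_{j\in I}\hat{K}_j$ is compact and convex, being a finite intersection of the translates $\hat{K}_j=K_{m_j}+x_j$ of the compact convex supports $K_{m_j}$. On this intersection each map $y\mapsto g_{m_j}(y-x_j)$ is concave, since by assumption $g_{m_j}$ is concave on $K_{m_j}$; hence $h_I(y):=\sum_{j\in I}g_{m_j}(y-x_j)$ is concave on $\bigcap_{j\in I}\hat{K}_j$. The super-level set $\{h_I\geq u\}$ is therefore convex, and intersecting with the compact convex window $W_r$ yields convexity of $X_I$. For closedness (hence compactness), I would use that each $g_{m_j}$ is continuous on its compact convex support, so $h_I$ is continuous on $\bigcap_{j\in I}\hat{K}_j$; the super-level set is thus relatively closed there, and since $\bigcap_{j\in I}\hat{K}_j$ is itself closed in $\mathbb{R}^d$, so is $X_I$. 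Boundedness follows from $X_I\subseteq W_r$.

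For the set equality, the key observation I would record first is that for $y\in W_r$ only points $(x_j,m_j)\in\eta\cap S_{W_r}$ contribute to $f_\eta(y)$: if $(x,m)\notin S_{W_r}$, then $K_m+x$ does not meet $W_r$, so in particular $y-x\notin K_m$ and $g_m(y-x)=0$ because $g_m$ vanishes outside $K_m$. Hence for $y\in W_r$,
\begin{align*}
f_\eta(y)=\sum_{j=1}^n g_{m_j}(y-x_j).
\end{align*}
The inclusion $\bigcup_I X_I\subseteq Z_u\cap W_r$ is then immediate: if $y\in X_I$, the partial sum over $I$ already reaches $u$, and non-negativity of the kernels gives $f_\eta(y)\geq u$. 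For the reverse inclusion, given $y\in Z_u\cap W_r$, I would define $I(y):=\{j\in\{1,\dots,n\}:y\in\hat{K}_j\}$. Outside $\hat{K}_j$ the function $g_{m_j}(\,\cdot\,-x_j)$ vanishes, so the above display reduces to $f_\eta(y)=\sum_{j\in I(y)}g_{m_j}(y-x_j)\geq u$; this also forces $I(y)\neq\emptyset$, since otherwise the sum would be zero and smaller than $u>0$. Therefore $y\in X_{I(y)}$.

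There is no genuine obstacle; the only point requiring care is the identification of which points of $\eta$ can contribute to $f_\eta(y)$ for $y\in W_r$, which is why the bookkeeping is done with $S_{W_r}$ rather than the whole process. The concavity-plus-continuity assumptions on $(g_m)_{m\in\MM}$ enter in exactly one place (convexity and closedness of the super-level set of $h_I$), and the almost sure finiteness $n=\eta(S_{W_r})<\infty$ has already been established before the proposition, so the union over $I$ is almost surely a finite union, as required for polyconvexity.
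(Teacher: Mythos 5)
Your proposal is correct and follows essentially the same route as the paper: convexity of $X_I$ via concavity of $\sum_{j\in I}g_{m_j}(\cdot-x_j)$ on $\bigcap_{j\in I}\hat K_j$, compactness via continuity of the kernels on their supports together with $X_I\subseteq W_r$, and the set equality by noting that for $y\in Z_u\cap W_r$ the index set $I(y)=\{j:y\in\hat K_j\}$ is nonempty and realises $y\in X_{I(y)}$. Your write-up merely spells out in more detail the steps the paper states tersely (in particular, that only points of $\eta\cap S_{W_r}$ contribute to $f_\eta(y)$ for $y\in W_r$), so there is nothing substantively different to compare.
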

	\begin{proof}
		In the following we show that the set $X_I$ for $\emptyset\neq I\subseteq\{1,\dots,n\}$ is a compact convex set and that we can write $Z_u\cap W_r$ as a union of these sets.		
		For $y_1,y_2\in X_I$ and $\alpha\in(0,1)$ we have $\alpha y_1+(1-\alpha)y_2\in\hat{K}_j$ since $\hat{K}_j$ is convex for all $j\in I$. Due to the concavity of $g_{m_j}$ on $K_{m_j}$ it holds
		\begin{align*}
			\sum_{j\in I}g_{m_j}(\alpha y_1+(1-\alpha)y_2-x_j)\geq \sum_{j\in I}\alpha g_{m_j}(y_1-x_j)+(1-\alpha)g_{m_j}(y_2-x_j)\geq u.
		\end{align*}
		Hence, $X_I$ is convex for $\emptyset\neq I\subseteq\{1,\dots,n\}$. Since $g_{m_j}\in\widetilde{L}^1(\mathbb{R}^d)$, the restriction of $g_{m_j}$ to $K_{m_j}$ is continuous for all $j\in \{1,\dots,n\}$. Together with the fact that $W_r$ is a compact convex set and $X_I\subseteq W_r$ for all $r\geq1$, this ensures that $X_I$ is compact for $\emptyset\neq I\subseteq\{1,\dots,n\}$.
		
		Moreover, $\bigcup_{\emptyset\neq I\subseteq\{1,\dots,n\}}X_I\subseteq Z_u\cap W_r$ since $X_I\subseteq Z_u\cap W_r$ for all $I\subseteq\{1,\dots,n\}$. As there is also $\emptyset\neq I\subseteq\{1,\dots,n\}$ with $y\in X_I$ for all $y\in Z_u\cap W_r$, it follows
		\begin{align*}
			Z_u\cap W_r=\bigcup\limits_{\emptyset\neq I\subseteq\{1,\dots,n\}}X_I,
		\end{align*}
		which completes the proof.
	\end{proof}
	Because of Proposition \ref{lemma E polyconvex} the geometric functional of $Z_u\cap W_r$ is defined via the inclusion exclusion principle (see also Equation \eqref{eq:intrinsic volume polyconvex set}). We have for $n=\eta(S_{W_r})$,
	\begin{align*}
		\varphi(Z_u\cap W_r)=\sum_{J\subseteq\mathcal{P}(\{1,\dots,n\})\backslash\emptyset:\lvert J\rvert\geq 1 }(-1)^{\lvert J\rvert+1}\varphi(\cap_{I\in J}X_{I}),
	\end{align*}
	where $\mathcal{P}(\{1,\dots,n\})$ denotes the power set of $\{1,\dots,n\}$.
	\subsection{Comparison to the Boolean model}
	\label{sec:boolean_model}
	An important example of random sets are Boolean models, which correspond to considering the union of the supports $Z=\bigcup_{\hat{x}\in\eta}K(\hat{x})$. The behaviour of $\varphi(Z\cap W_r)$ for the Boolean model $Z$ was studied, for example, in \cite{HLS16,LP18,SW08}. It can be shown that the expectation grows with order $V_d(W_r)$ (see e.g.\ \cite[Chapter 9.1]{SW08}). Variance asymptotics of order $V_d(W_r)$ and corresponding quantitative central limit theorems for the Boolean model were derived in \cite[Chapter 22.2]{LP18} or \cite[Sections 3 and 9]{HLS16}. One way to generalise the Boolean model is to consider the union of cylinders of Poisson cylinder processes, which have long-range dependencies. Variance asymptotics and central limit theorems for the case of Poisson cylinder processes were shown in \cite[Theorem 3.5 and Theorem 3.10]{BST22}.

	In this paper we want to study Poisson shot noise processes, which also generalise the Boolean model. In fact, the Boolean model arises as a special case of our model if $g_m(x)\geq u$ for all $x\in K_m$ and $m\in\MM$.  Since we do not consider the union of the supports but the union of the sets $X_I$ for $\emptyset\neq I\subseteq\{1,\dots, \eta(S_{W_r})\}$, in general our random sets and the Boolean model differ mainly in two aspects in which Boolean models and unions of the cylinders of Poisson cylinder processes do not deviate. First, $X_I$ depends on $\lvert I \rvert$ points while $K(\hat{x})$ only depends on the point $\hat{x}$. This also implies that two sets $X_I, X_J$ can depend on the same point for $I\neq J$ if $I\cap J\neq \emptyset$, while $K(\hat{x})$ and $K(\hat{y})$ cannot depend on the same point if $\hat{x}\neq\hat{y}$. 
	The second main difference arises from the possible number of convex sets needed to represent the excursion set as a union of these convex sets. Let $N(A)$ denote the smallest number needed to write a polyconvex set $A$ as a union of $N(A)$ compact convex sets. Then, for the Boolean model $Z$, $N(Z\cap C^d)\leq \eta(S_{C^d})$ for $C^d=[0,1]^d$. For our model we can in general need up to $2^{\eta(S_{C^d})}-1$ sets (cf. Proposition \ref{lemma E polyconvex}). This means that $\E[2^{N(Z_u\cap C^d)}]$ is not necessarily finite (see also Example \ref{example:notstandard}) and therefore, $Z_u$ is in contrast to $Z$ in general not a standard random set as defined in \cite[Definition 9.2.1]{SW08}.

	\section{Preliminaries}
	
	\subsection{Geometric functionals and intrinsic volumes}
	\label{section:intrinsic volumes}
	Let $\mathcal{R}^d$ denote the convex ring, which contains all subsets of $\mathbb{R}^d$ that can be written as a finite union of compact convex sets. 
	In this paper we consider geometric functionals, i.e.\ $\mathcal{B}(\mathcal{F}^d)$-$\mathcal{B}(\mathbb{R})$-measurable functions $\varphi:\mathcal{R}^d\to\mathbb{R}$, which are translation invariant, additive and locally bounded. Translation invariance means that $\varphi(A+x)=\varphi(A)$ for $A\in\mathcal{R}^d$ and $x\in\mathbb{R}^d$, where $A+x=\{y+x, y\in A\}$. Additivity is used to describe that $\varphi(\emptyset)=0$ and $\varphi(A_1\cup A_2)=\varphi(A_1)+\varphi(A_2)-\varphi(A_1\cap A_2)$ for $A_1,A_2\in\mathcal{R}^d$. Local boundedness means that $\lvert\varphi(A)\rvert$ is uniformly bounded, i.e.
	\begin{align}\label{eq:locallybounded}
		\lvert\varphi(A)\rvert\leq M_\varphi
	\end{align}
	for all $\mathcal{K}^d\ni A\subseteq [0,1]^d$, where $M_\varphi$ only depends on $\varphi$. Together with the translation invariance, this means that this inequality also holds for each translation of $[0,1]^d$.
	For $A=\bigcup_{j=1}^nK_j\in\mathcal{R}^d$ for $K_j\in\mathcal{K}^d$, $j\in\{1,\dots,n\}$ and $n\in\mathbb{N}$, additivity opens up the possibility of tracing the definition of $\varphi(A)$ back to $\varphi(K_j)$, $j\in\{1,\dots,n\}$. The inclusion exclusion formula  provides
	\begin{align}\label{eq:intrinsic volume polyconvex set}
		\varphi(A)=\sum_{\emptyset\neq J\subseteq\{1,\dots,n\}}(-1)^{\lvert J\rvert+1}\varphi\Big(\bigcap_{j\in J}K_j\Big).
	\end{align}
	In harmony with this we define $\varphi(A\cap C_0^d)$ for $A\in\mathcal{R}^d$ and the half open unit cube $C_0^d=[0,1)^d$ as 
	\begin{align}
		\label{eq:c0^d}
		\varphi(A\cap C_0^d)=\varphi(A\cap C^d)-\varphi(A\cap \partial^+C^d),
	\end{align}
	where $C^d=[0,1]^d$ and $\partial^+C^d=C^d\backslash C_0^d$ (see also \cite[p. 394]{SW08}).
	
	The parallel body $K^r$ of a compact convex body $K\in\mathcal{K}^d$ is defined by
	\begin{align*}
		K^r=K+B^d(\mathbf{0},r)=\{a+b:a\in K,b\in B^d(\mathbf{0},r)\}=\{x\in\mathbb{R}^d:\mathrm{d}(K,x)\leq r\}
	\end{align*}
	for $r\geq 0$, where $B^d(\mathbf{0},r)$ denotes the $d$-dimensional closed ball around the origin $\mathbf{0}=(0,\dots,0)\in\mathbb{R}^d$ of radius $r$. With this definition we can introduce intrinsic volumes, which are important examples of geometric functionals, as they correspond to geometric properties like volume, surface area or Euler characteristic.
	By the Steiner formula we know that the volume of a parallel body is a polynomial of degree at most $d$ in the radius of the ball. It is given by
	\begin{align}\label{eq:steiner}
		V_d(K^r)=\sum_{k=0}^{d}r^{d-k}\kappa_{d-k}V_k(K)\leq C_1\sum_{k=0}^{d}V_k(K),
	\end{align}
	where $\kappa_{d-k}$ denotes the volume of the $(d-k)$-dimensional unit ball (see for example \cite[p. 2]{SW08}) and $C_1>0$ is a suitable constant depending on $d$ and $r$.
	One of the basic properties of intrinsic volumes, which we use throughout this paper, is their monotonicity and positivity on $\mathcal{K}^d$, i.e.\ for $K_1,K_2\in\mathcal{K}^d$ with $K_1\subseteq K_2$ it holds $V_i(K_1)\leq V_i(K_2)$ and $V_i(K_1)\geq 0$ (see for instance \cite[Remark 3.22]{HW20}). Moreover, we use their homogeneity $V_i(rK)=r^iV_i(K)$ for all $r\geq0$ and $K\in \mathcal{K}^d$ (see e.g.\ \cite[Remark 3.21]{HW20}). 
	For the volume $V_d$ we know from e.g.\ \cite[Theorem 5.2.1]{SW08} the translative integral formula
	\begin{align}\label{eq:int_V_d A_1+xcapA_2}
		\int V_d((K_1+x)\cap K_2)\;\mathrm{d}x=V_d(K_1)V_d(K_2)
	\end{align}
	for $K_1,K_2\in\mathcal{K}^d$, where $\mathrm{d}x$ stands for the integration with respect to the $d$-dimensional Lebesgue measure.
	The next lemma summarises three further properties of intrinsic volumes from \cite[Lemma 3.6 and Lemma 3.7]{HLS16} and \cite[p. 613]{SW08}, which we need for the proof of our main theorems.
	\begin{lemma}
		Let $K_1,K_2\in\mathcal{K}^d$. Then it holds for suitable constants $C_2,C_3>0$, which might depend on $d$,
		\begin{align}\label{eq:lambda_A_1+xcappartialA_2}
			V_d(\{x\in\mathbb{R}^d:(K_1+x)\cap\partial K_2\neq\emptyset\})\leq C_2\sum_{k=0}^{d-1}V_k(K_2)\sum_{\ell=0}^dV_{\ell}(K_1)
		\end{align}	
		and for $i\in\{0,\dots,d\}$,
		\begin{align}\label{eq:frationV_iV_d}
			\frac{V_i(K_1)}{V_d(K_1)}\leq C_3\frac{1}{r(K_1)^{d-i}}
		\end{align}
		for $r(K_1)>0$, where $r(K_1)$ denotes the inradius of $K_1$. Moreover, for $i,j\in\{0,\dots,d\}$ with  $i<j$ there exist constants $C(i,j)>0$, which may depend on $i,j$ and $d$, such that
		\begin{align}\label{eq:ViVj}
			V_j(K_1)\leq C(i,j) V_i(K_1)^{j/i}.
		\end{align}
	\end{lemma}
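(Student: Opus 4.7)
These three inequalities are classical in convex geometry; I would reproduce the standard arguments as follows.

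For \eqref{eq:lambda_A_1+xcappartialA_2}, I observe that, outside a Lebesgue null set of tangencies, $(K_1+x)\cap\partial K_2\neq\emptyset$ is equivalent to $K_1+x$ meeting both $K_2$ and its complement, which is in turn equivalent to $x$ lying in the ``shell'' $(K_2+(-K_1))\setminus(K_2\ominus K_1)$, where $K_2\ominus K_1=\{x\in\mathbb{R}^d:K_1+x\subseteq K_2\}$ is the Minkowski erosion. Its Lebesgue measure equals $V_d(K_2+(-K_1))-V_d(K_2\ominus K_1)$. Expanding $V_d(K_2+(-K_1))$ via the polynomial formula in mixed volumes, and combining with a Steiner-type lower bound on the erosion, the top-order term $V_d(K_2)$ cancels, leaving only cross terms of the form $V(K_2[k],-K_1[d-k])$ with $k\le d-1$. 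Each such mixed volume is controlled by $c\,V_k(K_2)V_{d-k}(K_1)$, either via Alexandrov--Fenchel-type estimates or, more crudely, by inscribing $K_1$ in a Euclidean ball and applying \eqref{eq:steiner}; summing these yields the stated bound.

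For \eqref{eq:frationV_iV_d}, I would exploit scale invariance. Writing $r=r(K_1)$ and setting $\tilde K=r^{-1}K_1$, the rescaled body has inradius $1$, and by the homogeneity $V_k(rK)=r^kV_k(K)$ the inequality reduces to a uniform upper bound on $V_i(\tilde K)/V_d(\tilde K)$ over all convex bodies $\tilde K$ with inradius at least $1$. Such bodies contain a unit ball, so $V_d(\tilde K)\ge\kappa_d$; the control of $V_i(\tilde K)$ by $V_d(\tilde K)$ then follows from standard convex-geometric estimates, since a large inscribed ball forces a definite volume contribution along every direction, which in turn prevents $V_i$ from being much larger than $V_d$.

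For \eqref{eq:ViVj}, the ratio $V_j(K_1)/V_i(K_1)^{j/i}$ is itself scale-invariant, so after normalising $V_i(K_1)=1$ it suffices to bound $V_j$ uniformly on this class. This follows from Alexandrov--Fenchel-type inequalities between intrinsic volumes, yielding $V_j^{1/j}\le C(i,j,d)\,V_i^{1/i}$ for $0<i<j\le d$; alternatively, one may deduce it by combining \eqref{eq:frationV_iV_d} with a suitable lower bound on $V_d$ in terms of $V_i$ and the inradius. The most delicate step overall is the first inequality: the Minkowski sum expands cleanly, but the erosion is less tractable, so one must execute the cancellation of the $V_d(K_2)$ term carefully and then organise the surviving cross terms into the precise factored form $\sum_{k=0}^{d-1}V_k(K_2)\sum_{\ell=0}^dV_\ell(K_1)$ stated in the lemma.
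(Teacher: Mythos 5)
The paper itself offers no proof of this lemma: it is imported from \cite[Lemmas 3.6 and 3.7]{HLS16} and \cite[p. 613]{SW08}, so your argument is necessarily ``different''. Your sketches for \eqref{eq:frationV_iV_d} and \eqref{eq:ViVj} are essentially correct and match the standard proofs: for \eqref{eq:frationV_iV_d} the homogeneity reduction to inradius $1$ is right, and the remaining uniform bound is cleanest via $K+B^d(\mathbf{0},r(K))\subseteq K+K-z$ for $B^d(z,r(K))\subseteq K$, together with Steiner's formula and $V_d(K+K)=2^dV_d(K)$, rather than the somewhat vague ``definite volume contribution along every direction''; for \eqref{eq:ViVj} the scale-invariant normalisation plus the Alexandrov--Fenchel consequences $V_j^{1/j}\le c\,V_i^{1/i}$ for $i<j$ is exactly the classical route (note the case $i=0$ is degenerate as stated, since $V_0\equiv 1$; only $i\ge 1$ is used in the paper).

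The genuine gap is in \eqref{eq:lambda_A_1+xcappartialA_2}. Your identification of the set with $(K_2+(-K_1))\setminus(K_2\ominus K_1)$ up to the null set $\partial(K_2\ominus K_1)$ is correct, and the mixed-volume expansion of $V_d(K_2+(-K_1))$ is fine, but two steps do not go through as described. First, the ``cruder'' control of $V(K_2[k],-K_1[d-k])$ by circumscribing $K_1$ in a ball fails: it yields $R(K_1)^{d-k}V_k(K_2)$ with $R(K_1)$ the circumradius, and $R(K_1)^{d-k}$ is not bounded by $\sum_{\ell}V_\ell(K_1)$ (take $K_1$ a segment of length $T$, so $R^{d-k}\sim T^{d-k}$ while $\sum_\ell V_\ell(K_1)\sim T$); one genuinely needs the factored mixed-volume estimate $V(K_2[k],M[d-k])\le c\,V_k(K_2)V_{d-k}(M)$. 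Second, and more seriously, there is no Steiner formula for the erosion: the asserted ``Steiner-type lower bound'' amounts to $V_d(K_2)-V_d(K_2\ominus K_1)\le c\sum_{k\le d-1}V_k(K_2)V_{d-k}(K_1)$, which is precisely the nontrivial content of \cite[Lemma 3.6]{HLS16}; the device available for balls (inner parallel shell volume bounded by outer parallel shell volume, i.e.\ (3.19) in \cite{HLS16}) does not extend naively to a general structuring element $K_1$, because the circumradius again enters with powers. You should also treat the case $K_2\ominus K_1=\emptyset$, where no cancellation of $V_d(K_2)$ occurs and one must instead use that $K_1$ fits in $K_2$ in no translate, so that $K_2$ is thin in some direction and $V_d(K_2)\le c\,V_1(K_1)V_{d-1}(K_2)$. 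As written, the decisive erosion estimate is asserted rather than proved.
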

	Analogous to \cite[Lemma 3.5]{HLS16} one can show the following lemma.
	\begin{lemma}
		There exists a constant $C_4>0$, depending on  $d$, $\gamma$ and the first moment in \eqref{Assumption first moment}, such that
		\begin{align}\label{eq:int_Acapk1capkn}
			\int \sum_{k=0}^d V_k(K\cap \hat{K}_1\cap\hdots\cap \hat{K}_n)\;\lambda^n(\mathrm{d}(\hat{x}_1,\dots,\hat{x}_n))\leq C_4^n\sum_{k=0}^{d}V_k(K)
		\end{align}
		for all $K\in\mathcal{K}^d$ and $n\in\mathbb{N}$.
	\end{lemma}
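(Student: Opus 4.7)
The plan is to prove \eqref{eq:int_Acapk1capkn} by induction on $n$, where the inductive step reduces to integrating out a single variable $\hat{x}_n = (x_n, m_n)$ against $\lambda = \gamma \lambda_d \otimes \Q$ while producing a bound that is linear in $\sum_{k=0}^d V_k(L)$, with $L = K \cap \hat{K}_1 \cap \cdots \cap \hat{K}_{n-1}$. The base case $n=0$ is trivial (interpreting the empty integral as $\sum_k V_k(K)$), so I will focus on isolating the ``one-step'' estimate
\begin{align*}
\int \sum_{k=0}^d V_k(L \cap \hat{K}_n)\,\lambda(\mathrm{d}\hat{x}_n) \;\leq\; C_4 \sum_{k=0}^d V_k(L),
\end{align*}
from which the claim follows by iterating $n$ times via Fubini.

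To obtain this one-step estimate the key trick is to convert the mixed sum $\sum_k V_k(\,\cdot\,)$ into a single $V_d$-expression by Steiner's formula. Specifically, applying \eqref{eq:steiner} with $r=1$ to the compact convex set $L \cap (K_{m_n}+x_n)$ gives, up to a constant depending only on $d$,
\begin{align*}
\sum_{k=0}^d V_k\bigl(L \cap (K_{m_n}+x_n)\bigr) \;\leq\; C\, V_d\bigl((L \cap (K_{m_n}+x_n))^1\bigr).
\end{align*}
Since $(A \cap B)^1 \subseteq A^1 \cap B^1$ and parallel bodies commute with translation, I can further bound this by $V_d\bigl(L^1 \cap (K_{m_n}^1 + x_n)\bigr)$. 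Integrating over $x_n \in \R^d$ and using the translative integral formula \eqref{eq:int_V_d A_1+xcapA_2} then produces the clean factorisation $V_d(L^1)\,V_d(K_{m_n}^1)$, and a final application of Steiner's formula in the reverse direction bounds each factor by a linear combination of intrinsic volumes, yielding
\begin{align*}
\int_{\R^d} \sum_{k=0}^d V_k\bigl(L \cap (K_{m_n}+x_n)\bigr)\,\mathrm{d}x_n \;\leq\; C'\sum_{k=0}^d V_k(L)\,\sum_{k=0}^d V_k(K_{m_n}).
\end{align*}

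It only remains to integrate over the mark $m_n$ against $\gamma\,\Q$. The assumption \eqref{Assumption first moment} guarantees $\int_\MM V_k(K_m)\,\Q(\mathrm{d}m) < \infty$ for $k \in \{1,\dots,d\}$, and $V_0(K_m)=1$ since $K_m$ is non-empty and convex, so $\int_\MM \sum_{k=0}^d V_k(K_m)\,\Q(\mathrm{d}m)$ is finite; this integral absorbed into $\gamma C'$ yields the desired constant $C_4$. Iterating the one-step estimate $n$ times by Fubini then gives \eqref{eq:int_Acapk1capkn}.

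The main obstacle I anticipate is purely bookkeeping: making sure that the Steiner inflation by radius $1$ is applied uniformly and that the resulting constants really only depend on $d$, $\gamma$ and the first moment \eqref{Assumption first moment}, independently of $n$, so that the iteration produces the clean exponential bound $C_4^n$ rather than something growing faster. Beyond this the argument is essentially the same as \cite[Lemma 3.5]{HLS16}, the only genuine adaptation being that the sets $\hat{K}_j$ now have random shapes governed by $\Q$ rather than being i.i.d.\ translates of a single grain, which is handled by the moment assumption.
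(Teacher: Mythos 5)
Your argument is correct, and it reaches the conclusion by a route that differs in its key technical step from the one the paper points to. The paper gives no proof but defers to \cite[Lemma 3.5]{HLS16}, whose one-step translative estimate rests on the iterated translative integral formula for intrinsic volumes and bounds for the resulting mixed functionals $V^{(j)}_{m_0,\dots,m_n}$. You instead collapse the whole sum $\sum_{k=0}^d V_k(\cdot)$ into the single quantity $V_d(\,\cdot^{\,1})$ via Steiner's formula \eqref{eq:steiner}, use the elementary inclusion $(A\cap B)^1\subseteq A^1\cap B^1$ together with $(K+x)^1=K^1+x$, and then invoke only the volume product formula \eqref{eq:int_V_d A_1+xcapA_2} before converting back with Steiner. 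This is sound: each step (non-negativity and monotonicity of $V_k$ on $\mathcal{K}^d$, Tonelli for the iteration, $V_0(K_m)=1$ so that \eqref{Assumption first moment} controls $\int_\MM\sum_k V_k(K_m)\,\Q(\mathrm{d}m)$) is justified, the one-step constant depends only on $d$, $\gamma$ and the first moment, and iterating indeed yields $C_4^n$. What your version buys is self-containedness relative to the tools actually quoted in the paper's preliminaries — no mixed translative functionals are needed; what it gives up is sharpness of the constant (the double Steiner inflation is wasteful), which is irrelevant here since only the form $C_4^n$ matters.
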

	\subsection{Normal approximation of Poisson functionals}
	Let $(\mathbb{X},\mathcal{X})$ be a measurable space with a $\sigma$-finite intensity measure $\mu$ and denote by $\mathbf{N}$ the set of $\sigma$-finite counting measures supplied with the $\sigma$-field $\mathcal{N}$ generated by the mappings of the form $\nu\mapsto\nu(B)$ for $B\in\mathcal{X}$. 
	A Poisson process $\xi$ on $(\mathbb{X},\mathcal{X})$ with intensity measure $\mu$ is a random element of $(\mathbf{N},\mathcal{N})$ which satisfies
	\begin{enumerate}[label=(\roman*)]
		\item $\xi(B)$ is Poisson distributed with parameter $\mu(B)$ for all $B\in\mathcal{X}$,
		\item $\xi(B_1),\dots,\xi(B_n)$ are independent for pairwise disjoint sets $B_1,\dots, B_n\in\mathcal{X}$ and $n\in\mathbb{N}$.
	\end{enumerate}
	A Poisson functional $F$ is a random variable that depends on the Poisson process $\xi$, i.e.\ $F=f(\xi)$, where $f$ is a real-valued measurable function on $\mathbf{N}$ and is called representative. For simplicity we use the notation $F=F(\xi)$. If $F$ is square-integrable, we write $F\in L^2_\xi$.
	
	The difference operator of a Poisson functional $F$ is defined by
	\begin{align*}
		D_{x}F=F(\xi+\delta_{x})-F(\xi)
	\end{align*}
	for $x\in\mathbb{X}$, where $\delta_x$ denotes the Dirac measure concentrated at $x$. Iteratively, one can define higher order difference operators as
	\begin{align*}
		D_{x_1,\dots,x_n}^nF=D_{x_1}(D_{x_2,\dots,x_n}^{n-1}F)
	\end{align*}
	for $n\in\mathbb{N}$ and $x_1,\dots,x_n\in\mathbb{X}$.
	With the help of these difference operators we can introduce the Fock space representation, which expresses the variance in terms of difference operators. By \cite[Theorem 18.6]{LP18} we have for $F\in L^2_\xi$,
	\begin{align}\label{eq:fockspace}
		\Var[F]=\sum_{n=1}^{\infty}\frac{1}{n!}\int(\E[D_{x_1,\dots,x_n}^nF])^2\;\mu^n(\mathrm{d}(x_1,\dots,x_n)).
	\end{align}
	This representation will be used to calculate the asymptotic variance of $\varphi(Z_u\cap W_r)$. To show the positivity of the asymptotic variance, we use the following theorem, which is a generalised version of the reverse Poincar\'e inequality derived in \cite[Theorem 1.1]{ST22}. A proof of this new lower variance bound can be found in Appendix \ref{app:lower_variance}.
	\begin{theorem}\label{thm:lowervarbound}
		Let $F\in L^2_\xi$ satisfy
		\begin{align}\label{Assumption_variance_theoretisch}
			\E\int(D_{x_1,\dots,x_{k+1}}^{k+1}F)^2\mu^{k+1}(\mathrm{d}(x_1,\dots,x_{k+1}))\leq \alpha\E\int(D_{x_1,\dots,x_{k}}^{k}F)^2\;\mu^k(\mathrm{d}(x_1,\dots,x_k))<\infty
		\end{align}
		for some $\alpha\geq 0$ and $k\in\mathbb{N}$. Then,
		\begin{align*}
			\Var[F]\geq \frac{1}{c(\alpha,k)}\E\int(D_{x_1,\dots,x_{k}}^{k}F)^2\;\mu^k(\mathrm{d}(x_1,\dots,x_k))
		\end{align*}
		for some constant $c(\alpha,k)>0$, which depends only on $\alpha$ and $k$.
	\end{theorem}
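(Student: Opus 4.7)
My plan is to project everything onto the Poisson chaos and reduce both sides of the inequality to series in a single nonnegative sequence. I would first invoke the Wiener--It\^o chaos decomposition $F = \E F + \sum_{n \geq 1} I_n(f_n)$ with symmetric kernels $f_n = \frac{1}{n!}\, \E[D^n_{x_1,\dots,x_n} F]$ (so that $\int (\E D^n F)^2\,\mathrm{d}\mu^n = (n!)^2 \|f_n\|^2$). Setting $a_n := \|f_n\|_{L^2(\mu^n)}^2$, the Fock-space isometry specializing to \eqref{eq:fockspace} gives $\Var[F] = \sum_{n \geq 1} n!\, a_n$, while the product rule $D^k_{x_1,\dots,x_k} I_n(f_n) = \frac{n!}{(n-k)!}\, I_{n-k}\bigl(f_n(x_1,\dots,x_k,\cdot)\bigr)$ together with orthogonality of chaoses of distinct order produces, for every $j \geq 0$,
\begin{equation*}
T_j(F) \;:=\; \E\!\int \bigl(D^j_{x_1,\dots,x_j} F\bigr)^2\, \mu^j(\mathrm{d}(x_1,\dots,x_j)) \;=\; \sum_{n \geq j} \frac{(n!)^2}{(n-j)!}\, a_n.
\end{equation*}
Introducing $b_n := \frac{(n!)^2}{(n-k)!}\, a_n$ for $n \geq k$, the hypothesis $T_{k+1}(F) \leq \alpha T_k(F)$ becomes $\sum_{n \geq k+1}(n-k)\, b_n \leq \alpha \sum_{n \geq k} b_n$, and the desired lower bound is equivalent to $\sum_{n \geq k} \frac{(n-k)!}{n!}\, b_n \geq c(\alpha,k)^{-1} \sum_{n \geq k} b_n$.

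Second, I would apply a Markov-style truncation: for any integer $N \geq k$,
\begin{equation*}
\sum_{n > N} b_n \;\leq\; \frac{1}{N-k+1} \sum_{n > N}(n-k)\, b_n \;\leq\; \frac{\alpha}{N-k+1}\, T_k(F).
\end{equation*}
Choosing $N := k + \lceil 2\alpha \rceil$ (the case $\alpha = 0$ being trivial since then $b_n = 0$ for $n > k$) guarantees $\sum_{n=k}^{N} b_n \geq \tfrac{1}{2}\, T_k(F)$. To conclude, I would compare term by term: since $\frac{(n-k)!}{n!} \geq \frac{(N-k)!}{N!}$ on the range $k \leq n \leq N$,
\begin{equation*}
\Var[F] \;\geq\; \sum_{n=k}^{N} n!\, a_n \;=\; \sum_{n=k}^{N} \frac{(n-k)!}{n!}\, b_n \;\geq\; \frac{(N-k)!}{2\, N!}\, T_k(F),
\end{equation*}
which is the asserted inequality with $c(\alpha,k) := 2\, N!/(N-k)!$, a constant depending only on $\alpha$ and $k$.

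The main obstacle is the bookkeeping in the first step: rewriting both the hypothesis and the conclusion as comparable sums in the single sequence $(b_n)_{n \geq k}$ requires careful use of the product formula for difference operators and the chaos isometry. Once that algebraic reduction is in place, the remaining argument is a short Markov-type estimate, and the assumed finiteness of $T_k(F)$ ensures that all Fock-space rearrangements are absolutely convergent.
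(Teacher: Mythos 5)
Your proposal is correct and follows essentially the same route as the paper: both proofs reduce the claim, via the Fock space representation, to the identity $\E\int (D^jF)^2\,\mathrm{d}\mu^j=\sum_{n\geq j}\tfrac{(n!)^2}{(n-j)!}\|f_n\|_n^2$ and then compare the resulting series. The only difference is the closing elementary estimate -- you truncate at $N=k+\lceil 2\alpha\rceil$ and use a Markov-type bound on the tail of $(b_n)$ before comparing term by term, whereas the paper dominates the polynomial $g(m)=\prod_{i=0}^{k-1}(m-i)(\alpha-m+k+1)$ by a single constant $c(\alpha,k)$; both yield valid constants depending only on $\alpha$ and $k$.
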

	
	Beside expectation and variance we also derive quantitative central limit theorems in terms of the Wasserstein and the Kolmogorov distance. For a Poisson functional $F$ with $\E[\lvert F\rvert]<\infty$ the Wasserstein distance of $F$ to a standard Gaussian random variable $N$ is given by
	\begin{align*}
		\mathrm{d}_W(F,N)=\sup_{h\in \mathrm{Lip}_1}\lvert\E[h(F)]-\E[h(N)]\rvert,
	\end{align*}
	where $\mathrm{Lip}_1$ denotes the set of Lipschitz functions with Lipschitz constant less than or equal to one. The Kolmogorov distance of these random variables is defined by
	\begin{align*}
		\mathrm{d}_K(F,N)=\sup_{t\in\mathbb{R}}\;\lvert\p(F\leq t)-\p(N\leq t)\rvert.
	\end{align*} 
	To bound the Wasserstein distance, we use the following bounds from \cite[Theorem 1.1]{LPS16} and \cite[Theorem 3.1]{BPT20}.
	\begin{theorem}
		\label{thm:clt_theory_Wasserstein}
		Let $F\in L_\xi^2$ be a Poisson functional with $\E[F]=0$, $\Var[F]=1$ and $\E\int(D_{x}F)^2\;\mu(\mathrm{d}x)<\infty$. By $N$ we denote a standard Gaussian random variable. Then,	
		\begin{align*}
			\mathrm{d}_W(F,N)\leq \gamma_1+\gamma_2+\gamma_3\quad\text{ and }\quad \mathrm{d}_W(F,N)\leq \gamma_1+\gamma_2+\widetilde{\gamma}_3,
		\end{align*} 
		where
		\begin{align*}
			&\gamma_1=2\Big[\int(\E[(D_{x_1}F)^2(D_{x_2}F)^2])^{1/2}(\E[(D_{x_1,x_3}^2F)^2(D_{x_2,x_3}^2F)^2])^{1/2}\;\mu^3(\mathrm{d}(x_1,x_2,x_3))\Big]^{1/2},\\
			&\gamma_2=\Big[\int\E[(D_{x_1,x_3}^2F)^2(D_{x_2,x_3}^2F)^2]\;\mu^3(\mathrm{d}(x_1,x_2,x_3))\Big]^{1/2},\\
			&\gamma_3=\int\E[\lvert D_{x}F\rvert^3]\;\mu(\mathrm{d}x),\\
			&\widetilde{\gamma}_3=\int(\E[\lvert D_{x}F\rvert^3])^{1/3}(\E[\min\{\sqrt{8}\lvert D_xF\rvert^{3/2},\lvert D_xF\rvert^3\}])^{2/3}\;\mu(\mathrm{d}x).
		\end{align*}
	\end{theorem}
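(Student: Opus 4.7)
The plan is to deploy the Malliavin--Stein method on the Poisson space. For any $1$-Lipschitz test function $h$, Stein's equation $f'(t) - tf(t) = h(t) - \E h(N)$ admits a solution $f$ with $\|f'\|_\infty \le 1$ and $\|f''\|_\infty \le 2$, so the task reduces to bounding $|\E[f'(F) - Ff(F)]|$ uniformly in such $f$. The ingredients I would use are: the integration-by-parts identity $\E[F g(\xi)] = \E \int D_x g(\xi)\,(-D_x L^{-1}F)\,\mu(\mathrm{d}x)$ for centred $F\in L^2_\xi$ (where $L$ is the Ornstein--Uhlenbeck generator with chaos decomposition $-L^{-1}F = \sum_{n\ge 1} \frac{1}{n} I_n(f_n)$), the pointwise bound $|D_x L^{-1}F| \le \E[|D_x F|\,|\,\xi,x]$ read off the chaos expansion, and the Fock-space representation \eqref{eq:fockspace}.

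First I would rewrite $\E[Ff(F)]$ using the integration-by-parts identity with $g = f(F)$. The crucial subtlety on Poisson space is that there is no chain rule: $D_x f(F) = f(F + D_xF) - f(F) = f'(F)\,D_xF + R_x$, where $|R_x| \le \tfrac{1}{2}\|f''\|_\infty (D_xF)^2$ by Taylor's formula. Using the identity $\E \int D_xF\,(-D_xL^{-1}F)\,\mu(\mathrm{d}x) = \Var F = 1$, this yields the decomposition
\begin{align*}
\E[f'(F) - Ff(F)] = \E\Bigl[f'(F)\Bigl(1 - \int D_xF\,(-D_xL^{-1}F)\,\mu(\mathrm{d}x)\Bigr)\Bigr] - \E\int R_x\,(-D_xL^{-1}F)\,\mu(\mathrm{d}x).
\end{align*}
The remainder piece is controlled by combining $|R_x| \le (D_xF)^2$ with the pointwise bound $|D_xL^{-1}F| \le |D_xF|$ obtained from Jensen's inequality on the chaos expansion; this produces the cubic contribution $\gamma_3$. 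The sharper quantity $\widetilde{\gamma}_3$ from \cite{BPT20} comes from an additional case split depending on whether $|D_xF|$ is smaller or larger than a threshold, using either the $\|f''\|_\infty$ bound or a direct $\|f'\|_\infty$ bound. The first piece is estimated by Cauchy--Schwarz against $\|f'\|_\infty \le 1$, leaving only the variance of the inner-product $\int D_xF\,(-D_xL^{-1}F)\,\mu(\mathrm{d}x)$ to bound.

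The heart of the argument, and the step I expect to be the main obstacle, is controlling that variance. Expanding both $F$ and $-L^{-1}F$ in the Wiener--It\^o chaos and applying the product formula for multiple Poisson integrals, the inner product becomes a double sum of multiple integrals of symmetric contractions of the kernels $\tilde f_n = \E[D^n_{\cdot} F]/n!$, and the Fock-space formula then expresses its variance as a sum of $L^2$-norms of such contractions. After a careful Cauchy--Schwarz on the contraction indices, one rewrites the contractions back in terms of first- and second-order difference operators. The bookkeeping collapses the bound to integrals over only three variables $(x_1,x_2,x_3)$, where the shared variable $x_3$ arises from the contraction pairing, and the two surviving integrals are exactly $\gamma_1^2$ and $\gamma_2^2$. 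Tracking the combinatorics of symmetric contractions in the product formula for Poisson integrals, and making the Cauchy--Schwarz estimate tight enough to yield the three-variable form rather than a higher-order one, is the delicate part of the proof.
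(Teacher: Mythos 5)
This statement is not proven in the paper at all: it is imported verbatim from \cite[Theorem 1.1]{LPS16} and \cite[Theorem 3.1]{BPT20}, so there is no internal proof to compare against. Judged on its own merits, your sketch gets the outer architecture of the Malliavin--Stein argument right: Stein's equation, the integration-by-parts identity $\E[Ff(F)]=\E\int D_xf(F)\,(-D_xL^{-1}F)\,\mu(\mathrm{d}x)$, the Taylor remainder $R_x$ with $\lvert R_x\rvert\leq \tfrac12\lVert f''\rVert_\infty (D_xF)^2$ producing $\gamma_3$ (and, after the threshold split of \cite{BPT20}, $\widetilde{\gamma}_3$), and the reduction of the leading term to controlling $\E[(1-\langle DF,-DL^{-1}F\rangle)^2]$.

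The step you yourself flag as the main obstacle is, however, where the plan breaks. Expanding $F$ and $-L^{-1}F$ in chaos and invoking the product formula for multiple Poisson integrals leads to bounds expressed through contraction norms of the kernels $f_n\star_r^\ell f_m$, summed over all chaos orders --- this is the older route of Peccati--Sol\'e--Taqqu--Utzet and Lachi\`eze-Rey--Peccati, and there is no routine ``bookkeeping'' that converts those contraction norms into the three-variable difference-operator integrals $\gamma_1^2$ and $\gamma_2^2$. The actual proof in \cite{LPS16} is designed precisely to avoid contractions: one represents $-D_xL^{-1}F=\int_0^1 P_sD_xF\,\mathrm{d}s$ via Mehler's formula for the Ornstein--Uhlenbeck semigroup $P_s$ (which also yields the moment bounds $\E\lvert D_xL^{-1}F\rvert^p\leq\E\lvert D_xF\rvert^p$ you use for the remainder), applies the Poincar\'e inequality $\Var[G]\leq\E\int(D_{x_3}G)^2\mu(\mathrm{d}x_3)$ to $G=\langle DF,-DL^{-1}F\rangle$, and computes $D_{x_3}G$ with the discrete product rule $D_{x_3}(AB)=(D_{x_3}A)B+A(D_{x_3}B)+(D_{x_3}A)(D_{x_3}B)$; Cauchy--Schwarz then delivers exactly $\gamma_1$ and $\gamma_2$, with $x_3$ entering as the Poincar\'e variable rather than a contraction index. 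Replacing your contraction argument by this ``second-order Poincar\'e'' mechanism is not a cosmetic change but the essential idea of the theorem.
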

	The following similar result for the Kolmogorov distance can be found in \cite[Theorem 1.2]{LPS16}. 
	\begin{theorem}\label{thm:clt_kolmogorov_theory}
		Let $F\in L_\xi^2$ be a Poisson functional with $\E[F]=0$, $\Var[F]=1$ and $\E\int(D_{x}F)^2\;\mu(\mathrm{d}x)<\infty$. By $N$ we denote a standard Gaussian random variable. Then,	
		\begin{align*}
			\mathrm{d}_K(F,N)\leq \gamma_1+\gamma_2+\gamma_3+\gamma_4+\gamma_5+\gamma_6,
		\end{align*} 
		where $\gamma_1,\gamma_2,\gamma_3$ are defined as in Theorem \ref{thm:clt_theory_Wasserstein} and
		\begin{align*}
			&\gamma_4=\frac{1}{2}(\E[F^4])^{1/4}\int[(\E[(D_{x}F)^4])^{3/4}]\;\mu(\mathrm{d}x),\\
			&\gamma_5=\Big(\int\E[(D_{x}F)^4]\;\mu(\mathrm{d}x)\Big)^{1/2},\\
			&\gamma_6=\Big(\int6(\E[(D_{x_1}F)^4])^{1/2}(\E[(D_{x_1,x_2}^2F)^4])^{1/2}+3\E[(D_{x_1,x_2}^2F)^4]\;\mu^2(\mathrm{d}(x_1,x_2))\Big)^{1/2}.
		\end{align*}
	\end{theorem}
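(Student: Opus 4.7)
The plan is to combine Stein's method with Malliavin calculus on the Poisson space, paralleling the proof of the Wasserstein bound in Theorem~\ref{thm:clt_theory_Wasserstein} but with extra care to handle the non-smoothness of the test functions that define the Kolmogorov distance. For each $t\in\mathbb{R}$ I would introduce the Stein solution $f_t$ of $f_t'(x)-xf_t(x)=\mathbf{1}\{x\leq t\}-\p(N\leq t)$; classical estimates give that $\|f_t\|_\infty$ and $\|f_t'\|_\infty$ are bounded by absolute constants, although $f_t'$ has a jump at $t$ so no uniform bound on $f_t''$ is available. The identity $\mathrm{d}_K(F,N)=\sup_{t\in\mathbb{R}}\lvert\E[f_t'(F)-Ff_t(F)]\rvert$ then reduces the task to bounding the right-hand side uniformly in $t$.

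First I would use the Poisson integration by parts formula $F=\delta(-DL^{-1}F)$, where $\delta$ is the Skorohod integral and $L^{-1}$ the pseudo-inverse of the Ornstein--Uhlenbeck generator, to rewrite
\begin{align*}
\E[Ff_t(F)]=\E\int (D_xf_t(F))(-D_xL^{-1}F)\,\mu(\mathrm{d}x).
\end{align*}
Writing $D_xf_t(F)=f_t'(F)D_xF+R_t(x)$ and subtracting the target $\E[f_t'(F)]$, the leading contribution $\E[f_t'(F)(1-\langle DF,-DL^{-1}F\rangle_{L^2(\mu)})]$ is bounded by Cauchy--Schwarz together with the $L^2$-isometry for iterated difference operators underlying the Fock space representation \eqref{eq:fockspace}; the second order differences that appear when expanding $1-\langle DF,-DL^{-1}F\rangle_{L^2(\mu)}$ in chaos then produce exactly $\gamma_1$ and $\gamma_2$, as in the Wasserstein case. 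The term $\gamma_3$ arises from bounding the smooth part of $\int\E[R_t(x)(-D_xL^{-1}F)]\,\mu(\mathrm{d}x)$ via the elementary estimate $\lvert f_t(b)-f_t(a)-f_t'(a)(b-a)\rvert\leq C\lvert b-a\rvert^2$, which holds away from $t$.

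The main obstacle, and the origin of the additional terms $\gamma_4$, $\gamma_5$ and $\gamma_6$ absent from the Wasserstein bound, is the jump of $f_t'$ at $t$. I would split $R_t$ into a smooth part, treated as above, and a singular part effectively supported on the event $\{F\in[t-\lvert D_xF\rvert,t+\lvert D_xF\rvert]\}$; its contribution is controlled by Chebyshev's inequality applied to $F$ with its fourth moment, giving bounds of the form $\p(F\in[t-\lvert D_xF\rvert,t+\lvert D_xF\rvert])\lesssim \E[F^4]^{1/2}\E[\lvert D_xF\rvert^2]^{1/2}$, and then H\"older's inequality distributes $\E[F^4]^{1/4}$ against $(D_xF)^4$ to yield $\gamma_4$. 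A finer Taylor expansion producing second order difference operators $D^2_{x_1,x_2}F$ gives $\gamma_5$ and $\gamma_6$. The hardest part will be this sharp localisation around the jump of $f_t'$ together with the bookkeeping needed to reduce every remainder to first and second order difference operators uniformly in $t$, without retaining any explicit dependence on $L^{-1}$.
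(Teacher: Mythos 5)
First, a point of reference: the paper does not prove Theorem \ref{thm:clt_kolmogorov_theory} at all; it is quoted verbatim from \cite[Theorem 1.2]{LPS16}, so your proposal has to be measured against that source rather than against an argument in this paper. Your general framework --- Stein's equation for $\mathbbm{1}\{x\leq t\}$, the integration by parts $\E[Ff_t(F)]=\E\int D_xf_t(F)(-D_xL^{-1}F)\,\mu(\mathrm{d}x)$, and the extraction of $\gamma_1,\gamma_2$ from $\E[f_t'(F)(1-\langle DF,-DL^{-1}F\rangle_{L^2(\mu)})]$ via the chaos expansion --- is indeed the route taken there.

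However, your treatment of the jump of $f_t'$ contains a genuine gap. You propose to control the singular part of the remainder by bounding $\p(F\in[t-\lvert D_xF\rvert,t+\lvert D_xF\rvert])$ via Chebyshev's inequality and $\E[F^4]$. Moment bounds on $F$ cannot control the probability that $F$ lands in a small window around an \emph{arbitrary} $t$ (consider $t=0$); that is an anti-concentration estimate, essentially equivalent to the Kolmogorov bound you are trying to prove, so the step is circular. In the actual proof the indicator term is handled by a \emph{second} application of Malliavin integration by parts: the problematic expectation is rewritten as $\E[\mathbbm{1}\{F>t\}\,\delta(u)]$ for a suitable process $u$ built from $D_xF$ and $-D_xL^{-1}F$, and $\E[\delta(u)^2]$ is then estimated by the Skorohod isometry $\E[\delta(u)^2]\leq\int\E[u_x^2]\,\mu(\mathrm{d}x)+\iint\E[(D_yu_x)^2]\,\mu^2(\mathrm{d}(x,y))$. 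This is where $\gamma_5$ and the second-order difference operators in $\gamma_6$ actually come from; no Taylor expansion, however fine, produces $D^2_{x_1,x_2}F$. Likewise $\gamma_4$ does not originate from the jump: it comes from the quadratic part $\tfrac12(D_xF)^2(\lvert F\rvert+\sqrt{2\pi}/4)$ of the non-smooth Stein remainder estimate, combined with H\"older's inequality (distributing $\E[F^4]^{1/4}$ against the $4/3$-norm of $(D_xF)^2\lvert D_xL^{-1}F\rvert$) and the contractivity $\E[\lvert D_xL^{-1}F\rvert^p]\leq\E[\lvert D_xF\rvert^p]$ furnished by Mehler's formula. Without the second integration by parts and the Skorohod isometry your sketch cannot generate $\gamma_5$ and $\gamma_6$, so the argument as proposed does not close.
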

	
	\section{Main results}	
	This chapter provides the main results of this paper, which include the asymptotics of the expectation and the variance of geometric functionals of excursion sets and central limit theorems for growing observation windows $(W_r)_{r\geq1}$.
	\begin{theorem}
		\label{thm:intrinsicvolumes_expectation}
		Let $\varphi$ be a geometric functional and let $u>0$ be fixed. Under the first order moment condition \eqref{Assumption first moment}, we have
		\begin{align*}
			\lim\limits_{r\to\infty}\frac{\E[\varphi(Z_u\cap W_r)]}{V_d(W_r)}=\mathbb{E}[\varphi(Z_u\cap C_0^d)],
		\end{align*}
		where $C_0^d=[0,1)^d$ denotes the half-open $d$-dimensional unit cube and $\varphi(Z_u\cap C_0^d)$ is defined as in \eqref{eq:c0^d}.
	\end{theorem}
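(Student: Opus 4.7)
The plan is to exploit the additivity and translation invariance of $\varphi$ together with the stationarity of $\eta$ (and hence of $Z_u$) by tiling $W_r$ with half-open unit cubes, following the strategy used for the Boolean model in \cite{HLS16}. Set $C_0^d+z$ for $z\in\mathbb{Z}^d$ as the translated half-open unit cubes. They partition $\mathbb{R}^d$, so the convention \eqref{eq:c0^d} combined with additivity yields
\begin{align*}
	\varphi(Z_u\cap W_r)=\sum_{z\in\mathbb{Z}^d}\varphi\bigl(Z_u\cap W_r\cap (C_0^d+z)\bigr).
\end{align*}
Split the sum into the interior indices $\mathcal{I}_r=\{z\in\mathbb{Z}^d:C^d+z\subseteq W_r\}$ and the remaining boundary indices $\mathcal{B}_r$. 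For $z\in\mathcal{I}_r$ we have $W_r\cap (C_0^d+z)=C_0^d+z$; using translation invariance of $\varphi$ and the fact that $Z_u$ is stationary (because $\eta$ has intensity measure $\gamma\lambda_d\otimes\Q$) gives $\E[\varphi(Z_u\cap(C_0^d+z))]=\E[\varphi(Z_u\cap C_0^d)]$. Since $|\mathcal{I}_r|/V_d(W_r)\to 1$ as $r\to\infty$, the interior contribution yields exactly the claimed limit, provided the per-cube expectation is finite.

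Next I would justify that $\E[|\varphi(Z_u\cap C_0^d)|]<\infty$, which is the main technical point since, in contrast to the Boolean model, $Z_u$ is in general not a standard random set. I would use the representation of Proposition \ref{lemma E polyconvex} applied to the window $C^d$: setting $n=\eta(S_{C^d})$, one writes $Z_u\cap C_0^d$ via inclusion--exclusion as a signed sum of $\varphi(\bigcap_{I\in J}X_I)$ over non-empty $J\subseteq\mathcal{P}(\{1,\dots,n\})\setminus\{\emptyset\}$. Each intersection $\bigcap_{I\in J}X_I$ is compact convex and sits in $C^d$, so local boundedness \eqref{eq:locallybounded} supplies the pointwise bound $M_\varphi$. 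To control the potentially doubly exponential number of terms, I would regroup the sum by the support $L=\bigcup_{I\in J}I\subseteq\{1,\dots,n\}$: the intersection $\bigcap_{I\in J}X_I$ is contained in $\bigcap_{j\in L}\hat{K}_j\cap C^d$, and this convex intersection is non-empty only on the event that all those bodies meet inside $C^d$. The Mecke formula then turns the expectation into an integral against $\lambda^{|L|}$, which is finite by \eqref{eq:int_Acapk1capkn} and the first moment condition \eqref{Assumption first moment}.

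For the boundary contribution, the very same per-cube bound applies to each $z\in\mathcal{B}_r$ since $W_r\cap(C_0^d+z)$ is a convex subset of the unit cube, so $\sum_{z\in\mathcal{B}_r}\E[|\varphi(Z_u\cap W_r\cap(C_0^d+z))|]$ is bounded by a constant times $|\mathcal{B}_r|=O(V_{d-1}(W_r))$, which is $o(V_d(W_r))$ for the growing convex windows $W_r=rW$. Dividing by $V_d(W_r)$ and combining with the interior computation gives the claimed limit.

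The main obstacle is the finiteness/boundedness step in the second paragraph: the inclusion--exclusion expansion of $Z_u\cap C_0^d$ has up to $2^{2^n}$ terms where $n$ is Poisson distributed, so the naive pointwise bound $M_\varphi\cdot 2^{2^n}$ is not integrable. Overcoming this requires exploiting that most of those intersections coincide with (or are contained in) one of the $X_I$'s indexed by the union $L$ of indices, reducing the count to at most $2^n$ effective layers and allowing the Mecke--plus--\eqref{eq:int_Acapk1capkn} bookkeeping to close with only the first moment assumption on $V_i(K_m)$.
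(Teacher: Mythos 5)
Your overall architecture is the right one and matches the paper's: reduce everything to a uniform bound on $\E[\lvert\varphi(Z_u\cap K)\rvert]$ for convex $K$ contained in a unit cube, and then conclude by a tiling argument (the paper does not redo the tiling by hand but simply checks that $\phi(A)=\E[\varphi(Z_u\cap A)]$ is additive, translation invariant and locally bounded and invokes \cite[Lemma 9.2.2]{SW08}, which encapsulates exactly your interior/boundary cube decomposition). You also correctly identify where the real difficulty sits. However, the step you flag as ``the main obstacle'' is not actually resolved, and the resolution you sketch does not work. Regrouping the inclusion--exclusion sum by the support $L=\bigcup_{I\in J}I$ does not reduce the number of terms: for a fixed $L$ there are still doubly exponentially many families $J$ with $\bigcup_{I\in J}I=L$, each contributing a term that you can only bound by $M_\varphi$ in absolute value, since a geometric functional is not monotone and containment of $\bigcap_{I\in J}X_I$ in $\bigcap_{j\in L}\hat{K}_j\cap C^d$ therefore gives no bound on $\varphi$ of the intersection. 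Your claim that ``most of those intersections coincide with (or are contained in) one of the $X_I$'s indexed by the union $L$'' also fails globally: one has $X_I\cap Q\subseteq X_{I\cup\{j\}}\cap Q$ only on regions $Q$ that are \emph{entirely covered} by $\hat{K}_j$ (and $X_{I\cup\{j\}}\cap Q=\emptyset$ only where $Q$ misses $\hat{K}_j$), so no collapse of the union to ``$2^n$ effective layers'' is available on all of $C^d$, and no cancellation in the signed sum is identified. The Mecke-plus-\eqref{eq:int_Acapk1capkn} bookkeeping therefore cannot close with the first moment assumption alone along the route you describe.

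The missing idea, which is the actual novelty of the paper's proof, is the \emph{adaptive} cube decomposition: one refines a dyadic grid of cubes $Q_{n,z}$ until each cube is hit-but-not-covered by at most $L$ supports ($\eta([Q_{n,z}]^*)\leq L$). On such a cube the local containment relations above do apply, the supports covering the cube can be merged into a single index set, and $Z_u\cap Q_{n,z}$ is a union of at most $2^{L}$ convex sets, so the inclusion--exclusion count is deterministically at most $2^{2^{L}}$ and local boundedness gives a per-cube bound $2^{3^d}2^{2^{L}}M_\varphi$. The randomness is then entirely in the number of cubes $\lvert I_{K,L}\rvert$ of the adaptive grid, whose expectation is finite by the geometric probability estimate of Lemma \ref{lemma_prob_exp} (which needs only the first moment condition \eqref{Assumption first moment}). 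Without this, or some equivalent device, your proof of $\E[\lvert\varphi(Z_u\cap C_0^d)\rvert]<\infty$ has a genuine gap; the rest of your argument is sound.
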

	\begin{rema}
		For $\varphi=V_d$ one can calculate the expectation directly. Since the Poisson process is stationary, $f_\eta(y)$ has the same distribution as $f_\eta(0)$. Therefore, $\p(y\in Z_u)=\p(0\in Z_u)$ for all $y\in W_r$ and we have
		\begin{align*}
			\E[V_d(Z_u\cap W_r)]=\E\int_{W_r}\mathbbm{1}\{y\in Z_u\}\;\mathrm{d}y=\p(0\in Z_u)V_d(W_r),
		\end{align*}
		which shows the result in Theorem \ref{thm:intrinsicvolumes_expectation} not only in the limit but for all $r\geq 1$ as $\p(0\in Z_u)$ does not depend on the observation window $W_r$.
	\end{rema}
	
	A similar result as in Theorem \ref{thm:intrinsicvolumes_expectation} was shown in \cite[Theorem 9.2.1]{SW08} for the limit of $\E[\varphi(Z\cap W_r)]$, where $Z$ is a so called standard random set. Remember that for a polyconvex set $A\in\mathcal{R}^d$, $N(A)$ denotes the smallest possible $M\in\mathbb{N}$ such that $A$ can be written as a union of $M$ compact convex sets, i.e.\ $A=\bigcup_{i=1}^MK_i$ for some compact convex sets $K_i$, $i\in\{1,\dots,M\}$. A standard random set $Z$ defined as in \cite[Definition 9.2.1]{SW08} fulfils the integrability condition $\E[2^{N(Z\cap C^d)}]<\infty$. 
	Our excursion sets do not necessarily fulfil this integrability condition, which is essential for the proof of the asymptotic expectation in \cite[Theorem 9.2.1]{SW08}, and are therefore not necessarily standard random sets. In fact, if $\eta(S_{C^d})=n$, the construction in Proposition \ref{lemma E polyconvex} may yield up to $2^{n}-1$ convex sets and if $N(Z_u\cap C^d)$ becomes with high probability too large, $\E[2^{N(Z_u\cap C^d)}]$ is no longer finite.
	Since Proposition \ref{lemma E polyconvex} only shows an upper bound for $N(Z_u\cap C^d)$, the following example shows that $\E[2^{N(Z_u\cap C^d)}]$ is indeed not necessarily finite.
	\begin{example}\label{example:notstandard}
		Let $\mathbb{Q}$ be the probability measure on $\mathbb{M}=\mathbb{N}$ with $\Q(\{k\})=(1-p)p^{k-1}$ for $k\in\mathbb{N}$ and some fixed $p\in(2^{-1/4},1)$, i.e. the marks are geometrically distributed with parameter $p$. We consider the case $\gamma=1$ and $d=2$, where for $m\in\mathbb{N}$ and $x\in\mathbb{R}^2$, $g_m:\mathbb{R}^2\to\mathbb{R}_{\geq 0}$ is  defined by
		\begin{align*}
			g_m(x)=\begin{cases}
				\frac{u}{2} &\text{ for }x\in K_m,\\
				0 &\text{ else,}
			\end{cases}
		\end{align*}
		where $K_{2j-1}=[-\frac{1}{4(2j-1)},\frac{1}{4(2j-1)}]\times[-1,1]$ and $K_{2j}=[-1,1]\times[-\frac{1}{4(2j-1)},\frac{1}{4(2j-1)}]$ for $j\in\mathbb{N}$. 
		One can show that $\E[2^{N(Z_u\cap C^2)}]=\infty$ by constructing configurations with $2i$ points and $N(Z_u\cap C^2)=i^2$ for all $i\in\mathbb{N}$, which arise with a sufficiently large probability. For a detailed construction see Section \ref{proof:example}.
	\end{example}
	Under a second order moment condition we now consider the asymptotic behaviour of the variance.
	\begin{theorem}\label{thm:variance_limit}
		Let $\varphi$ be a geometric functional and let $u>0$ be fixed. Assume \eqref{Assumption_fourth_moment} for $k=2$. Then, the limit
		\begin{align*}
			\lim\limits_{r\to\infty}\frac{\Var[\varphi(Z_u\cap W_r)]}{V_d(W_r)}=\sigma_0
		\end{align*}
		exists, is finite and given by
		\begin{align*}
			\sigma_0=\sum_{n=1}^\infty \frac{\gamma}{n!}\int_\MM \int_{(\mathbb{R}^d\times\MM)^{n-1}}(\E D^n_{(\mathbf{0},m),\hat{x}_2,\dots,\hat{x}_n}\varphi(Z_u\cap K))^2\;\lambda^{n-1}(\mathrm{d}(\hat{x}_2,\dots,\hat{x}_n))\;\mathbb{Q}(\mathrm{d}m),
		\end{align*}
		where $K=K_{m}\cap\bigcap_{j=2}^n\hat{K}_j$ and $\mathbf{0}=(0,\dots,0)\in\mathbb{R}^d$.
	\end{theorem}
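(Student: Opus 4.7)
My plan is to apply the Fock space representation \eqref{eq:fockspace} to $F_r := \varphi(Z_u \cap W_r)$, which expresses the variance as the infinite sum
\[
\Var[F_r] = \sum_{n=1}^\infty \frac{1}{n!}\, T_n(r), \qquad T_n(r) := \int \bigl(\E\, D^n_{\hat{x}_1,\dots,\hat{x}_n} F_r\bigr)^{2} \lambda^n\bigl(d(\hat{x}_1,\dots,\hat{x}_n)\bigr),
\]
and then, for each fixed $n$, to identify the pointwise limit of $T_n(r)/V_d(W_r)$ as $r\to\infty$, justifying the interchange of this limit with the sum over $n$ by dominated convergence.

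The decisive structural input for both steps is a localization of the iterated difference operator. Since adding a marked point $\hat{x}=(x,m)$ changes $f_\eta$ only inside $\hat{K}=K_m+x$, the symmetric difference $Z_u(\eta+\delta_{\hat{x}})\triangle Z_u(\eta)$ is contained in $\hat{K}$. Expanding $D^n$ as the alternating sum over subsets of $\{1,\dots,n\}$ and exploiting the additivity of $\varphi$ through inclusion--exclusion, I will derive
\[
D^n_{\hat{x}_1,\dots,\hat{x}_n}\varphi(Z_u \cap W_r) = D^n_{\hat{x}_1,\dots,\hat{x}_n}\varphi\bigl(Z_u \cap W_r \cap \hat{K}_1 \cap \dots \cap \hat{K}_n\bigr),
\]
and from this, using local boundedness \eqref{eq:locallybounded} and Steiner's formula \eqref{eq:steiner}, the pointwise estimate
\[
\bigl|D^n_{\hat{x}_1,\dots,\hat{x}_n}\varphi(Z_u \cap W_r)\bigr| \le c(n,\varphi)\sum_{k=0}^{d} V_k\bigl(\hat{K}_1 \cap \dots \cap \hat{K}_n\bigr).
\]

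To extract the pointwise limit, I would substitute $x_j \mapsto x_1 + \tilde{x}_j$ for $j \ge 2$ inside the definition of $T_n(r)$ and use the translation invariance of $\varphi$ and of the law of $\eta$ to pull out the integration over $x_1 \in W_r$. By the localization above, the resulting inner integrand coincides with its counterpart in which $W_r - x_1$ is replaced by $\mathbb{R}^d$ whenever the compact set $K := K_{m_1} \cap \hat{K}_2 \cap \dots \cap \hat{K}_n$ is contained in $W_r - x_1$. The exceptional $x_1 \in W_r$ lie in a boundary shell whose $\lambda_d$-measure is of order $V_d(W_r)^{(d-1)/d}$, so dividing by $V_d(W_r)$ and letting $r\to\infty$ produces the pointwise limit stated in the theorem.

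The main obstacle will be the dominated-convergence step: I need a uniform-in-$r$ majorant $T_n(r)/V_d(W_r) \le a_n$ with $\sum_{n \ge 1} a_n/n! < \infty$. Starting from the pointwise bound above, Jensen's inequality $(\E D^n\varphi)^2 \le \E[(D^n\varphi)^2]$, the estimate \eqref{eq:ViVj} to bound each $V_k$ by a power of $V_d$, and \eqref{eq:int_Acapk1capkn} to perform the iterated integrations, together lead to a bound of the shape $T_n(r)/V_d(W_r) \le C^n \int_\MM \bigl(\sum_{k=0}^{d} V_k(K_m)\bigr)^{2} \Q(dm)$ for a constant $C$ depending on $d,\gamma,u,\varphi$ and $W$. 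The second-moment hypothesis \eqref{Assumption_fourth_moment} with $k=2$ then renders this integral finite and delivers the required summable majorant, completing the proof.
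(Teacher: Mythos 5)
Your overall architecture (Fock space representation, localization of $D^n$ to $\hat K_1\cap\dots\cap\hat K_n$, translation to pull out the $x_1$-integral, boundary-shell estimate, dominated convergence in $n$) matches the paper's proof. However, there is a genuine gap at the single most delicate point: the claimed pointwise estimate
\begin{align*}
\bigl|D^n_{\hat{x}_1,\dots,\hat{x}_n}\varphi(Z_u \cap W_r)\bigr| \le c(n,\varphi)\sum_{k=0}^{d} V_k\bigl(\hat{K}_1 \cap \dots \cap \hat{K}_n\bigr)
\end{align*}
does not follow from local boundedness \eqref{eq:locallybounded} and Steiner's formula, and is in fact false as a deterministic bound. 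Local boundedness only controls $\varphi$ on a single convex body inside a unit cube; to bound $\varphi(Z_u\cap Q)$ for a cube $Q$ you must decompose $Z_u\cap Q$ into convex pieces and apply inclusion--exclusion, which costs a factor of order $2^{N}$ where $N$ is the number of pieces. For this model $N$ is a random variable that can be as large as $2^{\eta([Q]^*)}-1$, it is not controlled by the deterministic quantity $\sum_k V_k(\hat K_1\cap\dots\cap\hat K_n)$, and $\E[2^{N(Z_u\cap C^d)}]$ can even be infinite (Example \ref{example:notstandard}); this is exactly why $Z_u$ fails to be a standard random set and why the Boolean-model argument does not transfer verbatim.

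The paper resolves this with the dynamic cube decomposition: one refines a cube only where more than $L$ supports cross its boundary, obtaining the random index set $I_{K,L}$, proves the tail bound of Lemma \ref{lemma_prob_exp}, and deduces in Lemma \ref{lemma:V_i and I} the bound $|\varphi(Z_u(\eta_A)\cap K)|\le C2^{2^{L+T}}|I_{K,L}|$ together with moment bounds $\E[|I_{K,L}|^{k}]\le C V_d(K^{\sqrt d})^{k}$. The resulting control \eqref{eq:E[D^n]} is therefore a bound on \emph{moments} of the difference operator, not a pointwise bound, and it is this moment bound (via Jensen and Remark \ref{rema:difference operator}) that feeds both the dominated-convergence majorant and the boundary-shell error term. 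Without this device (or an equivalent one) your proposal cannot even establish that $\E[(D^n\varphi)^2]$ is finite, so the Fock space series and the dominated convergence step remain unjustified. A secondary, fixable inaccuracy: the measure of the exceptional boundary shell is not uniformly $O(V_d(W_r)^{(d-1)/d})$ but depends on $K_{m_1}$ through \eqref{eq:lambda_A_1+xcappartialA_2}; one must integrate this dependence against $\Q$ using the second-moment assumption, as the paper does via \eqref{eq:frationV_iV_d}.
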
 
	The question remains under which conditions $\sigma_0>0$. In \cite[Section 4]{HLS16} this question was discussed for the special case of the Boolean model in a multivariate setting and for example in \cite{BST12,L19,LPY22,ST22} different conditions on the kernel functions were introduced to show the positivity of the asymptotic variance and derive central limit theorems for functionals like the volume of Poisson shot noise processes. With a similar argument as used in \cite[Section 5]{ST22} for the volume we can show for different families of kernel functions that the asymptotic variance is positive for specific geometric functionals.
	\begin{prop}\label{prop: variance positive}
		Let $\varphi$ be a geometric functional, $u>0$ and let $\sigma_0$ be defined as in Theorem \ref{thm:variance_limit}. Assume \eqref{Assumption_fourth_moment} for $k=2$.
		\begin{enumerate}
			\item [a)] If $\Q(\{m\in\MM:\varphi(\{x\in \mathbb{R}^d:g_m(x)\geq u\})\neq0\})>0$, it holds $\sigma_0>0.$ \label{prop_a}
			\item [b)] Assume that there exists $\alpha=(\alpha_0,\dots,\alpha_d)\in\mathbb{R}^d\backslash\{\mathbf{0}\}$ satisfying
			\begin{align}\label{Assumption:phi}
				\varphi=\sum_{i=0}^{d}\alpha_iV_i.
			\end{align} Then, if $\Q(\{m\in\MM:\max_{x\in\mathbb{R}^d}g_m(x)\leq u, g_m \text{ strictly concave on }K_m\})>0$, it holds $\sigma_0>0.$\label{prop_b}
		\end{enumerate}
	\end{prop}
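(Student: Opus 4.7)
By the Fock-space representation of $\sigma_0$ in Theorem~\ref{thm:variance_limit}, $\sigma_0$ is a sum over $n\ge 1$ of nonnegative integrals, so positivity reduces to exhibiting some $n$ together with a subset of the integration domain of positive $\Q\otimes\lambda^{n-1}$-measure on which $(\E D^n_{(\mathbf 0,m),\hat x_2,\dots,\hat x_n}\varphi(Z_u\cap K))^2>0$. In both parts I would select $n$ and the auxiliary parameters appropriately, then evaluate the $n$-th difference operator on the positive-probability event that no point of $\eta$ has its grain support meeting $K$; on this event $f_\eta\equiv 0$ on $K$ and the computation becomes explicit.

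For part (a), take $n=1$ and pick $m$ in the positive-$\Q$-measure hypothesis set $A=\{m:\varphi(\{g_m\ge u\})\neq 0\}$. Using the additivity of $\varphi$ applied to the disjoint decomposition
\[
Z_u(\eta+\delta_{(\mathbf 0,m)})\cap K_m=\bigl(Z_u(\eta)\cap K_m\bigr)\cup\bigl\{y\in K_m:f_\eta(y)<u\le f_\eta(y)+g_m(y)\bigr\},
\]
one gets $D_{(\mathbf 0,m)}\varphi(Z_u\cap K_m)=\varphi(B_m(\eta))$, with $B_m(\eta)$ the polyconvex ``newly excursioned'' set on the right. On the event that no point of $\eta$ has grain meeting $K_m$ (non-negligible because $\E[\eta(S_{K_m})]<\infty$ by \eqref{Assumption first moment} and the Steiner bound \eqref{eq:steiner}, so this event has positive probability $e^{-\E[\eta(S_{K_m})]}$), $B_m(\eta)=\{g_m\ge u\}$ and the conditional value of $D$ is the nonzero quantity $\varphi(\{g_m\ge u\})$.

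For part (b), $\max_x g_m(x)\le u$ renders single-grain excursions null, so the $n=1$ term may vanish. Instead, pick $m$ with $g_m$ strictly concave and $\max g_m>0$ in the hypothesis set, let $n_0$ be the smallest integer with $n_0\max g_m>u$, and take $\hat x_j=(x_j,m)$ for $j=2,\dots,n_0$ with $x_j$ close to $\mathbf 0$. Strict concavity forces the excursions from any proper subset of the $n_0$ added points to be empty for generic $x_j$, so on the no-interaction event the inclusion–exclusion expansion of $D^{n_0}$ collapses to the single non-vanishing term $\varphi(E(x_2,\dots,x_{n_0}))$, where $E(x_2,\dots,x_{n_0})=\{y\in K:\sum_{j=1}^{n_0}g_m(y-x_j)\ge u\}$ is a strictly convex compact set. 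In the limit $x_j\to\mathbf 0$, $E$ converges to $\{g_m\ge u/n_0\}$, which is full-dimensional; using $\varphi=\sum_{i=0}^d\alpha_iV_i$ with $\alpha\neq\mathbf 0$ and the different homogeneity degrees of the $V_i$, one concludes via a scaling/perturbation argument that $\varphi(E(\cdot))$ cannot vanish identically on a neighbourhood of $\mathbf 0$.

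The hardest step in both parts is the same: ruling out a perfect cancellation in the unconditional $\E D^n$ between the non-zero conditional contribution on the no-interaction event identified above and the residual contribution from configurations in which $\eta$ does interact with $K$. Following the strategy of Section~5 in \cite{ST22}, I would handle this by restricting the configuration parameters to a sufficiently localised regime and combining the uniform local bound \eqref{eq:locallybounded} for $|\varphi|$ on unit cubes with the first-moment condition \eqref{Assumption first moment} to make the residual contribution strictly dominated in absolute value by the main one.
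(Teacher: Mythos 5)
Your plan attacks the Fock space series for $\sigma_0$ directly, and you correctly identify its Achilles heel: the terms are of the form $(\E D^n\varphi)^2$, so you must rule out cancellation \emph{inside} the expectation between the no-interaction event and the configurations where $\eta$ does hit $K$. The fix you propose --- making the residual contribution ``strictly dominated in absolute value by the main one'' via localisation, \eqref{eq:locallybounded} and \eqref{Assumption first moment} --- does not work, because there is no small parameter available: the probability that $\eta$ interacts with $K_m$ is $1-e^{-\lambda(S_{K_m})}$, a fixed constant, and on such configurations $D_{\hat x}\varphi$ takes values of the same order as the main term (for a signed functional such as the Euler characteristic, possibly of opposite sign), so the residual is generically comparable to, not dominated by, the contribution you isolate; restricting the configuration parameters shrinks the measure of the good set but not the interacting contribution relative to it. This is precisely why the paper does not argue through $(\E D^n)^2$ at all: it invokes the generalised reverse Poincar\'e inequality, Theorem \ref{thm:lowervarbound}, which lower-bounds $\Var[F]$ by a multiple of $\E\int (D^{k}_{x_1,\dots,x_k}F)^2\;\mu^k(\mathrm{d}(x_1,\dots,x_k))$ --- the \emph{second moment} of the difference operator, with the expectation outside the square. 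There one only needs $D^k\varphi\neq 0$ on an event of positive probability, for which the no-interaction event suffices and cancellation is irrelevant; the price is verifying the ratio condition \eqref{Assumption_variance_theoretisch} between orders $k$ and $k+1$, which follows from Lemma \ref{lemma:diff_operator} and \eqref{eq:int_V_d A_1+xcapA_2}. Without this (or an equivalent device) your argument has a genuine gap at the step you yourself flag as hardest.

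Part b) has further unresolved points even granting the above. You fix a single mark $m$ and set $n_0=\min\{n:n\max g_m>u\}$, but the relevant integral runs over $\MM^{n}$ and needs a set of marks of positive $\Q$-measure on which the same $n_0$ works; the paper obtains this via the push-forward of $m\mapsto\max_x g_m(x)$ and \cite[Lemma 1.19]{K21}, and must then split into cases according to whether $u/v$ is an integer --- in the integer case proper subsets of the added points produce degenerate one-point excursion sets that have to be excluded on a set of full measure, and the sub-case $t_{\min}=0$, $v=u$ reduces to part a). Finally, ``$\varphi(E)$ cannot vanish identically near $\mathbf 0$'' is weaker than what is needed: the paper shows via Lemma \ref{lemma:geometric lemma} that there is a positive-measure set of translations with $V_{t_{\min}}(\widetilde{Z}_u(\mathbf x))\in(0,R)$ for arbitrarily small $R$, and then uses \eqref{eq:ViVj} to obtain the quantitative bound $\lvert\varphi(\widetilde{Z}_u(\mathbf x))\rvert\geq\tfrac{\alpha_{t_{\min}}}{2}V_{t_{\min}}(\widetilde{Z}_u(\mathbf x))>0$ there, which is what actually feeds into the lower bound on the integrated second moment.
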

	\begin{rema}
		Proposition \ref{prop: variance positive} mainly distinguishes the cases $\max_{x\in\mathbb{R}^d}g_m(x)\geq u$ and $\max_{x\in\mathbb{R}^d}g_m(x)\leq u$ for all $m\in M$ and some $M\subseteq\MM$ satisfying $\Q(M)>0$. In the first case, our proof works for general geometric functionals if $\varphi(\{x\in \mathbb{R}^d:g_m(x)\geq u\})\neq0$ for enough $m\in M$. In the second case we can show the positivity of the variance for geometric functionals that can be represented as a linear combination of intrinsic volumes if $g_m$ is strictly concave for enough $m\in M$. Note that by Hadwiger's theorem (see for example \cite[Theorem 14.4.6]{SW08}), this class of geometric functionals includes all geometric functionals that are continuous and invariant under rigid motions.
	\end{rema}
	Finally we derive a qualitative central limit theorem and corresponding quantitative central limit theorems in Wasserstein and Kolmogorov distance under different moment assumption if the asymptotic variance is positive.
	\begin{theorem}\label{thm:clt}
		Let $\varphi$ be a geometric functional and let $u>0$. Denote by $N$ a standard Gaussian random variable, let $\sigma_0$ be defined as in Theorem \ref{thm:variance_limit} and assume $\sigma_0>0$. 
		\begin{enumerate}
			\item[a)] If \eqref{Assumption_fourth_moment} is fulfilled for $k=2$, it holds
			\begin{align*}
				\frac{\varphi(Z_u\cap W_r)-\E[\varphi(Z_u\cap W_r)]}{\sqrt{\Var[\varphi(Z_u\cap W_r)]}}\overset{d}{\longrightarrow} N \quad\text{ as }\quad r\to\infty.
			\end{align*}
			\item[b)] If \eqref{Assumption_fourth_moment} is fulfilled for $k=3$, there exists a constant $C>0$, depending on $\gamma, d$ and the first three moments in \eqref{Assumption_fourth_moment}, such that  
			\begin{align*}
				d_W\Bigg(\frac{\varphi(Z_u\cap W_r)-\E[\varphi(Z_u\cap W_r)]}{\sqrt{\Var[\varphi(Z_u\cap W_r)]}},N\Bigg)\leq \frac{C}{\sqrt{V_d(W_r)}}
			\end{align*}
			for $r$ large enough.
			\item[c)] If \eqref{Assumption_fourth_moment} is fulfilled for $k=4$, there exists a constant $C>0$, depending on $\gamma, d$ and the first four moments in \eqref{Assumption_fourth_moment}, such that  
			\begin{align*}
				d_K\Bigg(\frac{\varphi(Z_u\cap W_r)-\E[\varphi(Z_u\cap W_r)]}{\sqrt{\Var[\varphi(Z_u\cap W_r)]}},N\Bigg)\leq \frac{C}{\sqrt{V_d(W_r)}}
			\end{align*}
			for $r$ large enough.
		\end{enumerate}
	\end{theorem}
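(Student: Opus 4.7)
\textbf{Proof plan for Theorem \ref{thm:clt}.} Set $F_r=\varphi(Z_u\cap W_r)$ and let $\tilde F_r=(F_r-\E[F_r])/\sqrt{\Var[F_r]}$. By Theorem \ref{thm:variance_limit} and the assumption $\sigma_0>0$, we have $\Var[F_r]\geq c\,V_d(W_r)$ for all $r$ large enough and some $c>0$. The strategy for all three parts is to apply the Malliavin--Stein bounds from Theorem \ref{thm:clt_theory_Wasserstein} and Theorem \ref{thm:clt_kolmogorov_theory} to $\tilde F_r$ and show that each of the quantities $\gamma_1,\dots,\gamma_6$ is of order $V_d(W_r)^{-1/2}$, which will then yield a) as a consequence of b) and give the quantitative rates in b) and c). The key analytic input is uniform control of the moments $\E[|D_{\hat x}F_r|^p]$ and $\E[|D^2_{\hat x_1,\hat x_2}F_r|^p]$ of the difference operators for $p\in\{2,3,4\}$, together with sharp geometric localisation of their supports.

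The first step is to bound the first-order difference operator. Adding a point $\hat x=(x,m)$ can only change $Z_u$ inside $K(\hat x)=K_m+x$, so by additivity, the inclusion--exclusion representation in Proposition \ref{lemma E polyconvex}, and local boundedness \eqref{eq:locallybounded}, one can control $|D_{\hat x}F_r|$ by a constant times $M_\varphi\cdot 2^{\eta(S_{W_r}\cap A(\hat x))}\cdot N_{\mathrm{cubes}}(K(\hat x)\cap W_r)$, where $A(\hat x)$ denotes the set of marks whose grain meets $K(\hat x)$ and $N_{\mathrm{cubes}}$ counts unit cubes intersecting the argument. Using Steiner's formula \eqref{eq:steiner}, Mecke's formula, and the moment condition \eqref{Assumption_fourth_moment} up to order $k$, these exponential factors reduce to a finite integral in $m$ of a polynomial in the intrinsic volumes $V_j(K_m)$. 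More precisely, following the computational scheme of \cite[Sections 6--7]{HLS16} adapted to our non-standard setting, I would first prove the pointwise-in-expectation bound $\E[|D_{\hat x}F_r|^p]\leq C_p\sum_{j=0}^d V_j(K_m+W)^{\alpha_{j,p}}$ using the same type of indicator decomposition as in \eqref{eq_XI}, and verify that the exponential term in $\eta$ is tame because the configuration in $A(\hat x)$ is Poisson with finite intensity independent of $r$.

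The second step is the analogous bound for $D^2_{\hat x_1,\hat x_2}F_r$, which is where the proof is substantially more delicate than for the Boolean model. Here the crucial geometric fact is that $D^2_{\hat x_1,\hat x_2}F_r=0$ as soon as the sets $K(\hat x_1)$ and $K(\hat x_2)$ do not interact through the configuration, and more precisely the second difference is supported geometrically in $K(\hat x_1)\cap K(\hat x_2)$: any cell $X_I$ that is modified by the two additions must be a subset of this intersection by \eqref{eq_XI}. Combined with the translative integral formula \eqref{eq:int_V_d A_1+xcapA_2} and \eqref{eq:int_Acapk1capkn}, this yields after integration over $x_3$ the key localisation
$$\int \E[(D^2_{\hat x_1,\hat x_3}F_r)^2(D^2_{\hat x_2,\hat x_3}F_r)^2]\,\lambda(\mathrm d\hat x_3)\leq C\,\mathbf{1}\{K(\hat x_1)\cap K(\hat x_2)\neq\emptyset\}\cdot P(m_1,m_2),$$
with $P$ a polynomial in the relevant intrinsic volumes whose integrability is guaranteed by \eqref{Assumption_fourth_moment} for the appropriate $k$. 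This geometric localisation, which I expect to be the main obstacle because the cells $X_I$ may depend on arbitrarily many points and must be tracked through the inclusion--exclusion formula simultaneously for both added points, is the replacement for the very simple grain-intersection argument available in the Boolean case.

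With these two ingredients in hand, the conclusion is routine integration: $\gamma_1$ and $\gamma_2$ involve the triple integral which after the localisation reduces to $\int_{\MM^2}(\cdots)\,V_d(W_r)\,\Q^2(\mathrm d(m_1,m_2))\leq C\,V_d(W_r)$; $\gamma_3, \gamma_4, \gamma_5$ involve a single integral in $\hat x$ which by Mecke and stationarity gives $O(V_d(W_r))$; and $\gamma_6$ is handled analogously to $\gamma_1,\gamma_2$. Dividing by $\Var[F_r]^{p/2}\geq (cV_d(W_r))^{p/2}$ for the appropriate power $p\in\{1,3/2,2\}$ coming from the normalisation of each $\gamma_i$, each term is shown to be at most $C\,V_d(W_r)^{-1/2}$. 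Part a) then follows from part b), and parts b) and c) follow directly from Theorems \ref{thm:clt_theory_Wasserstein} and \ref{thm:clt_kolmogorov_theory}. The moment assumption \eqref{Assumption_fourth_moment} for $k=2,3,4$ in the three parts precisely matches the highest moment of $D^kF_r$ that appears in the bounds $\gamma_i$ needed for each distance.
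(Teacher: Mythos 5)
Your overall architecture coincides with the paper's: normalise $F_r$, apply Theorems \ref{thm:clt_theory_Wasserstein} and \ref{thm:clt_kolmogorov_theory}, localise the $n$-th difference operator to the grain intersection $\bigcap_j\hat K_j$ (this is exactly \eqref{eq:D^n} and \eqref{eq:E[D^n]} in Lemma \ref{lemma:diff_operator}), and integrate via \eqref{eq:int_V_d A_1+xcapA_2}. However, there are two genuine gaps. First, your claim that part a) follows from part b) is false: part a) assumes only second moments ($k=2$) while part b) needs third moments, and the term $\gamma_3=\int\E[|D_{\hat x}F|^3]\,\mu(\mathrm d\hat x)$ genuinely requires $\int_\MM V_d(K_m^{\sqrt d})^3\,\Q(\mathrm dm)<\infty$. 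The paper proves a) separately using the alternative term $\widetilde\gamma_3$ from \cite[Theorem 3.1]{BPT20} together with a truncation of $\{f(K_{m_1})\leq t\}$ versus $\{f(K_{m_1})>t\}$, exploiting Jensen's inequality so that only second moments enter; without this (or an equivalent truncation) your part a) does not close under the stated hypothesis.

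Second, your proposed moment bound for $D_{\hat x}F_r$ via a factor $2^{\eta(S_{W_r}\cap A(\hat x))}$, with the assertion that this exponential is ``tame,'' does not survive scrutiny. The inclusion--exclusion over the cells $X_I$ restricted to a unit cube in which $n$ grains partially interfere produces up to $2^n$ convex pieces and hence up to $2^{2^n}$ terms each bounded by $M_\varphi$ --- a doubly exponential cost whose expectation over a Poisson number of interfering grains is infinite (this is precisely the content of Example \ref{example:notstandard}: $\E[2^{N(Z_u\cap C^d)}]=\infty$ is possible). Even a single-exponential factor $2^{\eta(\cdot)}$ would, after integrating over the mark $m$, demand exponential moments of $V_k(K_m)$, which \eqref{Assumption_fourth_moment} does not provide. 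The paper's resolution is the dynamic cube decomposition of Lemma \ref{lemma:V_i and I}: cubes are subdivided until at most a fixed deterministic number $L$ of grains partially hit each cube, so the local inclusion--exclusion cost is the constant $2^{2^L}$ and the random quantity is the number of cubes $|I_{K,L}|$, whose moments Lemma \ref{lemma_prob_exp} controls polynomially in $V_d(K^{\sqrt d})$. Your plan needs this (or an equivalent device) to make the first and second steps rigorous; the remaining integrations in your third step are indeed routine and match the paper.
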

	Similar results as in Theorem \ref{thm:variance_limit} and Theorem \ref{thm:clt} were already shown for geometric functionals of the Boolean model and specific geometric functionals of excursion sets of Poisson shot noise processes. The analogous results for the Boolean model and for Poisson cylinder processes can be found in \cite[Theorem 3.1 and Theorem 9.3]{HLS16} and in \cite[Theorem 3.5 and Theorem 3.10]{BST22}. For Poisson shot noise processes there exist various results for different functionals under several model assumptions. For example, for the volume or smoothed versions of the volume of excursion sets of Poisson shot noise processes central limit theorems were shown in \cite[Proposition 3.2.1]{BST12}, \cite[Theorem 4.1]{L19} and \cite[Theorem 4.3]{LPY22} under different conditions on the kernel functions.

	\section{Proofs}
	Similarly to the case of the Boolean model treated in \cite{HLS16} or the case of Poisson cylinder processes in \cite{BST22}, the following proofs are based on the Fock space representation and the Malliavin-Stein method. In particular, the proof for the variance asymptotics in Theorem \ref{thm:variance_limit} follows the strategy of the proof of \cite[Theorem 3.1]{HLS16} and the proof for the central limit theorems in Theorem \ref{thm:clt} uses similar arguments as the proof of \cite[Theorem 3.5]{BST22}. The main difference and difficulty compared to the proofs in \cite{BST22, HLS16} is that $\E[2^{N(Z_u\cap C^d)}]$ is in the case of Poisson shot noise processes not necessarily finite as explained in Section \ref{sec:boolean_model}. For this reason, for a set $K\in\mathcal{K}^d$ we use here a dynamic decomposition, which depends on the number of points whose corresponding supports hit but do not cover this set. Therefore, we define $\eta([Q]^*)$ for $Q\in\mathcal{K}^d$ as the number of points $\hat{x}\in\eta$ for which $K(\hat{x})\cap Q\notin\{\emptyset, Q\}$.
	
	For $n\in\mathbb{N}_0$ and $z\in\mathbb{Z}^d$ let $Q_{n,z}=2^{-n}(z+[0,1]^d)$ be the cube of side length $2^{-n}$ and left lower corner $2^{-n}z$. We start covering $K$ with $\bigcup_{z\in\mathbb{Z}^d:Q_{0,z}\cap K\neq \emptyset}Q_{0,z}$. Then, for $n=0,1,2,\dots$,  if $\eta([Q_{n,z}]^*)> L$ for a fixed constant $L\in\mathbb{N}$, we iteratively decompose $Q_{n,z}$ in $2^d$ cubes $Q_{n+1,p_1},\dots, Q_{n+1,p_{2^d}}$ with $\{p_1,\dots,p_{2^d}\}=\{ 2z+\sum_{j\in J}e_j\text{ for } J\subseteq\{1,\dots,d\}\}$, where $e_j$ denotes the unit vector in direction $j$.
	Thus, at the end we cover $K$ with a dynamic grid of cubes, i.e. we consider
	\begin{align*}
		\bigcup_{(n,z)\in I_{K,L}}Q_{n,z}
	\end{align*}
	with
	\begin{align*}
		I_{K,L}=\{(n,z)\in\mathbb{N}_0\times \mathbb{Z}^d: Q_{n,z}\cap K\neq \emptyset, \eta([Q_{n-1,\lfloor z/2\rfloor}]^*)> L, \eta([Q_{n,z}]^*)\leq L \},
	\end{align*}
	with $\eta([Q_{-1,\lfloor z/2\rfloor}]^*)=L+1$ and $\lfloor z/2\rfloor=(\lfloor z_1/2\rfloor,\dots, \lfloor z_d/2\rfloor)$ for $z=(z_1,\dots,z_d)\in\mathbb{Z}^d$, where $\lfloor \cdot\rfloor$ stands for the floor function. For $(m,p)\in I_{K,L}$ let 
	\begin{align*}
		N(m,p)=\{(n,z)\in I_{K,L}: Q_{m,p}\cap Q_{n,z}\neq \emptyset, n\leq m\}.
	\end{align*}
	Then, the number of adjacent cubes $\lvert N(m,p)\rvert$ which are not smaller than the current cube itself is bounded by $3^d$ for geometric reasons.
	
	\subsection{Expectation}
	We use the following lemma to control the probability that $(n,z)\in I_{K,L}$ for $n\in\mathbb{N}$ and $z\in\mathbb{Z}^d$.
	\begin{lemma}
		\label{lemma_prob_exp}
		For every $L\in\mathbb{N}$ there exists a constant $C>0$, only depending on the first moment in \eqref{Assumption first moment}, $d$, $L$ and $\gamma$, such that for all  $n\in\mathbb{N}$ and $z\in\mathbb{Z}^d$,
		\begin{align*}
			\p( \eta([Q_{n-1,\lfloor z/2\rfloor}]^*)> L, \eta([Q_{n,z}]^*)\leq L)\leq C2^{-n(L+1)}.
		\end{align*}
	\end{lemma}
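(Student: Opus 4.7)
The plan is to discard the second event $\{\eta([Q_{n,z}]^*)\leq L\}$ and apply a Poisson tail bound, after establishing that the mean of $\eta([Q^*]^*)$ is $O(2^{-n})$. Writing $Q^*:=Q_{n-1,\lfloor z/2\rfloor}$ for brevity, the set
\[ [Q^*]^*=\{(x,m)\in\mathbb{R}^d\times\MM:(K_m+x)\cap Q^*\notin\{\emptyset,Q^*\}\} \]
is measurable by the measurability of $m\mapsto K_m$ recalled in Section~\ref{section1.2}, so $\eta([Q^*]^*)$ is Poisson distributed with parameter $\mu:=\lambda([Q^*]^*)$. The standard factorial-moment estimate $\p(X\geq L+1)\leq\mu^{L+1}/(L+1)!$ then gives
\[ \p(\eta([Q^*]^*)>L,\eta([Q_{n,z}]^*)\leq L)\leq \p(\eta([Q^*]^*)>L)\leq \frac{\mu^{L+1}}{(L+1)!}, \]
so it remains to show $\mu\leq\widetilde{C}\cdot 2^{-n}$ for a constant $\widetilde{C}$ depending on $d$, $\gamma$, $L$ and the first moments in \eqref{Assumption first moment}.

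The key geometric observation is: if $(K_m+x)$ straddles $Q^*$ (i.e.\ hits $Q^*$ without containing it), then the translated boundary $\partial K_m+x$ necessarily intersects $Q^*$. This follows from a short connectedness argument: pick $y_1\in(K_m+x)\cap Q^*$ and $y_2\in Q^*\setminus(K_m+x)$; the segment $[y_1,y_2]$ lies in the convex set $Q^*$ and runs from a point of $K_m+x$ to a point outside it, so it crosses $\partial(K_m+x)=\partial K_m+x$ at a point still in $Q^*$. Writing $S_m:=\{x:(K_m+x)\text{ straddles }Q^*\}$ and using that $-Q^*$ is contained in a ball of radius $r_n:=\sqrt{d}\,2^{-n+1}$ (since $Q^*$ has side length $2\cdot 2^{-n}$), this inclusion gives
\[ V_d(S_m)\leq V_d(\partial K_m\oplus(-Q^*))\leq V_d(K_m\oplus B(\mathbf{0},r_n))-V_d(K_m\ominus B(\mathbf{0},r_n)). \]
The crucial feature of this difference is that the leading $V_d(K_m)$ terms cancel: Steiner's formula \eqref{eq:steiner} applied to the outer parallel body together with the analogous expansion for the inner parallel body yields a bound of the form $C_d\sum_{k=0}^{d-1}r_n^{d-k}\kappa_{d-k}V_k(K_m)$. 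Since $r_n\leq\sqrt{d}$ for $n\geq 1$, each factor $r_n^{d-k}$ with $d-k\geq 1$ is bounded by a constant multiple of $2^{-n}$, so $V_d(S_m)\leq C_d'\cdot 2^{-n}\sum_{k=0}^{d}V_k(K_m)$.

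Integrating against $\gamma\,\Q(\mathrm{d}m)$ and invoking \eqref{Assumption first moment} (together with $V_0(K_m)\equiv 1$) then yields $\mu\leq\widetilde{C}\cdot 2^{-n}$, and combining with the Poisson tail bound above gives $C\cdot 2^{-n(L+1)}$ with $C=\widetilde{C}^{L+1}/(L+1)!$. The main obstacle is the geometric estimate on $V_d(S_m)$: a direct application of \eqref{eq:lambda_A_1+xcappartialA_2} with $K_2=Q^*$ is insufficient, because the term $V_0(Q^*)=1$ in $\sum_{k=0}^{d-1}V_k(Q^*)$ does not decay in $n$. The point is to exploit that ``straddling'' is strictly stronger than ``hitting $Q^*$'' (it forces the \emph{boundary} of $K_m+x$ to cross $Q^*$) and to use the cancellation between outer and inner Minkowski modifications of $K_m$ to extract the additional $O(2^{-n})$ factor.
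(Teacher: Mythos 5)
Your proof is correct and takes essentially the same route as the paper's: both discard the event $\{\eta([Q_{n,z}]^*)\leq L\}$, observe that straddling forces $\partial K_m+x$ to meet the cube so that the relevant set of translations lies in an $O(2^{-n})$-tube around $\partial K_m$, bound that tube by twice the outer parallel set via convexity and Steiner's formula to get intensity measure $O(2^{-n})$ after integrating against $\gamma\,\Q$, and conclude with a Poisson tail estimate of order $\mu^{L+1}$. The only (immaterial) differences are your use of the factorial-moment bound $\p(X\geq L+1)\leq\mu^{L+1}/(L+1)!$ in place of the paper's explicit summation of the Poisson series, and your phrasing of the tube bound as $V_d(K_m\oplus B)-V_d(K_m\ominus B)$ rather than the paper's inner/outer split via \cite[Eq.\ (3.19)]{HLS16}.
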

	\begin{proof}
		For $n\in\mathbb{N}$ and $z\in\mathbb{Z}^d$ define 
		\begin{align*}
			P(n,z)&=\{\hat{x}\in \mathbb{R}^d\times \MM: Q_{n-1,z}\cap K(\hat{x})\notin\{\emptyset,Q_{n-1,z}\}\}.	
		\end{align*}
		Let $c_{n,z}$ denote the centre of $Q_{n-1,z}$, $\partial A$ the boundary of a set $A\subset\mathbb{R}^d$ and  $\mathrm{d}(A,y)$ the distance of a set $A\subseteq \mathbb{R}^d$ to a point $y\in\mathbb{R}^d$. Then, $Q_{n-1,z}\cap K(\hat{x})\notin\{\emptyset,Q_{n-1,z}\}$ implies that $\mathrm{d}(\partial K(\hat{x}),c_{n,z})\leq\frac{\sqrt{d}}{2^{n}}$, because $\partial K(\hat{x})$ has to intersect $Q_{n-1,z}$. Therefore, since $K(\hat{x})=K_{m}+x$ for $\hat{x}=(x,m)$,
		\begin{align*}
			\lambda(P(n,z))&
			\leq \gamma\int_\MM\lambda_d\Big(\Big\{x\in \mathbb{R}^d:\mathrm{d}(\partial K(\hat{x}),c_{n,z})\leq \frac{\sqrt{d}}{2^{n}}\Big\}\Big)\;\mathbb{Q}(\mathrm{d}m)\nonumber\\
			&= \gamma\int_\MM\lambda_d\Big(\Big\{x\in \mathbb{R}^d: \mathrm{d}(\partial K_{m},c_{n,z}-x)\leq \frac{\sqrt{d}}{2^{n}}\Big\}\Big)\;\mathbb{Q}(\mathrm{d}m)\\
			&= \gamma\int_\MM\lambda_d\Big(\Big\{x\in \mathbb{R}^d: \mathrm{d}(\partial K_{m},x)\leq \frac{\sqrt{d}}{2^{n}}\Big\}\Big)\;\mathbb{Q}(\mathrm{d}m),
		\end{align*}
		where the last step uses the translation invariance of the volume.
		For a fixed $m\in\MM$ it holds by \cite[Equation (3.19)]{HLS16} because of the convexity of $K_{m}$,
		\begin{align*}
			\lambda_d\Big(\Big\{x\in K_{m}:\mathrm{d}(\partial K_{m},x)\leq \frac{\sqrt{d}}{2^{n}}\Big\}\Big)&\leq \lambda_d\Big(\Big(K_{m}+B^d\Big(\mathbf{0},\frac{\sqrt{d}}{2^{n}}\Big)\Big)\Big\backslash K_{m}\Big)\\&= \lambda_d\Big(\Big\{x\in \mathbb{R}^d\backslash K_{m}:\mathrm{d}(\partial K_{m},x)\leq \frac{\sqrt{d}}{2^{n}}\Big\}\Big).
		\end{align*}
		Together with Steiner's formula \eqref{eq:steiner} we get
		\begin{align}
			\lambda(P(n,z))
			&\leq 2\gamma\int_\MM \lambda_d\Big(\Big(K_{m}+B^d\Big(\mathbf{0},\frac{\sqrt{d}}{2^{n}}\Big)\Big)\Big\backslash K_{m}\Big)\;\mathbb{Q}(\mathrm{d}m)\nonumber\\
			&=2\gamma\sum_{k=0}^{d-1}\Big(\frac{\sqrt{d}}{2^{n}}\Big)^{d-k}\kappa_{d-k}\int_\MM V_k(K_{m})\;\mathbb{Q}(\mathrm{d}m)\leq c2^{-n}\label{eq:lambda(A_1(n))}		
		\end{align}
		for a suitable constant $c>0$, which might depend on the first moment in \eqref{Assumption first moment}, $d$ and $\gamma$.
		This provides
		\begin{align*}
			\p(\eta([Q_{n-1,\lfloor z/2\rfloor}]^*)>L, \eta([Q_{n,z}]^*)\leq L)
			&\leq\p(\eta(P(n,z))> L)\\
			&= \sum_{ s\geq L+1}\p(\eta(P(n,z))=s)\nonumber\\
			&=\sum_{ s\geq L+1}\frac{\lambda(P(n,z))^{s}}{s!}e^{-\lambda(P(n,z))}\\
			&\leq \lambda(P(n,z))^{L+1}\sum_{ s\geq L+1}\frac{c^{s-L-1}}{s!},
		\end{align*}
		which shows together with \eqref{eq:lambda(A_1(n))} the lemma.
	\end{proof}
	
	In the following lemma we use our decomposition to bound moments of geometric functionals of excursion sets. Since we need an extended version of this lemma for the proof of the variance and the central limit theorem, we provide the following lemma in its generalised version right here. For this we define $Z_u(\xi)=\{y\in\mathbb{R}^d:f_\xi(y)\geq u\}$ for a set $\xi$ of points in $\mathbb{R}^d\times\MM$, where $f_\xi(y)=\sum_{(x,m)\in\xi}g_m(y-x)$.
	\begin{lemma}\label{lemma:V_i and I}
		Let $A=\{\hat{x}_1,\dots,\hat{x}_t\}$ be a set of $t$ points in $\mathbb{R}^d\times\MM$ for some $t\in\mathbb{N}_0$. Denote $\eta_A=\eta+\sum_{j=1}^{t}\delta_{\hat{x}_j}$. Then it holds for $K\in\mathcal{K}^d$ and $L\in\mathbb{N}$,
		\begin{align*}
			\lvert \varphi(Z_u(\eta_A)\cap K)\rvert\leq C 2^{2^{L+T}}\lvert I_{K,L}\rvert
		\end{align*}
		for some constant $C>0$, which may depend on $d$, and $T=\lvert \{j\in\{1,\dots, t\}:\hat{K}_j\cap K\neq\{\emptyset, K\}\}\rvert$. For $K_1,K_2\in\mathcal{K}^d$, $k_1,k_2\in\mathbb{N}_0$ and $L\in\mathbb{N}$ with $L\geq 2d\max\{k_1,k_2\}$ we have
		\begin{align*}
			\E[\lvert I_{K_1,L}\rvert^{k_1}\lvert I_{K_2,L}\rvert^{k_2}]\leq C_{k_1,k_2}V_d\Big(K_1^{\sqrt{d}} \Big)^{k_1}V_d\Big(K_2^{\sqrt{d}} \Big)^{k_2}
		\end{align*}
		for some constant $C_{k_1,k_2}>0$, which may depend on $k_1$, $k_2$, $d$, $\gamma$, $L$ and the first moment in \eqref{Assumption first moment} and where $K_i^{\sqrt{d}}=(K_i)^{\sqrt{d}}$ for $i\in\{1,2\}$.
	\end{lemma}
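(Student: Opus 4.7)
The two bounds are qualitatively different: the first is a structural bound coming from additivity and the inclusion--exclusion formula \eqref{eq:intrinsic volume polyconvex set}, while the second is a moment estimate built on Lemma~\ref{lemma_prob_exp}. I would prove them separately.

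For the first bound, the starting point is that by construction the cubes $(Q_{n,z})_{(n,z)\in I_{K,L}}$ have pairwise disjoint interiors and cover $K$. Using the half-open cube convention \eqref{eq:c0^d} iterated across the different scales of the dynamic grid, additivity and translation invariance of $\varphi$ give
\begin{align*}
\varphi(Z_u(\eta_A)\cap K)=\sum_{(n,z)\in I_{K,L}}\varphi(Z_u(\eta_A)\cap K\cap \tilde Q_{n,z})
\end{align*}
for appropriate half-open versions $\tilde Q_{n,z}\subseteq Q_{n,z}$, with at most a fixed $d$-dependent number of boundary correction terms per cube. It then suffices to bound each summand by a constant times $2^{2^{L+T}}$. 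Inside a cube $Q_{n,z}$, the points whose supports affect the excursion set split into three kinds: the $A$-points with $K(\hat x_j)\supseteq K$ and the $\eta$-points with $K(\hat x)\supseteq Q_{n,z}$ are \emph{full}, contributing a concave function $h$ on $Q_{n,z}$; the $A$-points with $K(\hat x_j)\cap K=\emptyset$ are irrelevant; and the remaining \emph{partial} points number at most $L$ from $\eta$ (by the defining condition $\eta([Q_{n,z}]^*)\leq L$) and at most $T$ from $A$. Repeating the argument of Proposition~\ref{lemma E polyconvex} with $h+\sum_{i\in I}g_{m_{q_i}}$ in place of $\sum_{j\in I}g_{m_j}$ represents $Z_u(\eta_A)\cap K\cap Q_{n,z}$ as a union of at most $2^{L+T}$ compact convex sets. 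Applying inclusion--exclusion \eqref{eq:intrinsic volume polyconvex set} to this representation produces at most $2^{2^{L+T}}$ terms, each bounded by $M_\varphi$ via \eqref{eq:locallybounded} since $Q_{n,z}$ fits in a translate of $[0,1]^d$. Summing over $I_{K,L}$ gives the first inequality.

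For the second bound, Cauchy--Schwarz reduces the joint moment to single moments,
\begin{align*}
\E[|I_{K_1,L}|^{k_1}|I_{K_2,L}|^{k_2}]\leq \E[|I_{K_1,L}|^{2k_1}]^{1/2}\E[|I_{K_2,L}|^{2k_2}]^{1/2},
\end{align*}
so the task becomes $\E[|I_{K,L}|^k]\leq C_k V_d(K^{\sqrt d})^k$ for $L\geq dk$. Expanding the $k$-th moment as a sum over $k$-tuples $((n_i,z_i))_{i=1}^k$, the generalised H\"older inequality $\p(\bigcap_i A_i)\leq \prod_i \p(A_i)^{1/k}$ combined with Lemma~\ref{lemma_prob_exp} gives
\begin{align*}
\p\Big(\bigcap_{i=1}^k\{(n_i,z_i)\in I_{K,L}\}\Big)\leq C\prod_{i=1}^k 2^{-n_i(L+1)/k}.
\end{align*}
Since $Q_{n,z}\cap K\neq\emptyset$ and $n\geq 0$ force $Q_{n,z}\subseteq K^{\sqrt d}$ (the diameter of $Q_{n,z}$ is at most $\sqrt d$), the number of admissible $z$ at scale $n$ is at most $C' V_d(K^{\sqrt d})2^{nd}$. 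Plugging in and factoring the sum over $(n_i,z_i)$ yields a product of $k$ geometric series with ratio $2^{d-(L+1)/k}$, each convergent when $L+1>dk$. This produces the required bound $C_k V_d(K^{\sqrt d})^k$, and reinserting into the Cauchy--Schwarz estimate under $L\geq 2d\max\{k_1,k_2\}$ gives the claim.

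The delicate part is the first bound. The hardest step is the careful bookkeeping of partial points across the two sources $\eta$ and $A$: one must verify that absorbing the full points into a concave function $h$ preserves the hypothesis of Proposition~\ref{lemma E polyconvex} on $K\cap Q_{n,z}$, and that the half-open decomposition across a multi-scale grid does not spoil additivity. The second bound, by contrast, is essentially a clean calculation once one recognises that the right tool is the H\"older inequality on joint probabilities combined with the scale-decay estimate of Lemma~\ref{lemma_prob_exp}.
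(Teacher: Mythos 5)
Your proposal is correct in substance and follows the same overall architecture as the paper (per-cube bound of order $2^{2^{L+T}}$ on the dynamic grid, then Lemma~\ref{lemma_prob_exp} plus Cauchy--Schwarz for the moments), but it deviates at two specific points, in both cases legitimately. For the assembly of the first bound, the paper keeps the closed cubes $Q_{n,z}$, applies inclusion--exclusion over $I_{K,L}$, and controls the number of non-vanishing intersection terms by assigning each term to its finest cube $(m,p)$ and using $\lvert N(m,p)\rvert\leq 3^d$; you instead tile $K$ by disjoint half-open dyadic cubes. Your route works because the dynamic grid is an antichain in the dyadic tree (if $Q'\subseteq Q$ then $\eta([Q']^*)\leq\eta([Q]^*)$, so a good cube has no selected descendant), hence the half-open versions are pairwise disjoint and cover $K$; but it requires extending $\varphi$ additively to these half-open pieces at every scale, a point the paper only sets up for the unit cube in \eqref{eq:c0^d} and which you assert rather than verify --- this is the one place where your plan still owes a (standard but nontrivial) argument that the paper's inclusion--exclusion route avoids entirely. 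Your per-cube counting itself (full points absorbed into a concave background on $K\cap Q_{n,z}$, at most $L$ partial $\eta$-points by $\eta([Q_{n,z}]^*)\leq L$ and at most $T$ partial $A$-points, hence at most $2^{L+T}$ convex pieces and $2^{2^{L+T}}$ inclusion--exclusion terms each bounded by $M_\varphi$) is exactly the paper's. For the moment bound, the paper estimates $\p\bigl(\bigcap_i\{(n_i,z_i)\in I_{K,L}\}\bigr)$ by the probability of the single event with the largest $n_i$, paying a combinatorial factor $2k_s$ and a factor $(n_1+1)^{2k_s-1}2^{2k_sn_1d}$ for the remaining indices; your generalised H\"older bound $\p(\bigcap_iA_i)\leq\prod_i\p(A_i)^{1/k}$ distributes the decay $2^{-n_i(L+1)/k}$ symmetrically and lets the sum factor into $k$ identical geometric series. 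Both yield the same convergence condition $L\geq 2d\max\{k_1,k_2\}$ and the same bound $C_{k_1,k_2}V_d(K_1^{\sqrt d})^{k_1}V_d(K_2^{\sqrt d})^{k_2}$; your version is arguably cleaner, at the cost of noting separately that Lemma~\ref{lemma_prob_exp} only covers $n_i\geq1$ (for $n_i=0$ use the trivial bound $\p\leq1$).
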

	\begin{proof}
		Denote by $\hat{x}_{t+1},\dots,\hat{x}_{t+\eta(S_{K})}$ the points of the Poisson process in $S_{K}$, where $S_{K}=\{\hat{x}\in\hat{\mathbb{R}}^d:K(\hat{x})\cap K\neq \emptyset\}$.
		Let $\emptyset\neq I\subseteq\{1,\dots, t+\eta(S_{K})\}$ and $X_I$ be defined as in \eqref{eq_XI} with the difference that $W_r$ is replaced by $K$. For a set $Q\in\mathcal{K}^d$ with $Q\subseteq K$ and $j\in\{1,\dots,t+\eta(S_{K})\}$ we have $X_{I}\cap Q\subseteq X_{I\cup\{j\}}\cap Q$ if $Q\subseteq \hat{K}_j$ and $X_{I\cup\{j\}}\cap Q=\emptyset$ if $Q\cap \hat{K}_j=\emptyset$. Thus, for $J_1=\{j\in\{1,\dots,t\}:\hat{K}_j\cap Q\neq \{Q,\emptyset\}\}$, $J_2=\{j\in\{t+1,\dots,t+\eta(S_{K})\}:\hat{K}_j\cap Q\neq \{Q,\emptyset\}\}$ and $M=\{j\in\{1,\dots,t+\eta(S_{K})\}:\hat{K}_j\cap Q=Q\}$ we have
		\begin{align*}
			Z_u(\eta_A)\cap Q =\bigcup_{ \substack{I\subseteq J_1\cup J_2:\\ I\cup M\neq\emptyset}}X_{I\cup M}\cap Q.
		\end{align*}
		Under the condition that $\eta([Q]^*)\leq L$ this is the union of at most $2^{L+\lvert J_1\rvert}$ sets. Note that for $Q\subseteq K$, it holds $\lvert J_1\rvert\leq T$. Hence, if $Q\subseteq K$ is contained in some translate of the unit cube we can bound $\lvert \varphi(Z_u(\eta_A)\cap Q)\rvert$ with \eqref{eq:locallybounded} and the inclusion exclusion principle \eqref{eq:intrinsic volume polyconvex set}  by $2^{2^{L+T}}M_\varphi$. Thus, we have for $K\in\mathcal{K}^d$,
		\begin{align*}
			\lvert \varphi(Z_u(\eta_A)\cap K)\rvert&= \Big\lvert   \varphi	\Big(Z_u(\eta_A)\cap K\cap\bigcup_{(n,z)\in I_{K,L}}Q_{n,z}\Big)\Big\rvert\\&\leq \sum_{\emptyset\neq J\subseteq I_{K,L}}\Big\lvert  \varphi\Big(Z_u(\eta_A)\cap K\cap\bigcap_{(n,z)\in J}Q_{n,z}\Big)\Big\rvert\\
			&\leq   \sum_{(m,p)\in I_{K,L}}\sum_{\substack{J\subseteq N(m,p):\\ (m,p)\in J}}\Big\lvert  \varphi\Big(Z_u(\eta_A)\cap K\cap\bigcap_{(n,z)\in J}Q_{n,z}\Big)\Big\rvert\\
			&\leq2^{3^d}2^{2^{L+T}}M_\varphi\lvert I_{K,L}\rvert,
		\end{align*}
		where the last inequality holds because of $\lvert N(m,p)\rvert\leq 3^d$ for $(m,p)\in I_{K,L}$. This provides the first part of the lemma. 
		
		Now, let $K_1,K_2\in\mathcal{K}^d$. For the second part of the lemma note at first that the statement is clear for $k_1=k_2=0$. For the other cases
		we define $\tilde{I}_s=\{(n,z)\in\mathbb{N}_0\times\mathbb{Z}^d: Q_{n,z}\cap K_s\neq\emptyset\}$ for $s=1,2$ and 
		\begin{align}\label{eq:ell_s}
			\ell_s=\lvert \{z\in\mathbb{Z}^d:Q_{0,z}\cap K_s\neq \emptyset\}\rvert\leq V_d(K_s+B^d(\mathbf{0},\sqrt{d}))= V_d\Big(K_s^{\sqrt{d}}\Big).
		\end{align}
		Then, for fixed $n\in\mathbb{N}$, there are up to $2^{nd}\lvert \{z\in\mathbb{Z}^d:Q_{0,z}\cap K_s\neq \emptyset\}\rvert=2^{nd}\ell_s$ vectors $z\in\mathbb{Z}^d$ such that $(n,z)\in\tilde{I}_s$.
		Together with Lemma \ref{lemma_prob_exp} we have for $s\in\{1,2\}$ and $k_s\geq 1$,
		\begin{align*}
			\E[\lvert I_{K_s,L}\rvert^{2k_s}]
			&= \sum_{\substack{(n_{i},z_{i})\in\tilde{I}_s,\\i\in\{1,\dots,2k_s\}}}\E[\mathbbm{1}\{(n_{i},z_{i})\in I_{K_s,L} \text{ for all } i\in\{1,\dots,2k_s\}]\\
			&\leq 2k_s\sum_{\substack{(n_{i},z_{i})\in\tilde{I}_s,\\ n_1\geq n_{i},\\i\in\{1,\dots,2k_s\}}}\E[\mathbbm{1}\{(n_1,z_1)\in I_{K_s,L} \}]\\
			&= 2k_s\sum_{\substack{(n_{i},z_{i})\in\tilde{I}_s,\\ n_1\geq n_{i},\\i\in\{1,\dots,2k_s\}}}\p(\eta([Q_{n_1-1,\lfloor z/2\rfloor}]^*)>L, \eta([Q_{n_1,z}]^*)\leq L)\\
			&\leq 2k_s\Bigg[\ell_s^{2k_s}+\sum_{n_1=1}^\infty\sum_{\substack{n_i\in\{0,\dots,n_1\},\\i\in\{2,\dots,2k_s\}}}\sum_{\substack{z_{i}\in\mathbb{Z}^d:(n_i,z_i)\in\tilde{I}_s,\\i\in\{1,\dots,2k_s\}}}C2^{-n_1(L+1)}\Bigg]\\
			&\leq 2k_s \ell_s^{2k_s}\left(1+C\sum_{n_1=1}^{\infty}(n_1+1)^{2k_s-1}2^{2k_sn_1d}2^{-n_1(L+1)}\right)=: C_{k_s}\ell_s^{2k_s}<\infty
		\end{align*}
		for $L\geq 2dk_s$ and some constant $C_{k_s}>0$. Then, the Cauchy inequality implies for $L\geq 2d\max\{k_1,k_2\}$,
		\begin{align*}
			\E[\lvert I_{K_1,L}\rvert^{k_1}\lvert I_{K_2,L}\rvert^{k_2}]\leq \E[\lvert I_{K_1,L}\rvert^{2k_1}]^{1/2}\E[\lvert I_{K_2,L}\rvert^{2k_2}]^{1/2}\leq \sqrt{C_{k_1}C_{k_2}}\ell_1^{k_1}\ell_2^{k_2},
		\end{align*}
		which provides together with \eqref{eq:ell_s} the lemma for $k_1,k_2\geq 1$. The cases $k_1=0,k_2\geq 1$ and $k_1\geq1,k_2=0$ follow analogously.
	\end{proof}
	\begin{proof}[Proof of Theorem \ref{thm:intrinsicvolumes_expectation}]
		By Lemma \ref{lemma:V_i and I} we have for $\mathcal{K}^d\ni K\subseteq C^d$,
		\begin{align}\label{eq:cond_bounded}
			\E[\lvert\varphi(Z_u\cap K)\rvert]\leq cV_d(K^{\sqrt{d}})\leq cV_d(C^d+B^d(\mathbf{0},\sqrt{d}))<\infty
		\end{align}
		for some constant $c>0$, i.e.\ $\varphi(Z_u\cap K)$ is integrable. By the translation invariance and the additivity of $\varphi$, $\varphi(Z_u\cap A)$ is also integrable for $A\in\mathcal{R}^d$. Hence, we can define a function $\phi:\mathcal{R}^d\to\mathbb{R}$ by
		\begin{align*}
			\phi(A)=\E[\varphi(Z_u\cap A)].
		\end{align*}
		This function is clearly additive, translation invariant by the stationarity of $Z_u$ and locally bounded due to \eqref{eq:cond_bounded}. Hence, we can apply \cite[Lemma 9.2.2]{SW08}, which shows Theorem \ref{thm:intrinsicvolumes_expectation}.
	\end{proof}
	\subsection{Construction of Example \ref{example:notstandard}}
	\label{proof:example}
	Let $\MM,\mathbb{Q}$ and $g_m$ for $m\in\MM$ be defined as in Example \ref{example:notstandard}.
	In the following we show that for $p\in(2^{-1/4},1)$,  $\E[2^{N(Z_u\cap C^2)}]$ is not finite. To this end we construct configurations with $\eta(S_{C^2})=2i$ for all $i\in\mathbb{N}$, which arise with a sufficiently large probability and for which $N(Z_u\cap C^2)=i^2$. The idea of the following construction is that we divide the $2i$ points in two groups of size $i$ such that the corresponding shifted supports of two points from different groups overlap in a cube, which does not intersect the shifted support of any other point and hence provides that $N(Z_u\cap C^2)=i^2$.
	
	To this end,  for $i\in\mathbb{N}$ and $k\in\{1,\dots,i\}$ let $R_{2i-1}^{(k)}=[\frac{8k-7}{4(2i-1)},\frac{8k-5}{4(2i-1)}]\times [0,1]\times\{2i-1\}$ and $R_{2i}^{(k)}=[0,1]\times[\frac{8k-7}{4(2i-1)},\frac{8k-5}{4(2i-1)}]\times\{2i\}$. Then, if for some fixed $i\in\mathbb{N}$,
	\begin{align*}
		\eta(R_{2i-1}^{(k)})=\eta(R_{2i}^{(k)})=1 \quad\text{ for }\quad k\in\{1,\dots,i\} \quad\text{ and }\quad	\eta\Big(S_{C^2}\Big\backslash \bigcup_{k=1}^i(R_{2i-1}^{(k)}\cup R_{2i}^{(k)})\Big)=0,
	\end{align*} 
	it holds $N(Z_u\cap C^2)=i^2$. Since
	\begin{align}
		\lambda(R_{2i-1}^{(k)})&=\lambda_2\otimes\Q(R_{2i-1}^{(k)})=\frac{2}{4(2i-1)}\Q(\{2i-1\})=\frac{1-p}{2(2i-1)}p^{2i-2}\leq \frac{(1-p)}{2}\label{eq:R2i-1},\\
		\lambda(R_{2i-1}^{(k)})&\geq \lambda(R_{2i}^{(k)})=\frac{1-p}{2(2i-1)}p^{2i-1}\label{eq:R2i}
	\end{align}
	and
	\begin{align}\label{eq:SCd}
		\lambda\Big(S_{C^2}\Big\backslash \bigcup_{k=1}^i(R_{2i-1}^{(k)}\cup R_{2i}^{(k)})\Big)\leq \lambda(S_{C^2})\leq \lambda_2([-2,2]^2)=16,
	\end{align}
	we have
	\begin{align*}
		&\E[2^{N(Z_u\cap C^2)}]\geq \sum_{i=1}^{\infty}2^{i^2}\p(N(Z_u\cap C^2)=i^2,\eta(S_{C^2})=2i)\\
		&\geq\sum_{i=1}^{\infty}2^{i^2}\p\Big(	\eta(R_{2i-1}^{(k)})=\eta(R_{2i}^{(k)})=1\text{ for } k\in\{1,\dots,i\}, 	\eta\Big(S_{C^2}\Big\backslash \bigcup_{k=1}^i(R_{2i-1}^{(k)}\cup R_{2i}^{(k)})\Big)=0\Big)\\
		&\geq\sum_{i=1}^{\infty}2^{i^2}\prod_{k=1}^{i}\lambda(R_{2i-1}^{(k)})\lambda(R_{2i}^{(k)})e^{-\lambda(R_{2i-1}^{(k)})-\lambda(R_{2i}^{(k)})}e^{-\lambda(S_{C^2}\backslash \cup_{k=1}^i(R_{2i-1}^{(k)}\cup R_{2i}^{(k)}))}\\
		&\geq c \sum_{i=1}^{\infty}2^{i^2}\prod_{k=1}^{i}\lambda(R_{2i}^{(k)})^2
	\end{align*}
	for a suitable constant $c>0$, where $c$ depends on the bounds in \eqref{eq:R2i-1} and \eqref{eq:SCd}. Hence, with \eqref{eq:R2i},
	\begin{align*}
		\E[2^{N(Z_u\cap C^2)}]&\geq c\sum_{i=1}^{\infty}2^{i^2}\frac{(1-p)^{2i}}{2^{2i}(2i-1)^{2i}}p^{2i(2i-1)}
		\geq c\sum_{i=1}^{\infty}(2p^4)^{i^2}\Big(\frac{(1-p)^2}{p^2\cdot 4(2i-1)^2}\Big)^i=\infty
	\end{align*}
	because $2p^4>1$, which shows that $Z_u$ is not a standard random set for $p\in(2^{-1/4},1)$.
	
	\subsection{Variance}  
	In order to calculate the asymptotic variance using the Fock space representation, we need to control the expectation of the $n$'th difference operator. For the proof of the central limit theorem we additionally require bounds for moments and products of difference operators, which we establish here as well.
	\begin{lemma}
		\label{lemma:diff_operator}
		Let $\hat{x}_j\in\mathbb{R}^d\times\mathbb{M}$ for $j\in\{1,\dots,n\}$ and $\hat{K}=\bigcap_{j=1}^n\hat{K}_j$. Then it holds
		\begin{align}
			D^n_{\hat{x}_1,\dots,\hat{x}_n} \varphi(Z_u\cap W_r)=D^n_{\hat{x}_1,\dots,\hat{x}_n} \varphi(Z_u\cap W_r\cap \hat{K})\label{eq:D^n}
		\end{align}
		and for $\hat{y}_\ell,\;\hat{z}_s\in\mathbb{R}^d\times\mathbb{M}$ for $\ell\in\{1,\dots,n_1\}$, $s\in\{1,\dots,n_2\}$, $n_1,n_2\in\mathbb{N}$ and  $k_1,k_2\in\mathbb{N}_0$,
		\begin{align}
			&\E[\lvert D^{n_1}_{\hat{y}_1,\dots,\hat{y}_{n_1}} \varphi(Z_u\cap W_r)\rvert^{k_1}\lvert D^{n_2}_{\hat{z}_1,\dots,\hat{z}_{n_2}} \varphi(Z_u\cap W_r)\rvert^{k_2}]\nonumber\\&\leq c_{k_1,k_2}2^{k_1n_1+k_2n_2}V_d\Big(\hat{A}_1^{\sqrt{d}}\cap W_r^{\sqrt{d}}\Big)^{k_1}V_d\Big(\hat{A}_2^{\sqrt{d}}\cap W_r^{\sqrt{d}}\Big)^{k_2}\label{eq:E[D^n]}
		\end{align}
		for some constant $c_{k_1,k_2}>0$, which depends on $k_1$ and $k_2$ and where $\hat{A}_1=\bigcap_{\ell=1}^{n_1}K(\hat{y}_\ell)$ and $\hat{A}_2=\bigcap_{s=1}^{n_2}K(\hat{z}_s)$.
	\end{lemma}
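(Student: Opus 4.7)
The plan for \eqref{eq:D^n} is to establish the case $n=1$ directly and then iterate. For $n=1$, the key observation is that since $g_{m_1}\geq 0$ vanishes on the complement of its compact convex support $K_{m_1}$, one has the pointwise monotonicity $Z_u(\eta)\subseteq Z_u(\eta+\delta_{\hat{x}_1})$ and the two excursion sets coincide outside $\hat{K}_1$. This yields the decomposition
\[
Z_u(\eta+\delta_{\hat{x}_1})\cap W_r=\bigl(Z_u(\eta)\cap W_r\bigr)\cup\bigl(Z_u(\eta+\delta_{\hat{x}_1})\cap W_r\cap\hat{K}_1\bigr),
\]
whose intersection equals $Z_u(\eta)\cap W_r\cap\hat{K}_1$ by the same monotonicity. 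All sets involved are polyconvex by Proposition \ref{lemma E polyconvex}, so the inclusion-exclusion formula for the additive functional $\varphi$ gives $D^1_{\hat{x}_1}\varphi(Z_u\cap W_r)=D^1_{\hat{x}_1}\varphi(Z_u\cap W_r\cap\hat{K}_1)$. Since $W_r\cap\hat{K}_1$ is itself compact convex, the same argument with this new window absorbs $\hat{K}_2$, and iterating (using that the difference operators commute) produces \eqref{eq:D^n}.

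For \eqref{eq:E[D^n]}, the plan is to combine \eqref{eq:D^n} with Lemma \ref{lemma:V_i and I}. Expanding the difference operator as
\[
D^{n_1}_{\hat{y}_1,\dots,\hat{y}_{n_1}}\varphi(Z_u\cap W_r)=\sum_{J\subseteq\{1,\dots,n_1\}}(-1)^{n_1-\lvert J\rvert}\varphi\Bigl(Z_u\Bigl(\eta+\sum_{\ell\in J}\delta_{\hat{y}_\ell}\Bigr)\cap W_r\cap\hat{A}_1\Bigr),
\]
and using the elementary inequality $(\sum_{j=1}^N\lvert a_j\rvert)^{k}\leq N^{k-1}\sum_{j=1}^N\lvert a_j\rvert^{k}$ with $N=2^{n_1}$, a factor of $2^{n_1k_1}$ appears outside (and likewise $2^{n_2k_2}$ for the other operator). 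The crucial geometric point is that $W_r\cap\hat{A}_1\subseteq K(\hat{y}_\ell)$ for every $\ell\in\{1,\dots,n_1\}$, so the parameter $T$ in the first bound of Lemma \ref{lemma:V_i and I} equals zero, and hence each of the $2^{n_1}$ summands is bounded pointwise by $C\lvert I_{W_r\cap\hat{A}_1,L}\rvert$ with $C$ depending only on $L$ and $d$; the analogous bound holds for the $\hat{A}_2$-side.

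Choosing $L=2d\max\{k_1,k_2\}$ (and absorbing the resulting constants into $c_{k_1,k_2}$) and taking expectations reduces the problem to estimating
\[
\E\bigl[\lvert I_{W_r\cap\hat{A}_1,L}\rvert^{k_1}\lvert I_{W_r\cap\hat{A}_2,L}\rvert^{k_2}\bigr],
\]
to which the second part of Lemma \ref{lemma:V_i and I} applies directly, bounding it by a constant multiple of $V_d((W_r\cap\hat{A}_1)^{\sqrt{d}})^{k_1}V_d((W_r\cap\hat{A}_2)^{\sqrt{d}})^{k_2}$. The elementary Minkowski inclusion $(W_r\cap\hat{A}_i)^{\sqrt{d}}\subseteq W_r^{\sqrt{d}}\cap\hat{A}_i^{\sqrt{d}}$ together with monotonicity of $V_d$ yields the stated form of the bound.

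The main obstacle lies in \eqref{eq:D^n}: the decomposition on which it rests must be chosen carefully, and the additivity of $\varphi$ can only be applied cleanly because the monotonicity $Z_u(\eta)\subseteq Z_u(\eta+\delta_{\hat{x}_1})$ together with the non-negativity and support property of $g_{m_1}$ force the intersection of the two parts in the decomposition to reduce to $Z_u(\eta)\cap W_r\cap\hat{K}_1$ without any leftover boundary contribution. Once this identity is secured, \eqref{eq:E[D^n]} becomes a routine combination of it with the two ingredients of Lemma \ref{lemma:V_i and I}.
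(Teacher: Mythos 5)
Your proposal is correct and follows essentially the same route as the paper: the identity \eqref{eq:D^n} is obtained from the same union decomposition with intersection $Z_u\cap W_r\cap\hat{K}_1$ (using $g_{m_1}\geq 0$ and its support) and then iterated, and \eqref{eq:E[D^n]} is derived exactly as in the paper by expanding the difference operator, observing that $T=0$ because $W_r\cap\hat{A}_i\subseteq K(\hat{y}_\ell)$, choosing $L=2d\max\{k_1,k_2\}$ in Lemma \ref{lemma:V_i and I}, and using $(W_r\cap\hat{A}_i)^{\sqrt{d}}\subseteq W_r^{\sqrt{d}}\cap\hat{A}_i^{\sqrt{d}}$. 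The only cosmetic difference is that you distribute the power over the $2^{n_1}$ summands via the power-mean inequality, whereas the paper bounds the sum pointwise first and then raises it to the power; both yield the factor $2^{k_1n_1+k_2n_2}$.
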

	\begin{proof}
		For $j\in\{1,\dots,n\}$ and $\hat{x}_j=(x_j,m_j)\in \mathbb{R}^d\times\MM$  we have $g_{m_j}\geq 0$ and $g_{m_j}(y-x_j)=0$ for $y\notin \hat{K}_j$. With the additivity of geometric functionals we get
		\begin{align*}
			&D_{\hat{x}_j} \varphi(Z_u\cap W_r)\\
			&= \varphi(\{y\in W_r:f_\eta(y)\geq u-g_{m_j}(y-x_j)\})- \varphi(\{y\in W_r:f_\eta(y)\geq u\})\\
			&= \varphi(\{y\in W_r:f_\eta(y)\geq u\}\cup \{y\in W_r\cap \hat{K}_j:f_\eta(y)\geq u-g_{m_j}(y-x_j)\})\\&\quad- \varphi(\{y\in W_r:f_\eta(y)\geq u\})\\
			&= \varphi(\{y\in W_r\cap \hat{K}_j:f_\eta(y)\geq u-g_{m_j}(y-x_j)\})- \varphi(\{y\in W_r\cap\hat{K}_j:f_\eta(y)\geq u\})\\
			&=D_{\hat{x}_j} \varphi(Z_u\cap W_r\cap \hat{K}_j).
		\end{align*}
		Applying this scheme iteratively shows Equation \eqref{eq:D^n}.
		
		Lemma \ref{lemma:V_i and I} and Equation \eqref{eq:D^n} provide for $L\in\mathbb{N}$ and $n\in\mathbb{N}$,
		\begin{align*}
			\lvert D^{n}_{\hat{x}_1,\dots,\hat{x}_{n}} \varphi(Z_u\cap W_r)\rvert&=	\lvert D^{n}_{\hat{x}_1,\dots,\hat{x}_{n}} \varphi(Z_u\cap W_r\cap \hat{K})\rvert\\&=\Big\lvert\sum_{J\subseteq\{1,\dots,n\}}(-1)^{n-\lvert J\rvert}\varphi\Big(Z_u\Big(\eta+\sum_{j\in J}\delta_{(x_j,m_j)}\Big)\cap W_r\cap\hat{K}\Big)\Big\rvert\\&\leq \sum_{k=0}^{n}\binom{n}{k}C 2^{2^{L}}\lvert I_{\hat{K}\cap W_r,L}\rvert=C 2^n2^{2^{L}}\lvert I_{\hat{K}\cap W_r,L}\rvert.
		\end{align*}
		Note that $T$ from Lemma \ref{lemma:V_i and I} vanishes since $\hat{K}_j\cap \hat{K}\cap W_r= \hat{K}\cap W_r$ for $j\in\{1,\dots,n\}$.
		This provides
		\begin{align*}
			&\lvert D^{n_1}_{\hat{y}_1,\dots,\hat{y}_{n_1}} \varphi(Z_u\cap W_r)\rvert^{k_1}\lvert D^{n_2}_{\hat{z}_1,\dots,\hat{z}_{n_2}} \varphi(Z_u\cap W_r)\rvert^{k_2}\\&\leq  \big(C 2^{2^{L}}\big)^{k_1+k_2}2^{k_1n_1+k_2n_2}\lvert I_{\hat{A}_1\cap W_r,L}\rvert^{k_1}\lvert I_{\hat{A}_2\cap W_r,L}\rvert^{k_2}.
		\end{align*}
		Now, Lemma \ref{lemma:V_i and I} completes the proof of \eqref{eq:E[D^n]} with a choice of $L=2d\max\{k_1,k_2\}$ since $V_d\Big((\hat{A}_i\cap W_r)^{\sqrt{d}}\Big)\leq V_d\Big(\hat{A}_i^{\sqrt{d}}\cap W_r^{\sqrt{d}}\Big)$ for $i\in\{1,2\}$.
	\end{proof}
	\begin{rema}\label{rema:difference operator}
		Note that for $k\in\mathbb{N}$, Lemma \ref{lemma:diff_operator} especially provides
		\begin{align*}
			\E[\lvert D_{\hat{x}_1,\dots,\hat{x}_n}^n\varphi(Z_u\cap \hat{K}\cap W_r)\rvert^k]\leq c_{k,0} 2^{kn} V_d(\hat{K}^{\sqrt{d}}\cap W_r^{\sqrt{d}})^k
		\end{align*}
		for $\hat{K}=\bigcap_{j=1}^n\hat{K}_j$ for all $r\geq1$, which implies that
		\begin{align*}
			\E[\lvert D_{\hat{x}_1,\dots,\hat{x}_n}^n\varphi(Z_u\cap \hat{K})\rvert^k]\leq c_{k,0} 2^{kn} V_d(\hat{K}^{\sqrt{d}})^k
		\end{align*}
		since $r$ can be chosen large enough such that $\hat{K}\subseteq W_r$.
	\end{rema}
	\begin{proof}[Proof of Theorem \ref{thm:variance_limit}]
		Since the second moment of $ \lvert \varphi(Z_u\cap W_r)\rvert$ exists by Lemma \ref{lemma:V_i and I},
		the Fock space representation \eqref{eq:fockspace} and Equation \eqref{eq:D^n} provide
		\begin{align*}
			\Var[ \varphi(Z_u\cap W_r)]&=\sum_{n=1}^\infty \frac{1}{n!}\int(\E D^n_{\hat{x}_1,\dots,\hat{x}_n} \varphi(Z_u\cap W_r))^2\;\lambda^n(\mathrm{d}(\hat{x}_1,\dots,\hat{x}_n))\\
			&=\sum_{n=1}^\infty \frac{1}{n!}\int(\E D^n_{\hat{x}_1,\dots,\hat{x}_n} \varphi(Z_u\cap W_r\cap\hat{K}))^2\;\lambda^n(\mathrm{d}(\hat{x}_1,\dots,\hat{x}_n)).
		\end{align*}
		Let
		\begin{align*}
			f_{n,r}(m_1)=\frac{\gamma}{V_d(W_r)}\int_{\mathbb{R}^d}\int_{(\mathbb{R}^d\times\MM)^{n-1}}(\E D^n_{\hat{x}_1,\dots, \hat{x}_n} \varphi(Z_u\cap W_r\cap\hat{K}))^2\;\lambda^{n-1}(\mathrm{d}(\hat{x}_2,\dots,\hat{x}_n))\;\mathrm{d}x_1
		\end{align*}
		and for $K=K_{m_1}\cap\bigcap_{j=2}^n\hat{K}_j$ and $\mathbf{0}=(0,\dots,0)\in\mathbb{R}^d$,
		\begin{align}\label{eq:fn_umgeschrieben}
			f_n(m_1) &= \gamma\int_{(\mathbb{R}^d\times\MM)^{n-1}}(\E D^n_{(\mathbf{0},m_1),\hat{x}_2,\dots,\hat{x}_n} \varphi(Z_u\cap K))^2\;\lambda^{n-1}(\mathrm{d}(\hat{x}_2,\dots,\hat{x}_n))\nonumber\\
			&=\frac{\gamma}{V_d(W_r)}\int_{\mathbb{R}^d}\mathbbm{1}\{x_1+z\in W_r\}\nonumber\\&\quad\quad\quad\quad\quad\times\int_{(\mathbb{R}^d\times\MM)^{n-1}}(\E D^n_{(\mathbf{0},m_1),\hat{x}_2,\dots,\hat{x}_n} \varphi(Z_u\cap K))^2\;\lambda^{n-1}(\mathrm{d}(\hat{x}_2,\dots,\hat{x}_n))\;\mathrm{d}x_1\nonumber\\
			&=\frac{\gamma}{V_d(W_r)}\int_{\mathbb{R}^d}\mathbbm{1}\{x_1+z\in W_r\}\nonumber\\&\quad\quad\quad\quad\quad\times\int_{(\mathbb{R}^d\times\MM)^{n-1}}(\E D^n_{(x_1,m_1),\hat{x}_2,\dots,\hat{x}_n} \varphi(Z_u\cap K))^2\;\lambda^{n-1}(\mathrm{d}(\hat{x}_2,\dots,\hat{x}_n))\;\mathrm{d}x_1
		\end{align}
		for any $z\in K_{m_1}$ by translation invariance.
		We want to show that
		\begin{align*}
			\frac{\Var[\varphi(Z_u\cap W_r)]}{V_d(W_r)}=\sum_{n=1}^\infty \frac{1}{n!}\int_\MM f_{n,r}(m_1)\;\mathbb{Q}(\mathrm{d}m_1)\to \sum_{n=1}^\infty \frac{1}{n!}\int_\MM f_{n}(m_1)\;\mathbb{Q}(\mathrm{d}m_1)=\sigma_0
		\end{align*}
		for $r\to\infty$. To this end, we first apply Fubini and then show the convergence of the integrals using the dominated convergence theorem.
		
		To apply the dominated convergence theorem we start with bounding $\lvert f_{n,r}(m_1)\rvert$. 
		With Remark \ref{rema:difference operator} we have together with the monotonicity of intrinsic volumes of convex sets and \eqref{eq:steiner},
		\begin{align*}
			\lvert f_{n,r}(m_1)\rvert&\leq\frac{\gamma}{V_d(W_r)}\int_{\mathbb{R}^d}\int_{(\mathbb{R}^d\times\MM)^{n-1}} V_d(\hat{K}^{\sqrt{d}}\cap W_r^{\sqrt{d}})^2c_{1,0}^24^n\;\lambda^{n-1}(\mathrm{d}(\hat{x}_2,\dots,\hat{x}_n))\;\mathrm{d}x_1\\
			&\leq \frac{C_1c_{1,0}^24^n\gamma}{V_d(W_r)}\int_{\mathbb{R}^d}V_d(\hat{K}_1^{\sqrt{d}}\cap W_r^{\sqrt{d}})\int_{(\mathbb{R}^d\times\MM)^{n-1}}\sum_{j=0}^{d}V_j(\hat{K})\;\lambda^{n-1}(\mathrm{d}(\hat{x}_2,\dots,\hat{x}_n))\;\mathrm{d}x_1.
		\end{align*}
		Equation \eqref{eq:int_Acapk1capkn} and the translation invariance of intrinsic volumes provide
		\begin{align*}
			\int_{(\mathbb{R}^d\times\MM)^{n-1}}\sum_{j=0}^{d}V_j\Big(\hat{K}_1\cap\bigcap_{j=2}^n\hat{K}_j\Big)\;\lambda^{n-1}(\mathrm{d}(\hat{x}_2,\dots,\hat{x}_n))\leq C_4^{n-1}\sum_{k=0}^{d}V_k(K_{m_1}).
		\end{align*}
		With \eqref{eq:steiner}, \eqref{eq:int_V_d A_1+xcapA_2}  and
		\begin{align*}
			V_d(W_r^{\sqrt{d}})=V_d(rW+B^d(\mathbf{0},\sqrt{d}))\leq r^d V_d(W+B^d(\mathbf{0},\sqrt{d}))=r^dV_d(W^{\sqrt{d}})
		\end{align*}
		for $r\geq1$, we have
		\begin{align*}
			\int_{\mathbb{R}^d}V_d(\hat{K}_1^{\sqrt{d}}\cap W_r^{\sqrt{d}})\;\mathrm{d}x_1&=\int_{\mathbb{R}^d}V_d((K_{m_1}^{\sqrt{d}}+x_1)\cap W_r^{\sqrt{d}})\;\mathrm{d}x_1=V_d(K_{m_1}^{\sqrt{d}}) V_d(W_r^{\sqrt{d}})\\&\leq C_1\sum_{k=0}^{d}V_k(K_{m_1})r^dV_d(W^{\sqrt{d}}).
		\end{align*}
		
		Altogether, this leads with the positivity of the intrinsic volumes for convex sets and $V_d(rW)=r^dV_d(W)$ to
		\begin{align}
			\sum_{n=1}^\infty\frac{1}{n!}\lvert f_{n,r}(m_1)\rvert&\leq\sum_{n=1}^\infty\frac{\gamma c_{1,0}^24^nC_4^{n-1}}{n!} C_1^2\frac{V_d(W^{\sqrt{d}})}{V_d(W)}\Big(\sum_{k=0}^{d}V_k(K_{m_1})\Big)^2\nonumber\\
			&\leq e^{4C_4}\gamma c_{1,0}^2\frac{C_1^2V_d(W^{\sqrt{d}})}{C_4V_d(W)}\Big(\sum_{k=0}^{d}V_k(K_{m_1})\Big)^2,\label{eq:dom_convergent}
		\end{align}
		which is independent of $r$ and by the second moment assumption in \eqref{Assumption_fourth_moment} integrable.
		
		In the next step we bound $\lvert f_{n,r}(m_1)-f_n(m_1)\rvert$. By \eqref{eq:fn_umgeschrieben} it holds for any $z\in K_{m_1}$,
		\begin{align*}
			&\lvert f_{n,r}(m_1)-f_n(m_1)\rvert\\&=\Big\lvert \frac{\gamma}{V_d(W_r)}\int_{\mathbb{R}^d}\int_{(\mathbb{R}^d\times\MM)^{n-1}}\Big[(\E D^n_{\hat{x}_1,\dots,\hat{x}_n} \varphi(Z_u\cap W_r\cap\hat{K}))^2\\&\hspace*{2.5cm}-\mathbbm{1}\{x_1+z\in W_r\}(\E D^n_{\hat{x}_1,\dots,\hat{x}_n} \varphi(Z_u\cap\hat{K}))^2\Big]\;\lambda^{n-1}(\mathrm{d}(\hat{x}_2,\dots,\hat{x}_n))\;\mathrm{d}x_1\Big\rvert.
		\end{align*}
		The term in the integral becomes $0$ if $\hat{K}_1\subseteq W_r$ or $\hat{K}_1\cap W_r=\emptyset$. In the first case it vanishes because for $\hat{K}_1\subseteq W_r$ it holds $ \varphi(Z_u\cap\hat{K})= \varphi(Z_u\cap W_r\cap\hat{K})$ and $x_1+z\in W_r$. In the second case it vanishes since for $\hat{K}_1\cap W_r=\emptyset$ we have $ \varphi(Z_u\cap W_r\cap\hat{K})=0$ and $x_1+z\notin W_r$. Together with Remark \ref{rema:difference operator} and the triangle inequality it holds
		\begin{align*}
			&\lvert f_{n,r}(m_1)-f_n(m_1)\rvert\\&=\Big\lvert \frac{\gamma}{V_d(W_r)}\int_{\mathbb{R}^d}\int_{(\mathbb{R}^d\times\MM)^{n-1}}\mathbbm{1}\{\hat{K}_1\cap\partial W_r\neq \emptyset\}\Big[(\E D^n_{\hat{x}_1,\dots,\hat{x}_n} \varphi(Z_u\cap W_r\cap\hat{K}))^2\\&\hspace*{2.5cm}-\mathbbm{1}\{x_1+z\in W_r\}(\E D^n_{\hat{x}_1,\dots,\hat{x}_n} \varphi(Z_u\cap\hat{K}))^2\Big]\;\lambda^{n-1}(\mathrm{d}(\hat{x}_2,\dots,\hat{x}_n))\;\mathrm{d}x_1\Big\rvert\\
			&\leq \frac{\gamma}{V_d(W_r)}\int_{\mathbb{R}^d}\mathbbm{1}\{\hat{K}_1\cap\partial W_r\neq \emptyset\}\\&\hspace*{2.5cm}\times\int_{(\mathbb{R}^d\times\MM)^{n-1}}2V_d(\hat{K}^{\sqrt{d}})^2c_{1,0}^24^n \;\lambda^{n-1}(\mathrm{d}(\hat{x}_2,\dots,\hat{x}_n))\;\mathrm{d}x_1.
		\end{align*}
		Together with the monotonicity of the intrinsic volumes and Equations \eqref{eq:steiner} and \eqref{eq:int_Acapk1capkn},  we have for the second part of the integral
		\begin{align*}
			&\int_{(\mathbb{R}^d\times\MM)^{n-1}}2V_d(\hat{K}^{\sqrt{d}})^2c_{1,0}^24^n \;\lambda^{n-1}(\mathrm{d}(\hat{x}_2,\dots,\hat{x}_n))\\
			&\leq 2c_{1,0}^24^nV_d(K_{m_1}^{\sqrt{d}})\int_{(\mathbb{R}^d\times\MM)^{n-1}} V_d(\hat{K}^{\sqrt{d}})\;\lambda^{n-1}(\mathrm{d}(\hat{x}_2,\dots,\hat{x}_n))\\
			&\leq 2C_4^{n-1}c_{1,0}^24^nV_d(K_{m_1}^{\sqrt{d}})C_1\sum_{j=0}^{d}V_j(K_{m_1}).
		\end{align*}
		Together with Equations \eqref{eq:lambda_A_1+xcappartialA_2} and \eqref{eq:frationV_iV_d} this leads to
		\begin{align*}
			\lvert f_{n,r}(m_1)-f_n(m_1)\rvert&\leq 2\gamma C_2C_4^{n-1}c_{1,0}^24^nC_1V_d(K_{m_1}^{\sqrt{d}})\Big(\sum_{j=0}^{d}V_j(K_{m_1})\Big)^2\sum_{k=0}^{d-1}\frac{V_k(W_r)}{V_d(W_r)}\\
			&\leq 2\gamma C_2C_3C_4^{n-1}c_{1,0}^24^nC_1V_d(K_{m_1}^{\sqrt{d}})\Big(\sum_{j=0}^{d}V_j(K_{m_1})\Big)^2\frac{d}{r(W_r)}
		\end{align*}
		for $r\geq 1$ large enough such that $r(W_r)\geq 1$.	Hence,
		\begin{align*}
			\Big\lvert\sum_{n=1}^\infty\frac{1}{n!}f_{n,r}(m_1)-\sum_{n=1}^\infty\frac{1}{n!}f_n(m_1)\Big\rvert\leq e^{4C_4}\gamma c_{1,0}^2\frac{2C_1C_2C_3}{C_4}V_d(K_{m_1}^{\sqrt{d}})\Big(\sum_{j=0}^{d}V_j(K_{m_1})\Big)^2\frac{d}{r(W_r)},
		\end{align*}
		which goes to $0$ for $r\to\infty$ and provides the theorem together with \eqref{eq:dom_convergent} by the dominated convergence theorem.
	\end{proof}
	For the proof of the positivity of the variance, we additionally use three geometric lemmas for which we introduce the following notation. Let $m_1,\dots,m_k\in\MM$ and $x_k\in\mathbb{R}^d$ for some $k\in\mathbb{N}$ with $k\geq 2$. For $\mathbf{v}=(v_1,\dots,v_{k-1})\in(\mathbb{R}^d)^{k-1}$ let $\mathbf{K}(\mathbf{v})=\bigcap_{i=1}^{k-1}(K_{m_i}+v_i)\cap (K_{m_k}+x_k)$ and let $f_{\mathbf{v}}:\mathbb{R}^d\to\mathbb{R}_{\geq 0}$ with $f_{\mathbf{v}}(y)=\sum_{i=1}^{k-1}g_{m_i}(y-v_i)+g_{m_k}(y-x_k)$. Moreover, we define
	\begin{align*}
		\widetilde{Z}_u(\mathbf{v})=\Big\{y\in \mathbf{K}(\mathbf{v}):f_{\mathbf{v}}(y)\geq u\Big\}.
	\end{align*}
	\begin{lemma}\label{lemma:geom1 untere varianz}
		Let $k\in\mathbb{N}$ with $k\geq 2$, $x_k\in\mathbb{R}^d$ and $m_1,\dots,m_k\in \MM$ be such that $g_{m_i}$ is strictly concave on $K_{m_i}$ and $\max_{x\in\mathbb{R}^d}g_{m_i}(x)\leq\frac{u}{k-1}$ for $i\in\{1,\dots,k\}$. 
		Let $(\mathbf{x}_n)_{n\in\mathbb{N}}$ be a sequence in $(\mathbb{R}^d)^{k-1}$ with $\mathbf{x}_n\to\mathbf{x}$ as $n\to\infty$ for some $\mathbf{x}\in(\mathbb{R}^d)^{k-1}$. Then, for $t\in\{1,\dots,d\}$,
		\begin{align*}
			\limsup_{n\to\infty} V_t(\widetilde{Z}_u(\mathbf{x}_n))\leq  V_t(\widetilde{Z}_u(\mathbf{x})).
		\end{align*}
	\end{lemma}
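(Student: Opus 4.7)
The plan is to establish the inequality as an upper-semicontinuity property of the set-valued map $\mathbf{v} \mapsto \widetilde{Z}_u(\mathbf{v})$, and then invoke the continuity of intrinsic volumes on $\mathcal{K}^d$ with respect to the Hausdorff metric. First, an argument analogous to Proposition~\ref{lemma E polyconvex} shows that $\widetilde{Z}_u(\mathbf{v}) \in \mathcal{K}^d$ for every $\mathbf{v}$: convexity follows from concavity of each $g_{m_i}$ on $K_{m_i}$, and compactness from continuity of $g_{m_i}$ on the closed support $K_{m_i}$. Moreover, each $\widetilde{Z}_u(\mathbf{x}_n)$ is contained in the fixed compact set $K_{m_k} + x_k$, so the sequence is uniformly bounded.

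Next, I would show that the Painlev\'e--Kuratowski upper limit satisfies $\limsup_{n} \widetilde{Z}_u(\mathbf{x}_n) \subseteq \widetilde{Z}_u(\mathbf{x})$. Let $y_j \in \widetilde{Z}_u(\mathbf{x}_{n_j})$ with $y_j \to y$ along some subsequence. For each $i$, $y_j - (\mathbf{x}_{n_j})_i \in K_{m_i}$ and this sequence converges to $y - x_i$, which therefore lies in the closed set $K_{m_i}$; hence $y \in \mathbf{K}(\mathbf{x})$. Since $g_{m_i}$ is continuous on $K_{m_i}$ (by the assumption $g_{m_i} \in \widetilde{L}^1(\mathbb{R}^d)$), the values $g_{m_i}(y_j - (\mathbf{x}_{n_j})_i)$ converge to $g_{m_i}(y - x_i)$, and similarly for the summand involving $g_{m_k}$. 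Passing to the limit in $f_{\mathbf{x}_{n_j}}(y_j) \geq u$ then yields $f_{\mathbf{x}}(y) \geq u$, so $y \in \widetilde{Z}_u(\mathbf{x})$.

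Finally, I would pass to a subsequence $n_\ell$ along which $V_t(\widetilde{Z}_u(\mathbf{x}_{n_\ell}))$ converges to $\limsup_n V_t(\widetilde{Z}_u(\mathbf{x}_n))$. If infinitely many of the sets $\widetilde{Z}_u(\mathbf{x}_{n_\ell})$ are empty, the limsup equals $0$ and the claim is trivial from $V_t(\widetilde{Z}_u(\mathbf{x})) \geq 0$. Otherwise, restricting to the non-empty indices and applying Blaschke's selection theorem to the uniformly bounded sequence in $\mathcal{K}^d$, I extract a further subsequence converging in Hausdorff distance to some $K^* \in \mathcal{K}^d$. Hausdorff convergence of compact convex sets implies $K^* \subseteq \limsup_\ell \widetilde{Z}_u(\mathbf{x}_{n_\ell}) \subseteq \widetilde{Z}_u(\mathbf{x})$ by the previous step. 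Continuity of $V_t$ in the Hausdorff metric on $\mathcal{K}^d$, together with the monotonicity of $V_t$ under inclusion of convex sets, then gives $\lim_\ell V_t(\widetilde{Z}_u(\mathbf{x}_{n_\ell})) = V_t(K^*) \leq V_t(\widetilde{Z}_u(\mathbf{x}))$, which is the desired bound.

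The main obstacle, as I see it, is the second step: ensuring that continuity of $g_{m_i}$ only on $K_{m_i}$ suffices to pass to the limit, given that $g_{m_i}$ may jump at $\partial K_{m_i}$. The key observation is that the points $y_j - (\mathbf{x}_{n_j})_i$ and their limit $y - x_i$ all lie in $K_{m_i}$ thanks to the closedness of $K_{m_i}$, so the boundary discontinuity never intervenes. The strict-concavity and height conditions $\max g_{m_i} \le u/(k-1)$ play no explicit role in this upper-semicontinuity argument; they are presumably exploited in the subsequent lemmas feeding into Proposition~\ref{prop: variance positive}.
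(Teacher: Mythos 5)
Your proof is correct, but it follows a genuinely different route from the paper's. The paper establishes the key inclusion $\widetilde{Z}_u(\mathbf{x}_n)\subseteq \widetilde{Z}_u(\mathbf{x})+B^d(\mathbf{0},\varepsilon)$ for large $n$ by showing that every compact set $C$ disjoint from $\widetilde{Z}_u(\mathbf{x})$ is eventually avoided by $\widetilde{Z}_u(\mathbf{x}_n)$; to do so it must treat separately the exceptional set $C_1=\{y\in C: f_{\mathbf{x}}(y)=u\}$ of points lying outside $\mathbf{K}(\mathbf{x})$ where the level is nevertheless attained, and it is exactly in bounding $\lvert C_1\rvert\leq 2$ that the strict concavity and the height condition $\max_x g_{m_i}(x)\leq u/(k-1)$ enter. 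You instead argue pointwise on the Kuratowski upper limit: since $\widetilde{Z}_u(\mathbf{x}_{n_j})\subseteq\mathbf{K}(\mathbf{x}_{n_j})$ by definition, the closedness of the supports forces any limit point $y$ of $y_j\in\widetilde{Z}_u(\mathbf{x}_{n_j})$ into $\mathbf{K}(\mathbf{x})$, after which only the continuity of the restrictions $g_{m_i}|_{K_{m_i}}$ is needed to pass to the limit in $f_{\mathbf{x}_{n_j}}(y_j)\geq u$; no exceptional set arises, and the two extra hypotheses are never used. Your Blaschke-selection-plus-continuity-of-$V_t$ endgame (with the correct handling of the empty case and the observation that a Hausdorff limit is contained in the Kuratowski upper limit of the subsequence, hence in $\widetilde{Z}_u(\mathbf{x})$, so that monotonicity applies) replaces the paper's $\varepsilon$-neighbourhood argument; both are standard and both rely on the continuity of $K\mapsto V_t(K)$ on non-empty compact convex sets. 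The upshot is that your argument proves the lemma under weaker hypotheses (concavity and the standing model assumptions suffice), which is a mild strengthening rather than a defect; the hypotheses dropped here are of course still needed elsewhere in the proof of Proposition \ref{prop: variance positive}.
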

	\begin{proof}
		Let $\mathbf{x}=(x_1,\dots,x_{k-1})\in(\mathbb{R}^d)^{k-1}$ and $\mathbf{x}_n=(x_{n,1},\dots,x_{n,k-1})\in(\mathbb{R}^d)^{k-1}$ for $n\in\mathbb{N}$ with $\mathbf{x}_n\to\mathbf{x}$ as $n\to\infty$. We start with showing that for each compact set $C\subseteq\mathbb{R}^d$,
		\begin{align}\label{zu cap C}
			\widetilde{Z}_u(\mathbf{x})\cap C=\emptyset\implies \widetilde{Z}_u(\mathbf{x}_n)\cap C=\emptyset 
		\end{align}
		for $n$ large enough. To this end let $C\subseteq\mathbb{R}^d$ be a compact set with $\widetilde{Z}_u(\mathbf{x})\cap C=\emptyset$ and define $C_1=\{y\in C:f_{\mathbf{x}}(y)=u\}$. For each $y\in C_1$ there exists $i\in\{1,\dots,k\}$ with $y\notin K_{m_i}+x_i$ because $y\notin \widetilde{Z}_u(\mathbf{x})$ and hence also $B^d(y,2\delta (y))\cap (K_{m_i}+x_i)=\emptyset$ for some $\delta(y)>0$. Since $K_{m_i}+x_{n,i}\to K_{m_i}+x_i$ as $n\to\infty$, there exists $N_1(y)\in\mathbb{N}$ with $B^d(y,\delta (y))\cap (K_{m_i}+x_{n,i})=\emptyset$ and hence $B^d(y,\delta (y))\cap \widetilde{Z}_u(\mathbf{x}_n)=\emptyset$ for $n\geq N_1(y)$. Note that by the strict concavity of $g_{m_i}$ on $K_{m_i}$ for $i\in\{1,\dots,k\}$, the maximum of $g_{m_i}$ is only attained at one point for $i\in\{1,\dots,k\}$. Together with $\max_{x\in\mathbb{R}^d}g_{m_i}(x)\leq\frac{u}{k-1}$ it follows  $\lvert C_1\rvert\leq 1$ (for $k>2$) or $\lvert C_1\rvert\leq 2$ (for $k=2$). This is due to the fact that $f_{\mathbf{x}}(y)=u$ outside of $\mathbf{K}(\mathbf{x})$ is only possible if for $\mathbf{x}=(x_1,\dots,x_{k-1})$,  $g_{m_i}(y-x_i)=\max_{x\in\mathbb{R}^d}g_{m_i}(x)$ for $k-1$ indices $i\in\{1,\dots,k\}$ and $g_{m_j}(y-x)=0$ for the remaining index $j$.
		
		Now, because of $\lvert C_1\rvert\leq 2$ there exists $N_1\in\mathbb{N}$ such that $\bigcup_{y\in C_1}B^d(y,\delta(y))\cap\widetilde{Z}_u(\mathbf{x}_n)=\emptyset$ for $n\geq N_1$. Let $C_2=C\backslash \mathrm{int}\big(\bigcup_{y\in C_1}B^d(y,\delta(y))\big)$. Then, $f_{\mathbf{x}}(y)<u$ for all $y\in C_2$. Assume, there exists a sequence $(n_j)_{j\in\mathbb{N}}$ such that for all $j\in\mathbb{N}$ there exists $y_j\in C_2$ with $f_{\mathbf{x}_{n_j}}(y_j)\geq u$. Since $C_2$ is compact, there exists a subsequence $(y_{j_{\ell}})_{\ell\in\mathbb{N}}$ which converges to some $y\in C_2$ as $\ell\to\infty$. As $g_{m_i}$ is upper semi-continuous by the conditions on $g_{m_i}$ from Section \ref{section1.2} for $i\in\{1,\dots,k\}$, we have 
		\begin{align*}
			\limsup_{\ell\to\infty}f_{\mathbf{x}_{n_{j_{\ell}}}}(y_{j_{\ell}})\leq f_{\mathbf{x}}(y)<u,
		\end{align*}
		which is a contradiction to the assumption. Hence, $C_2\cap \widetilde{Z}_u(\mathbf{x}_n)=\emptyset$ for almost every $n\in\mathbb{N}$. Together with the result for $C_1$ there exists $N_0\in\mathbb{N}$ with $C\cap\widetilde{Z}_u(\mathbf{x}_n)=\emptyset$ for all $n\geq N_0$.
		
		Additionally, there exists a compact set $Z\subseteq \mathbb{R}^d$ such that $\widetilde{Z}_u(\mathbf{x}_n)\subseteq Z$ for all $n\in\mathbb{N}$ since $\mathbf{x}_n\to\mathbf{x}$ as $n\to\infty$.
		Then, if $\widetilde{Z}_u(\mathbf{x})=\emptyset$, applying \eqref{zu cap C} for $C=Z$ provides $\widetilde{Z}_u(\mathbf{x}_{n})=\emptyset$ for $n$ large enough. Thus, $V_t(\widetilde{Z}_u(\mathbf{x}_n))=V_t(\widetilde{Z}_u(\mathbf{x}))=0$ for $n$ large enough in the case of $\widetilde{Z}_u(\mathbf{x})=\emptyset$.
		If $\widetilde{Z}_u(\mathbf{x})\neq\emptyset$, for any $\varepsilon>0$ we can use \eqref{zu cap C} for $C=Z\backslash \mathrm{int}(\widetilde{Z}_u(\mathbf{x})+B^d(\mathbf{0},\varepsilon))$, which yields the existence of $N_0(\varepsilon)\in\mathbb{N}$ with $\widetilde{Z}_u(\mathbf{x}_{n})\subseteq\widetilde{Z}_u(\mathbf{x})+B^d(\mathbf{0},\varepsilon)$ for $n\geq N_0(\varepsilon)$. Hence, $V_t(\widetilde{Z}_u(\mathbf{x}_{n}))\leq V_t(\widetilde{Z}_u(\mathbf{x})+B^d(\mathbf{0},\varepsilon))$ for any $\varepsilon>0$ if $n$ is large enough. Since $K\mapsto V_t(K)$ is continuous for all non-empty compact convex sets by \cite[Remark 3.22]{HW20}, this provides $\limsup_{n\to\infty} V_t(\widetilde{Z}_u(\mathbf{x}_n))\leq  V_t(\widetilde{Z}_u(\mathbf{x}))$.
	\end{proof}

	\begin{lemma}\label{lemma:h continuous}
		Let $k\in\mathbb{N}$ with $k\geq 2$, $x_k\in\mathbb{R}^d$ and $m_1,\dots,m_k\in \MM$. Define $h:(\mathbb{R}^d)^{k-1}\to\mathbb{R}_{\geq 0}$ by $h(\mathbf{x})=\max_{y\in \mathbb{R}^d}f_{\mathbf{x}}(y)$ for $\mathbf{x}\in(\mathbb{R}^d)^{k-1}$. Then, $h$ is continuous for all $\mathbf{x}\in(\mathbb{R}^d)^{k-1}$ with $r(\mathbf{K}(\mathbf{x}))>0$, where $r(\cdot)$ denotes the inradius. 
	\end{lemma}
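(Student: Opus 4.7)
The plan is to establish sequential continuity by proving $\limsup_{n\to\infty} h(\mathbf{x}_n) \leq h(\mathbf{x})$ and $\liminf_{n\to\infty} h(\mathbf{x}_n) \geq h(\mathbf{x})$ separately for every sequence $\mathbf{x}_n \to \mathbf{x}$ with $r(\mathbf{K}(\mathbf{x})) > 0$. Throughout I would use that each $g_{m_i}$ is upper semicontinuous on $\mathbb{R}^d$ (continuous on the closed set $K_{m_i}$, zero off $K_{m_i}$, with downward jumps across $\partial K_{m_i}$), a fact already invoked in the proof of Lemma \ref{lemma:geom1 untere varianz}. For the upper bound, the joint map $F(\mathbf{v},y) = f_{\mathbf{v}}(y)$ is upper semicontinuous on $(\mathbb{R}^d)^{k-1} \times \mathbb{R}^d$ because each summand $(\mathbf{v},y) \mapsto g_{m_i}(y-v_i)$ is u.s.c.\ as a composition of a u.s.c.\ function with a continuous shift. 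A maximizer $y_n^\ast$ of $y \mapsto f_{\mathbf{x}_n}(y)$ exists (u.s.c.\ on bounded effective support) and the $y_n^\ast$ all lie in a fixed bounded set because $\mathbf{x}_n$ is bounded. Passing to a convergent subsequence $y_{n_j}^\ast \to y^\ast$ and invoking upper semicontinuity of $F$ yields $\limsup_j h(\mathbf{x}_{n_j}) = \limsup_j F(\mathbf{x}_{n_j},y_{n_j}^\ast) \leq F(\mathbf{x},y^\ast) \leq h(\mathbf{x})$, and a standard subsequence argument gives the claim for the full sequence.

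For the lower bound, I would fix a maximizer $y^\ast$ of $f_{\mathbf{x}}$ and set $S = \{i \in \{1,\dots,k\} : y^\ast \in K_{m_i}+x_i\}$ (with the convention $v_k = x_k$), so that indices $i \notin S$ contribute $0$ to $f_{\mathbf{x}}(y^\ast)$. Using $r(\mathbf{K}(\mathbf{x})) > 0$, pick $p \in \mathrm{int}(\mathbf{K}(\mathbf{x}))$; then $p$ lies in the interior of every $K_{m_i}+x_i$. For $\epsilon \in (0,1)$ let $y_\epsilon = (1-\epsilon)y^\ast + \epsilon p$. For $i \in S$, the point $y_\epsilon - x_i$ is a convex combination of a point of $K_{m_i}$ with an interior point of $K_{m_i}$, hence lies in $\mathrm{int}(K_{m_i})$; concavity of $g_{m_i}$ on $K_{m_i}$ gives $g_{m_i}(y_\epsilon - x_i) \geq (1-\epsilon)\, g_{m_i}(y^\ast - x_i)$. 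For $i \notin S$, smallness of $\epsilon$ keeps $y_\epsilon - x_i$ outside the closed set $K_{m_i}$, so $g_{m_i}(y_\epsilon - x_i) = 0$. Summing yields $f_{\mathbf{x}}(y_\epsilon) \geq (1-\epsilon)\,h(\mathbf{x})$. Now for $\mathbf{x}_n \to \mathbf{x}$, the openness of $\mathrm{int}(K_{m_i})$ implies $y_\epsilon - v_{n,i} \in K_{m_i}$ for $n$ large and $i \in S$, and continuity of $g_{m_i}$ on $K_{m_i}$ gives $g_{m_i}(y_\epsilon - v_{n,i}) \to g_{m_i}(y_\epsilon - x_i)$; for $i \notin S$ the non-negativity $g_{m_i}(y_\epsilon - v_{n,i}) \geq 0$ suffices. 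Taking liminf, $\liminf_n h(\mathbf{x}_n) \geq \liminf_n f_{\mathbf{x}_n}(y_\epsilon) \geq f_{\mathbf{x}}(y_\epsilon) \geq (1-\epsilon)\,h(\mathbf{x})$, and sending $\epsilon \to 0$ concludes.

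The main obstacle is the lower bound: the global maximizer $y^\ast$ need not lie in $\mathbf{K}(\mathbf{x})$, and even when $y^\ast \in K_{m_i}+x_i$ it may sit on the boundary, where the u.s.c.\ downward jumps of $g_{m_i}$ can destroy direct convergence as $v_{n,i} \to x_i$. The positive-inradius hypothesis is exactly what allows the convex-combination trick with an interior point $p$ to regularize $y^\ast$: concavity controls the value loss by a factor $(1-\epsilon)$ while pushing the argument of every relevant $g_{m_i}$ into the open interior of $K_{m_i}$, where continuity makes the perturbation argument work.
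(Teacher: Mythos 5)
Your proof is correct and follows essentially the same route as the paper: the upper bound via upper semicontinuity of $(\mathbf{v},y)\mapsto f_{\mathbf{v}}(y)$ along a convergent subsequence of maximizers, and the lower bound via a near-maximizer placed where every $g_{m_i}$ is continuous. Your convex-combination construction $y_\varepsilon=(1-\varepsilon)y^\ast+\varepsilon p$ with the concavity estimate is a welcome explicit realisation of the point $y_\tau$ whose existence the paper only asserts from $r(\mathbf{K}(\mathbf{x}))>0$, continuity of $g_{m_i}|_{K_{m_i}}$ and convexity.
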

	\begin{proof}
		For $\mathbf{\widetilde{x}}\in(\mathbb{R}^d)^{k-1}$ with $r(\mathbf{K}(\mathbf{\widetilde{x}}))>0$ let $y_{\max}(\mathbf{\widetilde{x}})$ be a point where the maximum of $f_{\mathbf{\widetilde{x}}}$ is attained.
		For $n\in\mathbb{N}$ let $\mathbf{x}_n\in(\mathbb{R}^d)^{k-1}$ with $\mathbf{x}_n\to\mathbf{x}$ as $n\to\infty$ and let $(y_{\max}(\mathbf{x}_{n_k}))_{k\in\mathbb{N}}$ be a sequence which converges to some $\tilde{y}$. Then, since $g_{m_i}$ is upper semi-continuous for $i\in\{1,\dots,k\}$, we have
		\begin{align*}
			\limsup\limits_{k\to\infty}f_{\mathbf{x}_{n_k}}(y_{\max}(\mathbf{x}_{n_k}))\leq f_{\mathbf{x}}(\tilde{y})\leq f_{\mathbf{x}}(y_{\max}(\mathbf{x}))=h(\mathbf{x}).
		\end{align*}
		Since $(y_{\max}(\mathbf{x}_{n}))_{n\in\mathbb{N}}$ is bounded, every subsequence of $(y_{\max}(\mathbf{x}_{n}))_{n\in\mathbb{N}}$ has a convergent subsequence. Thus,
		\begin{align*}
			\limsup\limits_{n\to\infty}h(\mathbf{x}_n)=\limsup\limits_{n\to\infty}f_{\mathbf{x}_n}(y_{\max}(\mathbf{x}_n))\leq h(\mathbf{x}).
		\end{align*}
		
		For the reverse direction we use that for any $\tau>0$ there exists some $y_{\tau}\in\mathbb{R}^d$ with $y_{\tau}\notin \partial(K_{m_i}+x_i)$ for $i\in\{1,\dots,k\}$ and such that $f_{\mathbf{x}}(y_{\tau})\geq f_{\mathbf{x}}(y_{\max}(\mathbf{x}))-\tau=h(\mathbf{x})-\tau$ as $r(\mathbf{K}(\mathbf{x}))>0$ and since $g_{m_i}|_{K_{m_i}}$ is continuous and $K_{m_i}$ is convex for all $i\in\{1,\dots,k\}$. Due to the continuity of $g_{m_i}$ in $y_{\tau}-x_i$ for $i\in\{1,\dots,k\}$, we get
		\begin{align*}
			\liminf_{n\to\infty}h(\mathbf{x}_n)\geq \liminf_{n\to\infty}f_{\mathbf{x}_n}(y_{\tau})=f_{\mathbf{x}}(y_{\tau})\geq h(\mathbf{x})-\tau
		\end{align*}
		for any $\tau>0$ and hence $\liminf_{n\to\infty}h(\mathbf{x}_n)\geq h(\mathbf{x})$. Altogether, $\lim\limits_{n\to\infty}h(\mathbf{x}_n)=h(\mathbf{x})$ for any $\mathbf{x}\in(\mathbb{R}^d)^{k-1}$ with $r(\mathbf{K}(\mathbf{x}))>0$.
	\end{proof}
	
	\begin{lemma}\label{lemma:geometric lemma}
		Let $k\in\mathbb{N}$ with $k\geq 2$, $x_k\in\mathbb{R}^d$ and $m_1,\dots,m_k\in \MM$ be such that $g_{m_i}$ is strictly concave on $K_{m_i}$ and $\frac{u}{k}<\max_{x\in\mathbb{R}^d}g_{m_i}(x)\leq\frac{u}{k-1}$ for $i\in\{1,\dots,k\}$. Then, for $t\in\{1,\dots,d\}$,
		\begin{align*}
			\lambda_d^{k-1}(\{\mathbf{x}\in(\mathbb{R}^d)^{k-1}:V_t(\widetilde{Z}_u(\mathbf{x}))\in(0,R)\})>0
		\end{align*}
		for all $R>0$.
	\end{lemma}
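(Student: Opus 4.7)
The plan is to exhibit a nonempty open set in $(\mathbb{R}^d)^{k-1}$ on which $V_t(\widetilde{Z}_u(\cdot))\in(0,R)$; since nonempty open sets have positive $\lambda_d^{k-1}$-measure, this suffices. The construction proceeds in three stages: first produce a configuration with $V_t(\widetilde{Z}_u)>0$, then deform it continuously toward a degenerate configuration so that $V_t$ drops to $0$, and finally thicken a suitable intermediate configuration into an open neighborhood using Lemma \ref{lemma:h continuous} (continuity of $h$) and Lemma \ref{lemma:geom1 untere varianz} (upper semi-continuity of $V_t(\widetilde{Z}_u(\cdot))$).

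By strict concavity, each $g_{m_i}$ has a unique maximizer $y_i^*\in K_{m_i}$. Set $y^* := y_k^*+x_k$ and $x_i^* := y^* - y_i^*$ for $i=1,\dots,k-1$, so that every $g_{m_i}(\,\cdot-x_i^*)$ attains its maximum at the common point $y^*$. The assumption $\max g_{m_i}>u/k$ then yields $f_{\mathbf{x}^*}(y^*) = \sum_{i=1}^k \max g_{m_i} > u$. After an arbitrarily small joint perturbation we may assume $y^*\in\mathrm{int}\,\mathbf{K}(\mathbf{x}^*)$ and $r(\mathbf{K}(\mathbf{x}^*))>0$; continuity of $f_{\mathbf{x}^*}$ on $\mathrm{int}\,\mathbf{K}(\mathbf{x}^*)$ then produces an open $\mathbb{R}^d$-ball around $y^*$ contained in $\widetilde{Z}_u(\mathbf{x}^*)$, so $V_t(\widetilde{Z}_u(\mathbf{x}^*))>0$.

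Next, deform along the straight-line path $\mathbf{x}(s) = \mathbf{x}^* + s(\mathbf{x}' - \mathbf{x}^*)$, $s\in[0,1]$, where $\mathbf{x}'$ is chosen so that $\mathbf{K}(\mathbf{x}')=\emptyset$ (for instance by translating $x_1^*$ by a large vector), and let $s_0 := \sup\{s\in[0,1]:r(\mathbf{K}(\mathbf{x}(s)))>0\}$. On $[0,s_0)$, Lemma \ref{lemma:h continuous} makes $s\mapsto h(\mathbf{x}(s))$ continuous, while $\mathbf{K}(\mathbf{x}(s))$ shrinks to a set of empty interior as $s\to s_0$, forcing $V_t(\widetilde{Z}_u(\mathbf{x}(s)))\leq V_t(\mathbf{K}(\mathbf{x}(s)))\to 0$. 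Since $V_t(\widetilde{Z}_u(\mathbf{x}(0)))>0$ and $s\mapsto V_t(\widetilde{Z}_u(\mathbf{x}(s)))$ is continuous at any $s$ with $h(\mathbf{x}(s))>u$ and $r(\mathbf{K}(\mathbf{x}(s)))>0$ (by Hausdorff convergence of super-level sets of strictly concave functions, together with continuity of intrinsic volumes on convex bodies), there exists $\tilde s\in[0,s_0)$ with $V_t(\widetilde{Z}_u(\mathbf{x}(\tilde s)))\in(0,R)$, necessarily with $h(\mathbf{x}(\tilde s))>u$ and $r(\mathbf{K}(\mathbf{x}(\tilde s)))>0$.

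Finally, thicken the witness $\mathbf{x}(\tilde s)$. The set $\{\mathbf{y}:V_t(\widetilde{Z}_u(\mathbf{y}))<R\}$ is open by Lemma \ref{lemma:geom1 untere varianz}, and $\{\mathbf{y}:h(\mathbf{y})>u,\,r(\mathbf{K}(\mathbf{y}))>0\}$ is open by Lemma \ref{lemma:h continuous}. On the latter, $V_t(\widetilde{Z}_u(\mathbf{y}))>0$: starting from a maximizer $z_0\in\mathbf{K}(\mathbf{y})$ of $f_{\mathbf{y}}$ and any interior point $z_1\in\mathrm{int}\,\mathbf{K}(\mathbf{y})$, concavity of $f_{\mathbf{y}}$ and $f_{\mathbf{y}}\geq 0$ force $f_{\mathbf{y}}>u$ on points of $[z_0,z_1]$ close to $z_0$ (which are interior to $\mathbf{K}(\mathbf{y})$), and continuity of $f_{\mathbf{y}}$ there produces an open ball in $\widetilde{Z}_u(\mathbf{y})$. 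The intersection of the two open sets is thus a nonempty open neighborhood of $\mathbf{x}(\tilde s)$ on which $V_t(\widetilde{Z}_u(\cdot))\in(0,R)$, and so has positive $\lambda_d^{k-1}$-measure. The main obstacle is exactly the intermediate step: one has only upper semi-continuity of $V_t(\widetilde{Z}_u(\cdot))$ in general, so to extract values in $(0,R)$ one must upgrade to genuine continuity in the regime $h>u$, $r(\mathbf{K})>0$; this is where the strict concavity hypothesis is essential, as it guarantees Hausdorff continuity of the super-level sets and hence of $V_t(\widetilde{Z}_u(\cdot))$.
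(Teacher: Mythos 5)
Your overall skeleton matches the paper's: exhibit a configuration with $V_t(\widetilde{Z}_u(\cdot))>0$, degenerate it so that $V_t$ tends to $0$, use semicontinuity to land in $(0,R)$, and then thicken to an open set using the openness of $\{h>u,\,r(\mathbf{K})>0\}$ (on which $V_t>0$) intersected with $\{V_t<R\}$ (open by Lemma \ref{lemma:geom1 untere varianz}). The first and last stages are fine. The genuine gap is in the degeneration stage. Your key claim there is $V_t(\widetilde{Z}_u(\mathbf{x}(s)))\leq V_t(\mathbf{K}(\mathbf{x}(s)))\to 0$ as $s\to s_0$, but this is false for $t<d$: a convex body whose interior degenerates does not lose its lower-order intrinsic volumes. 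For example, sliding one unit cube off an identical one, at the critical parameter the intersection is a $(d-1)$-dimensional face with $V_t>0$ for all $t\leq d-1$, and $\widetilde{Z}_u$ restricted to that face can likewise keep $V_t\geq R$. Beyond $s_0$ you have no continuity (Lemmas \ref{lemma:geom1 untere varianz} and \ref{lemma:h continuous} give you nothing once $r(\mathbf{K})=0$), so $V_t$ can jump from a value $\geq R$ directly to $0$ and the intermediate value argument collapses. A generic choice such as ``translate $x_1^*$ by a large vector'' does not rule this out; the paper avoids it by \emph{constructing} the degenerate configuration so that the limit value of $V_t$ is exactly $0$: either $h$ hits the level $u$ at a point with $r(\mathbf{K})>0$, where strict concavity of $f$ forces $\widetilde{Z}_u$ to be a single point (Case 1), or, if $h>u$ throughout, a supporting-hyperplane construction makes $\mathbf{K}(\bar{\mathbf{z}})$ itself a single point (Case 2). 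Then only the \emph{upper} semicontinuity of Lemma \ref{lemma:geom1 untere varianz} is needed to find nearby configurations with $V_t\in(0,R)$.

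A secondary issue: your intermediate value step invokes full continuity of $s\mapsto V_t(\widetilde{Z}_u(\mathbf{x}(s)))$ on $\{h>u,\,r(\mathbf{K})>0\}$ ``by Hausdorff convergence of super-level sets''. This is plausible but nowhere established — the paper proves only upper semicontinuity, and deliberately arranges the argument so that upper semicontinuity at a configuration with limit value $0$ suffices. If you want to keep your path argument you would need to both prove this continuity (including the lower bound, which requires showing that interior points of $\widetilde{Z}_u(\mathbf{x})$ with $f_{\mathbf{x}}>u$ persist under perturbation) and, more importantly, replace the endpoint of your path by one of the paper's two controlled degenerations.
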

	\begin{proof}
		As in Lemma \ref{lemma:h continuous}  let $h:(\mathbb{R}^d)^{k-1}\to\mathbb{R}_{\geq 0}$ be defined by $h(\mathbf{x})=\max_{y\in \mathbb{R}^d}f_{\mathbf{x}}(y)$ for $\mathbf{x}\in(\mathbb{R}^d)^{k-1}$ and let $y_{\max}(\mathbf{x}) $ be a point where the maximum of $f_{\mathbf{x}}$ is attained. Note at first that $r(\widetilde{Z}_u(\mathbf{x}))>0$ for all $\mathbf{x}\in(\mathbb{R}^d)^{k-1}$ with $h(\mathbf{x})>u$ and $r(\mathbf{K}(\mathbf{x}))>0$ since $f_{\mathbf{x}}|_{\mathbf{K}(\mathbf{x})}$ is continuous by the continuity of $g_{m_i}|_{K_{m_i}}$ for $i\in\{1,\dots,k\}$ and because of $y_{\max}(\mathbf{x})\in\mathbf{K}(\mathbf{x})$ in this case as $\max_{x\in\mathbb{R}^d}g_{m_i}(x)\leq\frac{u}{k-1}$ for $i\in\{1,\dots,k\}$.
		
		Moreover, the space of all $\mathbf{x}\in(\mathbb{R}^d)^{k-1}$ satisfying $r(\mathbf{K}(\mathbf{x}))>0$ is connected for the following reason. 
		Let $c_i$ denote the centre of a ball of maximal radius inscribed in $K_{m_i}$ for $i\in\{1,\dots,k\}$ and let $\mathbf{v}=(v_1,\dots,v_{k-1})\in(\mathbb{R}^d)^{k-1}$ be such that $v_i+c_i=x_k+c_k$ for $i\in\{1,\dots,k-1\}$. Note that by the choice of $c_i$ this especially implies $r(\mathbf{K}(\mathbf{v}))>0$. Now, for each $\mathbf{x}=(x_1,\dots,x_{k-1})\in(\mathbb{R}^d)^{k-1}$ with $r(\mathbf{K}(\mathbf{x}))>0$ there exists a path from $\mathbf{x}$ to $\mathbf{v}$ such that $r(\mathbf{K}(\mathbf{w}))>0$ for all $\mathbf{w}=(w_1,\dots,w_{k-1})\in(\mathbb{R}^d)^{k-1}$ on the path by the convexity of $K_{m_1},\dots,K_{m_k}$. For example, such a path could be constructed in the following way. In a first step, $(x_1,\dots,x_{k-1})$ is shifted simultaneously by some $w_1\in\mathbb{R}^d$ to $\mathbf{w}_1=(x_1+w_1,\dots,x_{k-1}+w_1)\in(\mathbb{R}^d)^{k-1}$ such that $r(\mathbf{K}(\mathbf{w}))>0$ for all $\mathbf{w}\in(\mathbb{R}^d)^{k-1}$ on the path and $x_k+c_k\in \mathrm{int}(\mathbf{K}(\mathbf{w}_1))$ at the end. Now, iteratively for each $i\in\{1,\dots,k-1\}$ we can take as path the direct line to $v_i$. By the convexity of $K_{m_1},\dots, K_{m_k}$ it is guaranteed that $x_k+c_k\in \mathrm{int}(\mathbf{K}(\mathbf{w}))$ and hence $r(\mathbf{K}(\mathbf{w}))>0$ for all $\mathbf{w}\in(\mathbb{R}^d)^{k-1}$ on the path throughout the whole process, which shows that the space of all $\mathbf{x}\in(\mathbb{R}^d)^{k-1}$ satisfying $r(\mathbf{K}(\mathbf{x}))>0$ is connected.

		In the following we consider two cases.
		If there exists $\mathbf{x}\in(\mathbb{R}^d)^{k-1}$ with $h(\mathbf{x})\leq u$ and $r(\mathbf{K}(\mathbf{x}))>0$,
		we use that there exists $\mathbf{\bar{x}}\in(\mathbb{R}^d)^{k-1}$ satisfying $h(\mathbf{\bar{x}})>u$ and $r(\mathbf{K}(\mathbf{\bar{x}}))>0$ because of $\max_{x\in\mathbb{R}^d}g_{m_i}(x)>\frac{u}{k}$ for $i\in\{1,\dots,k\}$. Then, by the continuity of $h$ from Lemma \ref{lemma:h continuous} and the fact that the space of all $\mathbf{x}\in(\mathbb{R}^d)^{k-1}$ satisfying $r(\mathbf{K}(\mathbf{x}))>0$ is connected, there exist $\mathbf{\bar{z}}\in\mathbb{R}^d$ and a sequence $(\mathbf{\bar{z}}_n)_{n\in\mathbb{N}}$ with $\mathbf{\bar{z}}_n\to \mathbf{\bar{z}}$ as $n\to\infty$, $h(\mathbf{\bar{z}}_n)>u$, $h(\mathbf{\bar{z}})=u$, $r(\mathbf{K}(\mathbf{\bar{z}}_n))>0$ for all $n\in\mathbb{N}$ and $r(\mathbf{K}(\mathbf{\bar{z}}))>0$.
		Additionally, since $g_{m_i}$ is strictly concave on $K_{m_i}$ for $i\in\{1,\dots,k\}$, $f_{\mathbf{v}}$ is strictly concave on $\mathbf{K}(\mathbf{v})$ for any $\mathbf{v}\in(\mathbb{R}^d)^{k-1}$. Thus, there is at most one point in $\widetilde{Z}_u(\mathbf{\bar{z}})$, hence $V_t(\widetilde{Z}_u(\mathbf{\bar{z}}))=0$. As discussed in the first paragraph, note that the condition $h(\mathbf{\bar{z}}_n)>u$ for the sequence $(\mathbf{\bar{z}}_n)_{n\in\mathbb{N}}$ guarantees that $r(\widetilde{Z}_u(\mathbf{\bar{z}}_n))>0$ since  $r(\mathbf{K}(\mathbf{\bar{z}}_n))>0$ and hence, $V_t(\widetilde{Z}_u(\mathbf{\bar{z}}_n))>0$. 
		By Lemma \ref{lemma:geom1 untere varianz} it holds $0\leq \limsup_{n\to\infty}V_t(\widetilde{Z}_u(\mathbf{\bar{z}}_n))\leq V_t(\widetilde{Z}_u(\mathbf{\bar{z}}))=0$. Hence, for any $R>0$ there exists $n_R\in\mathbb{N}$ such that $0<V_t(\widetilde{Z}_u(\mathbf{\bar{z}}_{n_R}))\leq \frac{R}{2}$. By Lemma \ref{lemma:geom1 untere varianz} and Lemma \ref{lemma:h continuous}, there exists $\delta>0$ such that $h(\mathbf{x})>u$, $r(\mathbf{K}(\mathbf{x}))>0$ and $V_t(\widetilde{Z}_u(\mathbf{x}))<R $ for all $\mathbf{x}\in(\mathbb{R}^d)^{k-1}$ with $\lVert \mathbf{x}-\mathbf{\bar{z}}_{n_R}\rVert<\delta$ and thus $V_t(\widetilde{Z}_u(\mathbf{x}))\in(0,R) $ for all $\mathbf{x}\in(\mathbb{R}^d)^{k-1}$ with $\lVert \mathbf{x}-\mathbf{\bar{z}}_{n_R}\rVert<\delta$.
		This provides
		\begin{align*}
			&\lambda_d^{k-1}(\{\mathbf{x}\in(\mathbb{R}^d)^{k-1}:V_t(\widetilde{Z}_u(\mathbf{x}))\in(0,R)\})
			\geq  \lambda_d^{k-1}(\{\mathbf{x}\in(\mathbb{R}^d)^{k-1}:\lVert \mathbf{x}-\mathbf{\bar{z}}_{n_R}\rVert<\delta\})>0
		\end{align*}
		for any $R>0$.
		
		If $h(\mathbf{x})>u$ for all $\mathbf{x}\in(\mathbb{R}^d)^{k-1}$ with $r(\mathbf{K}(\mathbf{x}))>0$, it holds $V_t(\widetilde{Z}_u(\mathbf{x}))>0$ for any $\mathbf{x}\in(\mathbb{R}^d)^{k-1}$ with $r(\mathbf{K}(\mathbf{x}))>0$ by the arguments from the first paragraph.
		Since $\hat{K}_k$ is a compact convex set, for any $x\in\mathbb{R}^d\backslash\hat{K}_k$ there exists $y_k\in\hat{K}_k$ with $\lVert y_k-x\rVert=\max_{y\in \hat{K}_k}\lVert y-x\rVert$. Hence,
		the hyperplane $H$ through $y_k$, which is orthogonal to $y_k-x$ divides $\mathbb{R}^d$ in two half spaces $H^+$ and $H^-$, where $H^+$ denotes the half space that fulfils $\hat{K}_k\subseteq H^+$, and we have $H\cap \hat{K}_k=\{y_k\}$. 
		By choosing $\mathbf{\bar{z}}=(\bar{z}_1,\dots,\bar{z}_{k-1})\in(\mathbb{R}^d)^{k-1}$ such that $y_k\in K_{m_i}+\bar{z}_i$ and $K_{m_i}+\bar{z}_i\subseteq H^-$ for $i\in\{1,\dots,k-1\}$, we obtain $\mathbf{K}(\mathbf{\bar{z}})=\{y_k\}$.
		Clearly, there exists a sequence $(\mathbf{\bar{z}}_n)_{n\in\mathbb{N}}$ with $\mathbf{\bar{z}}_n\to\mathbf{\bar{z}}$ as $n\to\infty$ and $r(\mathbf{K}(\mathbf{\bar{z}}_n))>0$ for all $n\in\mathbb{N}$. Similarly to the first case, by Lemma \ref{lemma:geom1 untere varianz},  it holds $0\leq \limsup_{n\to\infty}V_t(\widetilde{Z}_u(\mathbf{\bar{z}}_n))\leq V_t(\widetilde{Z}_u(\mathbf{\bar{z}}))=0$, i.e.\ there exist $n_R\in\mathbb{N}$ and $\delta>0$ such that $V_t(\widetilde{Z}_u(\mathbf{x}))\in(0,R) $ for all $\mathbf{x}\in(\mathbb{R}^d)^{k-1}$ with $\lVert \mathbf{x}-\mathbf{\bar{z}}_{n_R}\rVert<\delta$. With 
		\begin{align*}
			&\lambda_d^{k-1}(\{\mathbf{x}\in(\mathbb{R}^d)^{k-1}:V_t(\widetilde{Z}_u(\mathbf{x}))\in(0,R)\})
			\geq  \lambda_d^{k-1}(\{\mathbf{x}\in(\mathbb{R}^d)^{k-1}:\lVert \mathbf{x}-\mathbf{\bar{z}}_{n_R}\rVert<\delta\})>0
		\end{align*}
		the proof of Lemma \ref{lemma:geometric lemma} is complete.
	\end{proof}

	\begin{proof}[Proof of Proposition \ref{prop: variance positive}] 
		We use Theorem \ref{thm:lowervarbound} for the difference operator of order $k$ for a suitable $k\in\mathbb{N}$. To verify the assumption in \eqref{Assumption_variance_theoretisch} we derive at first an upper bound for the integral of the expected squared difference operator of order $k+1$. To this end we use \eqref{eq:E[D^n]} and \eqref{eq:int_V_d A_1+xcapA_2}  to show that
		\begin{align*}
			&\E\idotsint \lvert D_{\hat{x}_1,\dots,\hat{x}_{k+1}}^{k+1}\varphi(Z_u\cap W_r)\rvert^2\;\lambda^{k+1}(\mathrm{d}(\hat{x}_1,\dots,\hat{x}_{k+1}))\\ &\leq \idotsint c_{2,0}4^{k+1}V_d\Big(\bigcap_{i=1}^{k+1}\hat{K}_i^{\sqrt{d}}\cap W_r^{\sqrt{d}}\Big)^2\;\lambda^{k+1}(\mathrm{d}(\hat{x}_1,\dots,\hat{x}_{k+1}))\\
			&\leq \gamma^{k+1}c_{2,0}4^{k+1}\int_{\MM^{k+1}}V_d\Big(K_{m_1}^{\sqrt{d}}\Big)\\&\hspace*{2.5cm}\times\int_{(\mathbb{R}^d)^ {k+1}}V_d\Big(\bigcap_{i=1}^{k+1}\hat{K}_i^{\sqrt{d}}\cap W_r^{\sqrt{d}}\Big)\;\mathrm{d}(x_1,\dots,x_{k+1})\;\mathbb{Q}^{k+1}(\mathrm{d}(m_1,\dots,m_{k+1}))\\
			&= \gamma^{k+1}c_{2,0}4^{k+1}\lambda_d\Big(W_r^{\sqrt{d}}\Big)\int_{\MM^{k+1}}V_d\Big(K_{m_1}^{\sqrt{d}}\Big)^2\prod_{i=2}^{k+1}V_d\Big(K_{m_i}^{\sqrt{d}}\Big)\;\mathbb{Q}^{k+1}(\mathrm{d}(m_1,\dots,m_{k+1}))\\
			&\leq c_1\lambda_d(W_r)
		\end{align*}
		for a suitable constant $c_1>0$. Note that due to the moment assumptions of up to order two, the integrals are finite. For the proof of the positivity of the variance it remains to show that 
		\begin{align*}
			&\E\idotsint \lvert D_{\hat{x}_1,\dots,\hat{x}_{k}}^{k}\varphi(Z_u\cap W_r)\rvert^2\;\lambda^{k}(\mathrm{d}(\hat{x}_1,\dots,\hat{x}_{k}))\geq c_2\lambda_d(W_r)
		\end{align*}
		for some $c_2>0$ and a suitable $k\in\mathbb{N}$. Then, assumption \eqref{Assumption_variance_theoretisch} is fulfilled and $\sigma_0>0$ by Theorem \ref{thm:lowervarbound}.
		
		Remember that $S_{K}=\{\hat{x}\in\hat{\mathbb{R}}^d:K(\hat{x})\cap K\neq \emptyset\}$ for $K\in\mathcal{K}^d$.
		For the proof of a) we choose $k=1$. Let $U_m=\{x\in\mathbb{R}^d:g_m(x)\geq u\}$ for $m\in \MM$ and $M=\{m\in \MM:\varphi(U_m)\neq 0\}$. By assumption, $\Q(M)>0$.
		Then, if $\eta(S_{K_m+x})=0$ for $m\in M$ and $x\in W_r$ such that $K_m+x\subseteq W_r$ we have with \eqref{eq:D^n},
		\begin{align*}
			D_{\hat{x}}\varphi(Z_u\cap W_r)=D_{\hat{x}}\varphi(Z_u\cap (K_m+x))=\varphi(U_m)-\varphi(\emptyset)=\varphi(U_m)
		\end{align*}
		for $\hat{x}=(x,m)$. This provides
		\begin{align}\label{eq:prop_lower_bound k=1}
			\p(	D_{\hat{x}}\varphi(Z_u\cap W_r)=\varphi(U_m))\geq \p(\eta(S_{K_m+x})=0)=e^{-\lambda(S_{K_m+x})}=e^{-\lambda(S_{K_m})}.
		\end{align}
		Let $r_0>0$ be such that $\Q(\{m\in M: R(K_m)\leq r_0\})>0$, where $R(K_m)$ denotes the circumradius of $K_m$, and $\widehat{W}_r=\{x\in W_r: \mathrm{dist}(x, \partial W_r)\geq r_0\}$. Then, $\lambda_d(\widehat{W}_r)\geq \frac{1}{2}\lambda_d(W_r)$ for $r$ large enough, which implies together with \eqref{eq:prop_lower_bound k=1} for $r$ large enough,
		\begin{align*}
			\E\int \lvert D_{\hat{x}}\varphi(Z_u\cap W_r)\rvert^2\;\lambda(\mathrm{d}\hat{x})
			&\geq \int\p( D_{\hat{x}}\varphi(Z_u\cap W_r)=  \varphi(U_m))\varphi(U_m)^2\;\lambda(\mathrm{d}\hat{x})\\
			&\geq\gamma \lambda_d(\widehat{W}_r)\int_M\mathbbm{1}\{R(K_m)\leq r_0\}e^{-\lambda(S_{K_m})}\varphi(U_m)^2\;\Q(\mathrm{d}m)\\
			&\geq c_3\lambda_d(W_r)
		\end{align*}
		for a suitable constant $c_3>0$, which completes the proof of a).
		
		For the proof of b) we use a similar proof strategy. This means, we start with point configurations, which occur with a positive probability and for which the excursion set on a specific compact convex set is the empty set. Then, we add just enough points such that it is not empty anymore and such that the corresponding difference operator is not equal to zero. 
		For the start let 
		\begin{align*}
			\widetilde{\MM}=\Big\{m\in\MM:\max_{x\in\mathbb{R}^d}g_m(x)\leq u, g_m \text{ strictly concave on }K_m\Big\}.
		\end{align*}
		Note that $\mathbb{Q}(\widetilde{\MM})>0$ by assumption. Moreover, let
		\begin{align*}
			M(v,\varepsilon)=\Big\{m\in\widetilde{\MM}:\max_{x\in\mathbb{R}^d}g_m(x)\in[v-\varepsilon,v+\varepsilon]\Big\}
		\end{align*}  
		for $v\in[0,u]$ and $\varepsilon>0$. Define the mapping $f:\widetilde{\MM}\to[0,u]$ given by $m\mapsto\max_{x\in\mathbb{R}^d}g_m(x)$ and denote by $\Q_f$ the corresponding push-forward measure. If $\Q_f$ has an atom in a point $v\in[0,u]$ with $\frac{u}{v}\in\mathbb{N}$ it holds $\Q(M(v,0))>0$. If $\Q_f$ does not have such a point, we can apply \cite[Lemma 1.19]{K21}, which provides that 
		\begin{align*}
			&\Q_f\Big(\Big\{v\in(0,u)\Big\backslash\bigcup_{i\in\mathbb{N}}\Big\{\frac{u}{i}\Big\}:\Q(M(v,\varepsilon))>0 \text{ for all }\varepsilon>0\Big\}\Big)\\&=	\Q_f(\{v\in[0,u]:\Q(M(v,\varepsilon))>0 \text{ for all }\varepsilon>0\})>0,
		\end{align*}
		where we used in the first step that $\Q_f(\{0\})=0$ since $K_m$ is non-empty for all $m\in\widetilde{\MM}$.
		Altogether, this guarantees that there either exists $v\in(0,u)$ with $\frac{u}{v}\notin\mathbb{N}$ such that $\Q(M(v,\varepsilon))>0$ for all $\varepsilon>0$ or there exists $v\in(0,u]$ with $\frac{u}{v}\in\mathbb{N}$ and $\Q(M(v,0))>0$.
		This will be used to divide the proof in different cases. 
		For the first both cases we further define $k\in\mathbb{N}$, $M\subseteq \widetilde{\MM}^{k}$ with $\Q^{k}(M)>0$ and 
		$S_{m_1,\dots,m_{k}}\subset(\mathbb{R}^d)^{k}$ for $(m_1,\dots,m_{k})\in M$ with 
		\begin{align}
			&(x_1,\dots,x_{k})\in S_{m_1,\dots,m_{k}} 	\Rightarrow (x_1+z,\dots,x_{k}+z)\in S_{m_1,\dots,m_{k}}\label{eq:prop1_Sm}
			\intertext{ for any $z\in\mathbb{R}^d$ and $x_1,\dots,x_{k}\in\mathbb{R}^d$,}
			&\lambda_d^{k-1}(\{(x_1,\dots,x_{k-1})\in(\mathbb{R}^d)^{k-1}:(x_1,\dots,x_{k-1},\mathbf{0})\in S_{m_1,\dots,m_{k}} \})>0\label{eq:prop2_Sm}
			\intertext{ in case that $k\geq 2$ and }
			&(x_1,\dots,x_{k})\notin S_{m_1,\dots,m_{k}} \text{ if }\bigcap_{i=1}^{k}\hat{K}_i=\emptyset.	\label{eq:prop3_Sm}
		\end{align}
		Let $t_{\min}=\min\{i\in\{0,\dots,d\}:\alpha_i\neq 0\}$. We consider the following three cases.
		
		\textbf{Case 1:}  There is some $v\in(0,u)$ with $\Q(M(v,\varepsilon))>0$ for any $\varepsilon>0$ and $\frac{u}{v}\notin\mathbb{N}$. 
		Then, we can choose $\varepsilon>0$ small enough such that there exists a $k\in\mathbb{N}$ with $k\geq 2$ satisfying 
		\begin{align*}
			\frac{u}{v-\varepsilon}< k<\frac{u}{v+\varepsilon}+1.
		\end{align*} 
		Now, we define $M=M(v,\varepsilon)^{k}$ and for $\hat{t}=\max\{t_{\min},1\}$,
		\begin{align*}
			S_{m_1,\dots,m_{k}}=\{(x_1,\dots,x_k)\in(\mathbb{R}^d)^{k}:V_{\hat{t}}(\widetilde{Z}_u(\mathbf{x}))\in (0,R)\text{ with } \mathbf{x}=(x_1,\dots,x_{k-1})\}
		\end{align*}
		for some $R>0$. Then, \eqref{eq:prop1_Sm}, \eqref{eq:prop2_Sm} and \eqref{eq:prop3_Sm} are fulfilled by the translation invariance of intrinsic volumes, Lemma \ref{lemma:geometric lemma} and since $V_{\hat{t}}(\emptyset)=0$.
		
		\textbf{Case 2:} We can find a $v\in(0,u]$ with $\frac{u}{v}\in\mathbb{N}$, $\Q(M(v,0))>0$ and $t_{\min}\neq 0$ or $v\neq u$. 
		Then, we define $k=\frac{u}{v}+1$, $M=M(v,0)^k$ and for $\hat{t}=\max\{t_{\min},1\}$ and  $U_m=\{x\in\mathbb{R}^d:g_m(x)\geq v\}$,
		\begin{align*}
			S_{m_1,\dots,m_{k}}=\Big\{&(x_1,\dots,x_{k})\in(\mathbb{R}^d)^{k}:V_{\hat{t}}(\widetilde{Z}_u(\mathbf{x}))\in (0,R) \text{ with }\mathbf{x}=(x_1,\dots,x_{k-1}),\\ &V_{t}\Big(\bigcap_{j=1}^{u/v} (U_{m_{i_j}}+x_{i_j})\Big)=0\text{ for } \{i_1,\dots, i_{u/v}\}\subseteq \{1,\dots,k\},\\&i_j\neq i_\ell \text{ for }j\neq \ell\text{ and all } t\geq t_{\min}\Big\},
		\end{align*}
		for some $R>0$.
		Note that since $U_m$ consists by the strict concavity of $g_m$ only of one point for all $m\in M(v,0)$, it holds 
		\begin{align*}
			\lambda_d^{u/v}\Big(\Big\{(x_{i_1},\dots,x_{i_{u/v}})\in(\mathbb{R}^d)^{u/v}: V_{t}\Big(\bigcap_{j=1}^{u/v} (U_{m_{i_j}}+x_{i_j})\Big)\neq0 \Big\}\Big)=0
		\end{align*}
		for $\{i_1,\dots, i_{u/v}\}\subseteq \{1,\dots,k\}$ and $i_j\neq i_\ell$ for $j\neq \ell$ and $t\geq t_{\min}$. Together with the translation invariance of the intrinsic volumes, properties \eqref{eq:prop1_Sm}, \eqref{eq:prop2_Sm} and \eqref{eq:prop3_Sm} are fulfilled as in Case $1$.
		
		\textbf{Case 3:} It holds $\Q(M(v,0))>0$ for $v=u$ and $t_{\min}=0$. Then, with \eqref{Assumption:phi}, we have for  $U_m=\{x\in\mathbb{R}^d:g_m(x)\geq v\}$,
		\begin{align*}
			\varphi(U_m)=\sum_{i=0}^d\alpha_iV_i(U_m)=\alpha_0\neq 0
		\end{align*}  for all $m\in M(v,0)$ since $U_m$ consists only of one point by the strict concavity of $g_m$ for all $m\in M(v,0)$. Hence, $\Q(\{m\in\MM:\varphi(U_m)\neq 0\})>0$. Thus, the assumption of part a) of the proposition is fulfilled and therefore the proof for Case $3$ is complete with the proof of a).
		
	 	It remains to show the proposition for Cases $1$ and $2$.
		To this end let $K=\bigcap_{i=1}^{k}\hat{K}_i\cap W_r$.
		By the choice of $k$ we have for $\eta(S_{K})=0$, $(x_1,\dots,x_{k})\in S_{m_1,\dots,m_{k}}$, $(m_1,\dots,m_{k})\in M$ and $I\subseteq\{1,\dots,k\}$,
		\begin{align*}
			Z_u\Big(\eta+\sum_{i\in I}\delta_{\hat{x}_i}\Big)\cap K=\emptyset
		\end{align*}
		if $\lvert I\rvert\leq k-1$ in Case $1$ and if $\lvert I\rvert\leq k-2$ in Case $2$. 
		If $\lvert I\rvert$ exceeds those thresholds, we have  
		\begin{align*}
			Z_u\Big(\eta+\sum_{i\in I}\delta_{\hat{x}_i}\Big)\cap K=X_{I}\cap K,
		\end{align*}
		which is convex by the proof of Proposition \ref{lemma E polyconvex}.
		
		Now, we consider $\hat{x}_1,\dots,\hat{x}_{k}$ with $\hat{K}_{i}\subseteq W_r$ for $i\in\{1,\dots,k\}$. Note that for $\eta(S_K)=0$ and fixed $x_k\in\mathbb{R}^d$ we have $Z_u\Big(\eta+\sum_{i=1}^k\delta_{\hat{x}_i}\Big)\cap K=\widetilde{Z}_u(\mathbf{x})$ for $\mathbf{x}=(x_1,\dots,x_{k-1})\in(\mathbb{R}^d)^{k-1}$.
 
		In the following we use assumption \eqref{Assumption:phi} to represent the geometric functional as a linear combination of intrinsic volumes. Recall that $t_{\min}=\min\{i\in\{0,\dots,d\}:\alpha_i\neq 0\}$. Without loss of generality we assume $\alpha_{t_{\min}}>0$. Then, for $t_{\min}>0$ it holds with \eqref{eq:ViVj},
		\begin{align*}
			\lvert\varphi(\widetilde{Z}_u(\mathbf{x}))\rvert&=\Big\lvert\sum_{i=0}^{d}\alpha_iV_i(\widetilde{Z}_u(\mathbf{x})\Big)\Big\rvert\\&\geq \sum_{i\in\{0,\dots,d\}:\alpha_i>0}\alpha_iV_i(\widetilde{Z}_u(\mathbf{x}))-\sum_{i\in\{0,\dots,d\}:\alpha_i<0}\lvert\alpha_i\rvert V_i(\widetilde{Z}_u(\mathbf{x}))\\
			&\geq V_{t_{\min}}(\widetilde{Z}_u(\mathbf{x})) \Big(\alpha_{t_{\min}}-\sum_{i\in\{0,\dots,d\}:\alpha_i<0}\lvert\alpha_i\rvert\frac{V_i(\widetilde{Z}_u(\mathbf{x}))}{V_{t_{\min}}(\widetilde{Z}_u(\mathbf{x}))}\Big)\\
			&\geq V_{t_{\min}}(\widetilde{Z}_u(\mathbf{x})) \Big(\alpha_{t_{\min}}-\sum_{i\in\{0,\dots,d\}:\alpha_i<0}\lvert\alpha_i\rvert C(t_{\min},i)V_{t_{\min}}(\widetilde{Z}_u(\mathbf{x}))^{(i-t_{\min})/t_{\min}}\Big)\\
			&\geq \frac{\alpha_{t_{\min}}}{2} V_{t_{\min}}(\widetilde{Z}_u(\mathbf{x})) 
		\end{align*}
		if $V_{t_{\min}}(\widetilde{Z}_u(\mathbf{x}))$ is small enough.
		Similarly, since $V_0(\widetilde{Z}_u(\mathbf{x}))=1$ as $\widetilde{Z}_u(\mathbf{x})$ is convex, we have for $t_{\min}=0$,
		\begin{align*}
			\lvert\varphi(\widetilde{Z}_u(\mathbf{x}))\rvert
			&\geq V_{t_{\min}}(\widetilde{Z}_u(\mathbf{x}))\Big(\alpha_{t_{\min}}-\sum_{i\in\{0,\dots,d\}:\alpha_i<0}\lvert\alpha_i\rvert V_i(\widetilde{Z}_u(\mathbf{x}))\Big)\\
			&\geq V_{t_{\min}}(\widetilde{Z}_u(\mathbf{x}))\Big(\alpha_{t_{\min}}-\sum_{i\in\{0,\dots,d\}:\alpha_i<0}\lvert\alpha_i\rvert C(1,i) V_1(\widetilde{Z}_u(\mathbf{x}))^i\Big)\\
			&\geq \frac{\alpha_{t_{\min}}}{2} V_{t_{\min}}(\widetilde{Z}_u(\mathbf{x})) 
		\end{align*}
		if $V_1(\widetilde{Z}_u(\mathbf{x}))$ is small enough.
		In Case 2 it additionally holds 
		\begin{align*}
			\varphi\Big(K\cap\bigcap_{j=1}^{k-1} (U_{m_{i_j}}+x_{i_j})\Big)=	\varphi\Big(\bigcap_{j=1}^{k-1} (U_{m_{i_j}}+x_{i_j})\Big)=0
		\end{align*}
		for $(x_1,\dots,x_{k})\in S_{m_1,\dots,m_{k}}$, $(m_1,\dots,m_{k})\in M$, $i_1,\dots,i_{k-1}\in\{1,\dots,k\}$ with $i_j\neq i_\ell$ for $j\neq \ell$ and $\hat{K}_{i}\subseteq W_r$ for $i\in\{1,\dots,k\}$. 	Note that we can choose the parameter $R>0$ in the definition of $S_{m_1,\dots,m_k}$ by Lemma \ref{lemma:geometric lemma} sufficiently small such that all estimations above hold while \eqref{eq:prop2_Sm} remains valid. Then, if $\eta(S_K)=0$, it holds
		\begin{align*}
			\lvert D_{\hat{x}_1,\dots,\hat{x}_{k}}^{k}\varphi(Z_u\cap K)\rvert=\Big\lvert	\varphi\Big(Z_u\Big(\eta+\sum_{i=1}^{k}\delta_{\hat{x}_i}\Big)\cap K\Big)\Big\rvert\geq 	\frac{\alpha_{t_{\min}}}{2} V_{t_{\min}}(\widetilde{Z}_u(\mathbf{x})) =: C_{\hat{x}_1,\dots,\hat{x}_{k}}>0
		\end{align*}
		for $(x_1,\dots,x_{k})\in S_{m_1,\dots,m_{k}}$ with $\hat{K}_{i}\subseteq W_r$ for $i\in\{1,\dots,k\}$ and $(m_1,\dots,m_{k})\in M$, i.e.\ we have

		\begin{align}\label{eq:prob_lower_var}
			\p( \lvert D_{\hat{x}_1,\dots,\hat{x}_{k}}^{k}\varphi(Z_u\cap K)\rvert\geq C_{\hat{x}_1,\dots,\hat{x}_{k}})
			\geq \p(\eta(S_{K})=0) \geq e^{-\lambda(S_{K_{m_1}})}>0.
		\end{align}
		
		For $\mathbf{m}=(m_1,\dots,m_{k})\in M$ let $h_{\mathbf{m}}:\mathbb{R}^d\to\mathbb{R}_{\geq 0}$ be defined as
		\begin{align*}
			h_{\mathbf{m}}(x_1,\dots,x_{k})=C_{\hat{x}_1,\dots,\hat{x}_{k}}^2\mathbbm{1}\{(x_1,\dots,x_{k})\in S_{m_1,\dots,m_{k}}\}e^{-\lambda(S_{K_{m_1}})}.
		\end{align*}
		The translation invariance of the intrinsic volumes provides
		\begin{align*}
			C_{\hat{x}_1,\dots,\hat{x}_{k}}=C_{(x_1+z,m_1),\dots,(x_{k}+z, m_{k})}
		\end{align*}	
		for any $z\in\mathbb{R}^d$. Then, by \eqref{eq:prop1_Sm}, $h_{\mathbf{m}}$ is translation invariant and with \eqref{eq:prop2_Sm} and \eqref{eq:prop3_Sm} we get
		\begin{align}\label{eq:int_lower_var}
			\int_{(\mathbb{R}^d)^{k-1}}\mathbbm{1}\Big\{K_{m_{k}}\cap\bigcap_{i=1}^{k-1}(K_{m_i}+y_i)\neq\emptyset\Big\}h_{\mathbf{m}}(y_1,\dots,y_{k-1},\mathbf{0})\;\mathrm{d}(y_1,\dots,y_{k-1})>0.
		\end{align}
		Moreover, for $\widehat{W}_{r}=\{(x_1,\dots,x_{k})\in(\mathbb{R}^d)^{k}:\hat{K}_i\subseteq W_r, i\in\{1,\dots,k\}\}$ and $y_1,\dots,y_{k-1}\in\mathbb{R}^d$ satisfying  $K_{m_{k}}\cap\bigcap_{i=1}^{k-1}(K_{m_i}+y_i)\neq\emptyset$, there exists $r_0(m_1,\dots,m_{k})>0$ only depending on $m_1,\dots,m_{k}$ such that
		\begin{align}
			&\lambda_d(\{y\in\mathbb{R}^d:(y_1+y,\dots,y_{k-1}+y,y)\in\widehat{W}_{r}\})\nonumber\\&\geq \lambda_d\Big(\Big\{y\in W_r:\mathrm{d}(y,\partial W_r)\geq R(K_{m_{k}})+2\max_{i\in\{1,\dots,k-1\}}R(K_{m_i})\Big\}\Big)\geq\frac{\lambda_d(W_r)}{2}
			\label{eq:wdach_lower_var}
		\end{align}
		for all $r\geq r_0(m_1,\dots,m_{k})$.
		Now, let $r_0>0$ be large enough such that $\Q(\{m\in M:r_0(m_1,\dots,m_{k})\leq r_0\})>0$. Then we have with \eqref{eq:D^n}, \eqref{eq:prob_lower_var}, \eqref{eq:int_lower_var}, \eqref{eq:wdach_lower_var} and the translation invariance of $h_{\mathbf{m}}$ for $r\geq r_0$,
		\allowdisplaybreaks
		\begin{align*}
			&\E\idotsint \lvert D_{\hat{x}_1,\dots,\hat{x}_{k}}^{k}\varphi(Z_u\cap W_r)\rvert^2\;\lambda^{k}(\mathrm{d}(\hat{x}_1,\dots,\hat{x}_{k}))\\
			&\geq \idotsint\p\Big(\Big\lvert D_{\hat{x}_1,\dots,\hat{x}_{k}}^{k}\varphi\Big(Z_u\cap W_r\cap \bigcap_{i=1}^{k}\hat{K}_i\Big)\Big\rvert\geq C_{\hat{x}_1,\dots,\hat{x}_{{k}}}\Big)C_{\hat{x}_1,\dots,\hat{x}_{{k}}}^2\;\lambda^{k}(\mathrm{d}(\hat{x}_1,\dots,\hat{x}_{k}))\\
			&\geq \idotsint \mathbbm{1}\{(m_1,\dots,m_{k})\in M,(x_1,\dots,x_{k})\in\widehat{W}_{r}\} h_{\mathbf{m}}(x_1,\dots,x_{k}) \;\lambda^{k}(\mathrm{d}(\hat{x}_1,\dots,\hat{x}_{k}))\\
			&\geq \gamma^{k}\int_{M}\int_{(\mathbb{R}^d)^{k}}\mathbbm{1}\{(y_1+y_{k},\dots,y_{k-1}+y_{k},y_{k})\in\widehat{W}_{r}\}\mathbbm{1}\Big\{K_{m_{k}}\cap\bigcap_{i=1}^{k-1}(K_{m_i}+y_i)\neq\emptyset\Big\} \\&\hspace*{3cm}\times h_{\mathbf{m}}(y_1,\dots,y_{k-1},\mathbf{0}) \;\mathrm{d}(y_1,\dots,y_{k})\;\mathbb{Q}^{k}(\mathrm{d}(m_1,\dots,m_{k}))\\
			&\geq \gamma^{k}\int_{M}\int_{(\mathbb{R}^d)^{k-1}}\hspace*{-0.3cm} \lambda_d(\{y\in\mathbb{R}^d:(y_1+y,\dots,y_{k-1}+y,y)\in\widehat{W}_{r}\})\mathbbm{1}\Big\{K_{m_{k}}\cap\bigcap_{i=1}^{k-1}(K_{m_i}+y_i)\neq\emptyset\Big\} \\&\hspace*{3cm}\times h_{\mathbf{m}}(y_1,\dots,y_{k-1},\mathbf{0}) \;\mathrm{d}(y_1,\dots,y_{k-1})\;\mathbb{Q}^{k}(\mathrm{d}(m_1,\dots,m_{k}))\\
			&\geq c_4\lambda_d(W_r)
		\end{align*}
		for a suitable constant $c_4>0$, which completes the proof.	
	\end{proof}
	\subsection{Central limit theorem}
	For the proof of the qualitative central limit theorem we show that $\gamma_1,\gamma_2$ and $\widetilde{\gamma}_3$ from Theorem \ref{thm:clt_theory_Wasserstein} vanish for $r\to\infty$. For the proof of the quantitative central limit theorems we show that $\gamma_1,\dots,\gamma_6$ from Theorem \ref{thm:clt_theory_Wasserstein} and Theorem \ref{thm:clt_kolmogorov_theory} are of the right order under the additional moment assumptions. To this end we mainly use \eqref{eq:E[D^n]}. 
	\begin{proof}[Proof of Theorem \ref{thm:clt}] 
		We consider $F=\frac{ \varphi(Z_u\cap W_r)-\E[ \varphi(Z_u\cap W_r)]}{\sqrt{\Var[ \varphi(Z_u\cap W_r)]}}$. For this standardised random variable it holds that
		\begin{align*}
			D_{\hat{x}_1,\dots,\hat{x}_n}^nF=\frac{D_{\hat{x}_1,\dots,\hat{x}_n}^n \varphi(Z_u\cap W_r)}{\sqrt{\Var[ \varphi(Z_u\cap W_r)]}}.
		\end{align*}
		To show $a)$ we start with bounding $\gamma_1,\gamma_2$ and $\widetilde{\gamma}_3$ from Theorem \ref{thm:clt_theory_Wasserstein}. 
		Let $f\big(K_1,\dots,K_t\big)=V_d\big(\bigcap_{i=1}^tK_i^{\sqrt{d}}\big)$ for $K_i\in\mathcal{K}^d$, $i\in\{1,\dots,t\}$ and $t\in\mathbb{N}$. Then, for $\hat{x}_1=(x_1,m_1),\hat{x}_2=(x_2,m_2),\hat{x}_3=(x_3,m_3)\in\mathbb{R}^d\times\MM$ we have with \eqref{eq:E[D^n]} for suitable constants $c_1,c_2>0$,
		\begin{align*}
			\E[(D_{\hat{x}_1}F)^2(D_{\hat{x}_2}F)^2]&\leq \frac{c_1}{\Var[ \varphi(Z_u\cap W_r)]^2}f(\hat{K}_1, W_r)^2f(\hat{K}_2, W_r)^2
		\end{align*}
		and
		\begin{align*}
			&\E[(D_{\hat{x}_1,\hat{x}_3}^2F)^2(D_{\hat{x}_2,\hat{x}_3}^2F)^2]\leq \frac{c_2}{\Var[ \varphi(Z_u\cap W_r)]^2}f(\hat{K}_1, \hat{K}_3, W_r)^2f(\hat{K}_2, \hat{K}_3, W_r)^2.
		\end{align*}
		Since with Equation \eqref{eq:int_V_d A_1+xcapA_2} it holds for $j\in\{1,2\}$ that
		\begin{align*}
			\int_{\mathbb{R}^d}f(\hat{K}_j, W_r)f(\hat{K}_j,\hat{K}_3, W_r)\;\mathrm{d}x_j&\leq\nonumber f(K_{m_j})\int_{\mathbb{R}^d}f(\hat{K}_j,\hat{K}_3, W_r)\;\mathrm{d}x_j= f(K_{m_j})^2f(\hat{K}_3, W_r),
		\end{align*}
		we get for $r\geq 1$,
		\begin{align*}
			\gamma_1^2&\leq \frac{4\sqrt{c_1c_2}\gamma^3}{\Var[ \varphi(Z_u\cap W_r)]^2}\int_{\MM^3}f(K_{m_1})^2f(K_{m_2})^2\int_{\mathbb{R}^d}f(\hat{K}_3, W_r)^2\;\mathrm{d}x_3\;\mathbb{Q}^3(\mathrm{d}(m_1,m_2,m_3))\\
			&\leq \frac{4\sqrt{c_1c_2}\gamma^3}{\Var[\varphi(Z_u\cap W_r)]^2} \int_{\MM^3}f(K_{m_1})^2f(K_{m_2})^2f(K_{m_3})^2f(W_r)\;\mathbb{Q}^3(\mathrm{d}(m_1,m_2,m_3))\\
			&\leq  \frac{4\sqrt{c_1c_2}f(W_r)\gamma^3}{\Var[ \varphi(Z_u\cap W_r)]^2}\Big(\int_{\MM}f(K_{m})^2\;\mathbb{Q}(\mathrm{d}m)\Big)^3.
		\end{align*}
		Note that the integral exists because of the second order moment assumption in \eqref{Assumption_fourth_moment}. By Theorem \ref{thm:variance_limit} it holds $\Var[\varphi(Z_u\cap W_r)]\geq \frac{\sigma_0}{2}V_d(W_r)= \frac{\sigma_0r^d}{2}V_d(W)$ for $r$ large enough. Then, since $f(W_r)\leq r^df(W)$ for $r\geq 1$ we get for $r$ large enough,
		\begin{align}\label{gamma1}
			\gamma_1^2\leq \frac{c_3}{V_d(W_r)}
		\end{align}
		for a suitable constant $c_3>0$ and hence $\gamma_1\to 0$ as $r\to\infty$.
		Analogously, we get for $\gamma_2$,
		\begin{align}\label{gamma2}
			\gamma_2^2\leq  \frac{c_2f(W_r)\gamma^3}{\Var[ \varphi(Z_u\cap W_r)]^2}\Big(\int_{\MM}f(K_{m})^2\;\mathbb{Q}(\mathrm{d}m)\Big)^3
			\leq \frac{c_4}{V_d(W_r)}
		\end{align}
		for a suitable constant $c_4>0$ and $r$ large enough. Thus, $\gamma_2\to 0$ as $r\to\infty$.
		For the bound of $\widetilde{\gamma}_3$ we use that
		\begin{align}\label{eq:gamma_tilde}
			\E[\lvert D_{\hat{x}_1}F\rvert ^3]\leq \frac{c_5}{\Var[ \varphi(Z_u\cap W_r)]^{3/2}}f(\hat{K}_1, W_r)^3
		\end{align}
		from \eqref{eq:E[D^n]} for some $c_5>0$. Since additionally $\E[\lvert D_{\hat{x}_1}F\rvert^{3/2}]^{2/3}\leq \E[\lvert D_{\hat{x}_1}F\rvert^{3}]^{1/3}$ by Jensen's inequality, we have for fixed $t>0$,
		\begin{align*}
			\widetilde{\gamma}_3&\leq \gamma \int_{\MM}\int_{\mathbb{R}^d}\mathbbm{1}\{f(K_{m_1})\leq t\}	\E[\lvert D_{\hat{x}_1}F\rvert ^3] \;\mathrm{d}x_1\;\mathbb{Q}(\mathrm{d}m_1)\\&\quad+2\gamma\int_{\MM}\int_{\mathbb{R}^d}\mathbbm{1}\{f(K_{m_1})> t\}	\E[\lvert D_{\hat{x}_1}F\rvert ^3]^{2/3} \;\mathrm{d}x_1\;\mathbb{Q}(\mathrm{d}m_1)=:I_1(t,r)+I_2(t,r).
		\end{align*}
		For $I_1$ we have with \eqref{eq:gamma_tilde},
		\begin{align*}
			I_1(t,r)&\leq \frac{c_5}{\Var[ \varphi(Z_u\cap W_r)]^{3/2}}\int_\MM\mathbbm{1}\{f(K_{m_1})\leq t\}f(K_{m_1})^2\int_{\mathbb{R}^d}f(\hat{K}_1, W_r)\;\mathrm{d}x_1\;\mathbb{Q}(\mathrm{d}m_1)\\
			&\leq \frac{tc_5f(W_r)}{\Var[ \varphi(Z_u\cap W_r)]^{3/2}}\int_\MM\mathbbm{1}\{f(K_{m_1})\leq t\}f(K_{m_1})^2\;\mathbb{Q}(\mathrm{d}m_1)\leq \frac{c_6t}{\sqrt{V_d(W_r)}}
		\end{align*} 
		for a suitable constant $c_6>0$ if $r$ is large enough and hence, $I_1(t,r)\to 0$ as $r\to\infty$ for any $t>0$. Similarly, we get for $I_2$,
		\begin{align*}
			I_2(t,r)&\leq \frac{2c_5^{2/3}}{\Var[ \varphi(Z_u\cap W_r)]}\int_\MM\mathbbm{1}\{f(K_{m_1})> t\}f(K_{m_1})\int_{\mathbb{R}^d}f(\hat{K}_1, W_r)\;\mathrm{d}x_1\;\mathbb{Q}(\mathrm{d}m_1)\\
			&= \frac{2c_5^{2/3}f(W_r)}{\Var[ \varphi(Z_u\cap W_r)]}\int_\MM\mathbbm{1}\{f(K_{m_1})> t\}f(K_{m_1})^2\;\mathbb{Q}(\mathrm{d}m_1)\\
			&\leq c_7\int_\MM\mathbbm{1}\{f(K_{m_1})> t\}f(K_{m_1})^2\;\mathbb{Q}(\mathrm{d}m_1)
		\end{align*}
		for a suitable constant $c_7>0$ if $r$ is large enough. Note that by the second order moment assumption, $\int_\MM\mathbbm{1}\{f(K_{m_1})> t\}f(K_{m_1})^2\;\mathbb{Q}(\mathrm{d}m_1)\to0$ as $t\to\infty$. This means that for any $\varepsilon>0$ we can choose $\hat{t}>0$ such that $I_2(\hat{t},r)\leq \frac{\varepsilon}{2}$ for $r$ sufficiently large. Then, for $r$ large enough such that also $I_1(\hat{t},r)\leq\frac{\varepsilon}{2}$, we have $\widetilde{\gamma}_3<\varepsilon$. Altogether, we have shown that $\gamma_1,\gamma_2$ and $\widetilde{\gamma}_3$ vanish as $r\to\infty$, which provides $a)$.
		
		For the proof of the Wasserstein distance in $b)$ it is sufficient to additionally bound $\gamma_3$ from Theorem \ref{thm:clt_theory_Wasserstein} since we have already shown in \eqref{gamma1} and \eqref{gamma2} that $\gamma_1$ and $\gamma_2$ are of the right order. To this end we use \eqref{eq:gamma_tilde}. Then, with Equation \eqref{eq:int_V_d A_1+xcapA_2} we have for $r$ large enough,
		\begin{align*}
			\gamma_3&\leq \frac{c_5\gamma}{\Var[ \varphi(Z_u\cap W_r)]^{3/2}} \int_\MM f(K_{m_1})^2\int_{\mathbb{R}^d}f(\hat{K}_1, W_r)\;\mathrm{d}x_1\;\mathbb{Q}(\mathrm{d}m_1)\\
			&= \frac{c_5\gamma f(W_r)}{\Var[ \varphi(Z_u\cap W_r)]^{3/2}} \int_\MM f(K_{m_1})^3\;\mathbb{Q}(\mathrm{d}m_1)\\
			&\leq \frac{c_8}{\sqrt{V_d(W_r)}} 
		\end{align*}
		for a suitable constant $c_8>0$. Note that the integral is finite by the moment assumption \eqref{Assumption_fourth_moment} of order three, which provides together with \eqref{gamma1} and \eqref{gamma2} the result for the Wasserstein distance.
		
		To show $c)$ we additionally bound $\gamma_4,\gamma_5$ and $\gamma_6$ from Theorem \ref{thm:clt_kolmogorov_theory}.
		From \cite[Lemma 4.2]{LPS16} we know that
		\begin{align}\label{eq:E[f4]}
			\E[F^4]\leq\max\Big\{256\Big(\int(\E[(D_{\hat{x}}F)^4]^{1/2})\;\lambda(\mathrm{d}\hat{x})\Big)^2,4\int\E[(D_{\hat{x}}F)^4]\;\lambda(\mathrm{d}\hat{x})+2\Big\}.
		\end{align}
		The bound
		\begin{align}
			\E[\lvert D_{\hat{x}_1}F\rvert ^4]\leq \frac{c_9}{\Var[ \varphi(Z_u\cap W_r)]^2}f(\hat{K}_1, W_r)^4\label{eq:E[dx4]}
		\end{align}
		from Lemma \ref{lemma:diff_operator} for some constant $c_9>0$ provides
		\begin{align*}
			&\Big(\gamma\int_\MM\int_{\mathbb{R}^d}\E[(D_{\hat{x}}F)^4]^{1/2}\;\mathrm{d}x_1\;\mathbb{Q}(\mathrm{d}m_1)\Big)^2\nonumber\\&\leq\frac{ c_9\gamma^2}{\Var[ \varphi(Z_u\cap W_r)]^2}\Big(\int_\MM f(K_{m_1})\int_{\mathbb{R}^d}f(\hat{K}_1, W_r)\;\mathrm{d}x_1\;\mathbb{Q}(\mathrm{d}m_1)\Big)^2\nonumber\\
			&= \frac{ c_9\gamma^2 f(W_r)^2}{\Var[ \varphi(Z_u\cap W_r)]^2}\Big(\int_\MM f(K_{m_1})^2\;\mathbb{Q}(\mathrm{d}m_1)\Big)^2
		\end{align*}
		and
		\begin{align}
			&\gamma\int_\MM\int_{\mathbb{R}^d}\E[(D_{\hat{x}}F)^4]\;\mathrm{d}x_1\;\mathbb{Q}(\mathrm{d}m_1)\nonumber\\&\leq \frac{c_9\gamma}{\Var[ \varphi(Z_u\cap W_r)]^2}	\int_\MM f(K_{m_1})^3\int_{\mathbb{R}^d}f(\hat{K}_1, W_r)\;\mathrm{d}x_1\;\mathbb{Q}(\mathrm{d}m_1)\nonumber\\
			&= \frac{ c_9\gamma f( W_r)}{\Var[ \varphi(Z_u\cap W_r)]^2}	\int_\MM f(K_{m_1})^4\;\mathbb{Q}(\mathrm{d}m_1).\label{eq:E[dxf4]}
		\end{align}
		Since the variance and $f(W_r)$ grow with order $r^d$, this bound leads with the fourth moment assumption and Equation \eqref{eq:E[f4]} to $\E[F^4]\leq c_{10}$ for a suitable constant $c_{10}>0$ and $r$ large enough. Hence, with Equation \eqref{eq:E[D^n]} we have
		\begin{align*}
			\gamma_4&\leq\frac{\gamma c_{10}^{1/4}}{2}\int_\MM\int_{\mathbb{R}^d}(\E[(D_{\hat{x}_1}F)^4])^{3/4}\;\mathrm{d}x_1\;\mathbb{Q}(\mathrm{d}m_1)\\
			&\leq \frac{c_{11}}{\Var[ \varphi(Z_u\cap W_r)]^{3/2}} \int_\MM f(K_{m_1})^2\int_{\mathbb{R}^d}f(\hat{K}_1, W_r)\;\mathrm{d}x_1\;\mathbb{Q}(\mathrm{d}m_1)\\
			&=\frac{c_{11}f(W_r)}{\Var[ \varphi(Z_u\cap W_r)]^{3/2}}\int_\MM f(K_{m_1})^3\;\mathbb{Q}(\mathrm{d}m_1)\leq \frac{c_{12}}{\sqrt{V_d(W_r)}}
		\end{align*}
		for suitable constants $c_{11},c_{12}>0$ and $r$ large enough.
		By \eqref{eq:E[dxf4]} we get for $\gamma_5$,
		\begin{align*}
			\gamma_5^2\leq \frac{c_{13}}{V_d(W_r)}
		\end{align*}
		for $r$ large enough and some constant $c_{13}>0$. Finally, for $\gamma_6$ we need the estimate
		\begin{align*}
			\E[(D_{\hat{x}_1,\hat{x}_2}^2F)^4]&\leq \frac{c_{14}}{\Var[ \varphi(Z_u\cap W_r)]^2}f(\hat{K}_1, \hat{K}_2, W_r)^4\\&\leq  \frac{c_{14}}{\Var[ \varphi(Z_u\cap W_r)]^2}f(\hat{K}_1, \hat{K}_2, W_r)^2f(\hat{K}_1, W_r)^2.
		\end{align*}
		for some $c_{14}>0$.
		This provides with \eqref{eq:E[dx4]} and $c_{15}=6\sqrt{c_7c_{14}}+3c_{14}$,
		\begin{align*}
			\gamma_6^2&\leq \frac{c_{15}\gamma^2}{\Var[ \varphi(Z_u\cap W_r)]^2} \int_{\MM^2}\int_{(\mathbb{R}^d)^2}f(\hat{K}_1, W_r)^2f(\hat{K}_1,\hat{K}_2, W_r)^2\;\mathrm{d}(x_1,x_2)\;\mathbb{Q}^2(\mathrm{d}(m_1,m_2))\\
			&\leq \frac{c_{15}\gamma^2}{\Var[ \varphi(Z_u\cap W_r)]^2}\int_{\MM^2}f(K_{m_1})^2f(K_{m_2})\int_{(\mathbb{R}^d)^2}f(\hat{K}_1,\hat{K}_2, W_r)\;\mathrm{d}(x_1,x_2)\;\mathbb{Q}^2(\mathrm{d}(m_1,m_2))\\
			&\leq \frac{c_{15}\gamma^2f(W_r)}{\Var[ \varphi(Z_u\cap W_r)]^2}\int_{\MM}f(K_{m_1})^3\;\mathbb{Q}(\mathrm{d}m_1)\int_{\MM}f(K_{m_2})^2\;\mathbb{Q}(\mathrm{d}m_2)\\
			&\leq \frac{c_{16}}{V_d(W_r)}
		\end{align*}
		for a suitable constant $c_{16}>0$ and $r$ large enough.
		Altogether, the estimates for $\gamma_1,\dots,\gamma_6$ complete the proof of the quantitative central limit theorem in Kolmogorov distance.
	\end{proof}
	\appendix
	\section{Appendix: A new lower variance bound for Poisson functionals}
	\label{app:lower_variance}
	This appendix gives the proof of Theorem \ref{thm:lowervarbound}, the generalised reverse Poincar\'e inequality. For $k=1$ this theorem was already proven in \cite[Theorem 1.1]{ST22}. The proof for general $k$ is similarly to the proof for $k=1$ based on the Fock space representation.
	\begin{proof}[Proof of Theorem \ref{thm:lowervarbound}.]
		Let $x_1,\dots,x_k\in\mathbb{X}$.
		The Fock space representation \eqref{eq:fockspace} provides for $f_m(x_1,\dots,x_m)=\frac{1}{m!}\E[D_{x_1,\dots,x_m}^kF]$ and $f_0=\E[F]$,
		\begin{align*}
			\E[F^2]&=\E[F]^2+\sum_{m=1}^{\infty}m!\lVert f_m\rVert_m^2=\sum_{m=0}^{\infty}m!\lVert f_m\rVert_m^2
		\end{align*}
		and for $j\in\mathbb{N}$ together with Fubini's theorem and the monotone convergence theorem,
		\begin{align*}
			&\E\int_{\mathbb{X}^j}(D_{x_1,\dots,x_{j}}^{j}F)^2\;\mu^{j}(\mathrm{d}(x_1,\dots,x_{j}))\\&=\int_{\mathbb{X}^j} \sum_{m=0}^{\infty}\frac{1}{m!}\int_{\mathbb{X}^m}\E[D^m_{y_1,\dots,y_m}(D^j_{x_1,\dots,x_j}F)]^2\;\mu^{m}(\mathrm{d}(y_1,\dots,y_{m}))\;\mu^{j}(\mathrm{d}(x_1,\dots,x_{j}))\\&
			= \sum_{m=0}^{\infty}\frac{1}{m!}\int_{\mathbb{X}^{m+j}} \E[D^{m+j}_{x_1,\dots,x_{m+j}}F]^2\;\mu^{m+j}(\mathrm{d}(x_1,\dots,x_{m+j}))
			\\&=  \sum_{m=j}^{\infty}\frac{\prod_{i=0}^{j-1}(m-i)}{m!}\int_{\mathbb{X}^{m}} \E[D^{m}_{x_1,\dots,x_{m}}F]^2\;\mu^{m}(\mathrm{d}(x_1,\dots,x_{m}))
			\\&=
			\sum_{m=1}^{\infty}\prod_{i=0}^{j-1}(m-i)m!\lVert f_m\rVert_m^2.
		\end{align*}
		This means, Assumption \eqref{Assumption_variance_theoretisch} is equivalent to
		\begin{align}\label{assumption_fock_space}
			\sum_{m=1}^{\infty}\prod_{i=0}^{k-1}(m-i)m!\lVert f_m\rVert_m^2(\alpha-m+k)\geq 0.
		\end{align}
		Now, choose $c(\alpha,k)\geq\prod_{i=0}^{k-1}(m-i)(\alpha-m+k+1)=:g(m)$ for all $m\in\mathbb{N}_0$, which is possible since $g:\mathbb{N}_0\to\mathbb{R}$ is uniformly bounded in $m$ as the leading polynomial occurs with a negative sign. Then,
		\begin{align*}			&c(\alpha,k)\Var[F]-\E\int(D_{x_1,\dots,x_{k}}^{k}F)^2\;\mu^k(\mathrm{d}(x_1,\dots,x_k))\\&=\sum_{m=1}^{\infty}m!\lVert f_m\rVert_m^2\Big(c(\alpha,k)-\prod_{i=0}^{k-1}(m-i)\Big)\\&\geq \sum_{m=1}^{\infty}m!\lVert f_m\rVert_m^2\prod_{i=0}^{k-1}(m-i)(\alpha-m+k)\geq 0
		\end{align*}
		by \eqref{assumption_fock_space}, which completes the proof.
	\end{proof}
	
	\subsection*{Acknowledgements}
	The author would like to thank Matthias Schulte for drawing her attention to the field of Poisson shot noise processes and for many valuable discussions on geometric functionals of excursion sets. Additionally, the author would like to thank Matthias Lienau for some helpful comments on this paper.

\end{document}